\def\lDx#1{\langle D_x\rangle^{#1}\,}
\theoremstyle{plain}
\newtheorem{theo}{Theorem}[section]
\newtheorem{prop}[theo]{Proposition}
\newtheorem{lemm}[theo]{Lemma}
\newtheorem{defi}[theo]{Definition}
\theoremstyle{definition}
\newtheorem{rema}[theo]{Remark}
\newtheorem{nota}[theo]{Notation}
\DeclareMathOperator{\cn}{div}
\DeclareSymbolFont{pletters}{OT1}{cmr}{m}{sl}
\DeclareMathSymbol{s}{\mathalpha}{pletters}{`s}
\def\B{B }
\def\defn{\mathrel{:=}}
\def\eps{\varepsilon}
\def\eval{\arrowvert_{y=\eta}}
\def\la{\left\vert}
\def\lA{\left\Vert}
\def\le{\leq}
\def\les{\lesssim}
\def\leo{}
\def\ma{a}
\def\mez{\frac{1}{2}}
\def\partialx{\nabla}
\def\ra{\right\vert}
\def\rA{\right\Vert}
\def\tdm{\frac{3}{2}}
\def\xC{\mathbf{C}}
\def\xN{\mathbf{N}}
\def\xR{\mathbf{R}}
\def\xZ{\mathbf{Z}}
\def\cF{ \mathcal{F}}
\newcommand{\hk}{\hspace*{.15in}}
\numberwithin{equation}{section}
\title{Hadamard well-posedness of the gravity water waves system}
\author{
Quang Huy Nguyen
\address{Quang Huy Nguyen. Laboratoire de Math\'ematiques d'Orsay, UMR 8628 du CNRS, Universit\'e Paris-Sud, 91405 Orsay Cedex, France}
}
\begin{document}

%\date{\empty}

\begin{abstract}
We consider in this article the system of (pure) gravity water waves in any dimension and in fluid domains with general bottoms. The unique solvability of the problem was established by Alazard-Burq-Zuily [Invent. Math, 198 (2014), no. 1, 71--163] at a low regularity level where the initial surface is $C^{\frac{3}{2}+}$ in terms of Sobolev embeddings, which allows the existence of free surfaces with unbounded curvature. Our result states  that the solutions obtained above depend continuously on initial data in the strong topology where they are constructed. This completes a well-posedness result in the sense of Hadamard.
\end{abstract}
\thanks{The author was supported in part by Agence Nationale de la Recherche
  project  ANA\'E ANR-13-BS01-0010-03.}
\keywords{gravity water waves; Hadamard well-posedness; flow map.}
\maketitle
%%%%%%%%%%%%%%%%%%%%%%%%%

\section{Introduction}
\quad We are concerned in this paper with the local well-posedness theory for pure gravity water waves in Sobolev spaces. This problem has been considered by a great number of works, for instance  Nalimov \cite{Nalimov}, Yosihara \cite{Yosihara}, Craig \cite{Craig1985},  Wu \cite{WuInvent, WuJAMS}, Lannes \cite{LannesJAMS}, Lindblad \cite{LindbladAnnals}, Shatah-Zeng \cite{SZ3}, etc. Of course, in local theory, one of the main questions is to study well-posedness for data at optimal regularity, which is interesting in understanding the possible emergence of singularities.  This problem was recently studied by Alazard-Burq-Zuily \cite{ABZ, ABZ', ABZ4, ABZ1} and Hunter-Ifrim-Tataru \cite{HuIfTa}.  Under the Eulerian formulation, for fluid domains with general bottoms (see the precise assumptions in paragraph \ref{assu:domain} below), it was established in \cite{ABZ} the local well-posedness  in any dimension where the initial surface is
\begin{equation}\label{reg:ABZ}
\eta_0\in H^{s+\frac{1}{2}}(\xR^d)\quad s>1+\frac{d}{2}
\end{equation}
and the trace of the velocity field on the surface is in $H^{s}\hookrightarrow W^{1, \infty}$, hence is Lipschitz.  In particular, the free surface may have unbounded curvature. Note that the threshold \eqref{reg:ABZ} is $\mez$ above the scaling critical index $s_c=\mez+\frac{d}{2}$. Working in the holomorphic coordinates when $d=1$, the authors proved in \cite{HuIfTa} a well-posedness theory for initial surface $\eta_0\in H^2(\xR)$, which slightly improves the threshold \eqref{reg:ABZ} for 2D waves in the infinite depth case. Remark that however, the authors did not establish in \cite{HuIfTa} a direct uniqueness result at the end-point regularity $H^2(\xR)$. Uniqueness was proved in $H^3(\xR)$ and rough solutions in $H^2(\xR)$ were constructed as unique limits of smooth solutions in $H^3(\xR)$. Subsequently, in \cite{ABZ4} Alazard-Burq-Zuily took into account the dispersive property of water waves to prove Strichartz-type estimates and then improved a little the thresholds in \cite{ABZ} and \cite{HuIfTa} to non-Lipschitz initial velocity fields.\\
\hk  An important point in the Hadamard well-posedness is the continuity of the solution map  in the strong topology where the solution is constructed. For water waves, this fact is usually overlooked and widely believed to be true since it is rather standard in the context of quasilinear waves.  In the present paper, we would like to address this question, which turns out to be nontrivial especially for solutions at low regularities. Remark that the continuity of the solution map was indeed showed in \cite{HuIfTa} for 2D waves ($d=1$). Our main result states that the solution map is continuous at the level of regularity \eqref{reg:ABZ} in any dimension.  Let us mention also as a partial ill-posedness result, Chen-Marzuola-Spirn-Wright proved in \cite{CMSW} that the flow map is not $C^3$ if the free surface is not $C^{\frac{5}{2}+}$, in the case where surface tension effect is taken into account.
\subsection{Assumptions on the fluid domain}\label{assu:domain} We shall use the setting in \cite{ABZ} which is recalled here for reader's convenience. We work in a time-dependent fluid domain~$\Omega$
located underneath
a free surface~$\Sigma$ described by the unknown function $\eta$ and moving
in a fixed container denoted by~$\mathcal{O}$. More precisely,
$$
\Omega=\left\{\,(t,x,y)\in [0,T]\times{\mathbf{R}}^d\times\xR \, : \, (x,y) \in \Omega(t)\right\},
$$
where
$$
\Omega(t)=\left\{ (x,y)\in \mathcal{O} \, :\, y < \eta(t,x)\right\}.
$$
Assume that the fluid domain contains a fixed strip around the free surface, i.e, there exists~$h>0$ such that, for all~$t\in [0,T]$,
\begin{equation}\label{hypt}
\Omega_{h}(t)\defn \left\{ (x,y)\in {\mathbf{R}}^d\times \xR \, :\, \eta(t,x)-h < y < \eta(t,x)\right\} \subset \Omega(t).
\end{equation}
We also assume that the container~$\mathcal{O}$ (and hence the domain~$\Omega(t)$)
is connected but no regularity assumption is made on the bottom~$\Gamma\defn \partial\mathcal{O}$. In particular, $\Gamma$ can be empty, corresponding to the infinite depth case.
\subsection{The equations}
The Eulerian velocity $v: \Omega \to \xR^{d+1}$ is determined by the incompressible
and irrotational Euler equation and three boundary conditions described by the following system
\begin{equation}\label{wwi}
\left\{
\begin{aligned}
\partial_t v + (v \cdot \nabla_{x,y})v + \nabla_{x,y}P = -g e_y, \quad & \text{in}~ \Omega\\
\text{div}_{x,y} \, v =0, \quad  \quad \text{curl}_{x,y}\, v =0 \quad   &\text{in}~ \Omega\\
\partial_t \eta = \sqrt{1+ \vert \nabla_x \eta \vert^2} (v \cdot n) \quad & \text{on}~ \Sigma\\
P = 0 \quad & \text{on}~\Sigma\\
v \cdot \nu =0 \quad &\text{on}~\Gamma,
\end{aligned}
\right.
\end{equation}
where $g>0$ is the acceleration of gravity, $e_y$ is the unite vector $(x=0,y=1)$ and $P$ is the pressure. \\
There exists a velocity potential $\phi: \Omega \to \xR$
such that $v = \nabla_{x,y} \phi, $ thus $\Delta_{x,y} \phi = 0$ in $\Omega$. We introduce the trace of the potential on the surface
$$
\psi(t,x)= \phi(t,x,\eta(t,x))
$$
and the Dirichlet-Neumann operator
\begin{align*}
G(\eta) \psi &= \sqrt{1 + \vert \nabla_x \eta \vert ^2}
\Big( \frac{\partial \phi}{\partial n} \Big \arrowvert_{\Sigma}\Big)\\
&= (\partial_y \phi)(t,x,\eta(t,x)) - \nabla_x \eta(t,x) \cdot(\nabla_x \phi)(t,x,\eta(t,x)).
\end{align*}
Then (see \cite{CrSu}) the water waves system (\ref{wwi}) can be written in the Zakharov/Craig--Sulem formulation as a system of  $(\eta,\psi)$ 
\begin{equation}\label{ww}
\left\{
\begin{aligned}
\partial_t \eta &= G(\eta) \psi,\\
\partial_t \psi &= - \mez \vert \nabla_x \psi \vert^2 + \mez \frac{(\nabla_x \eta \cdot \nabla_x \psi + G(\eta)\psi)^2}{1+ \vert \nabla_x \eta \vert^2} - g \eta.
\end{aligned}
\right.
\end{equation}
Following \cite{ABZ} we shall consider the vertical and horizontal components of the velocity on the surface $\Sigma$  as unknowns which can be expressed in terms of $\eta$ and $\psi$ as
\begin{equation}\label{def:BV}
B = (v_y)\arrowvert_\Sigma = \frac{ \nabla_x \eta \cdot \nabla_x \psi + G(\eta)\psi} {1+ \vert \nabla_x \eta \vert^2},\quad V= (v_x)\arrowvert_\Sigma  =\nabla_x \psi - B \nabla_x \eta.
 \end{equation}
 Finally, recall that the Taylor coefficient 
\begin{equation}\label{def:Taylor}
a := -\frac{\partial P}{\partial y}\big\arrowvert_\Sigma
\end{equation}
 can be defined in terms of $\eta,\psi,B,V$ only (see \S 4.2 in \cite{ABZ} and \S 4.3.1 in \cite{LannesLivre}).
\subsection{Main result}
\begin{nota}Denote for $s\in \xR$,
$$Z^s=H^{s+\mez}(\xR^d)\times H^{s+\mez}(\xR^d)\times H^s(\xR^d)\times H^s(\xR^d). $$
\end{nota}
Let us recall first the local existence result proved in Theorem $1.2$, \cite{ABZ}.\\
Let $s>1+\frac{d}{2}$ and $(\eta^0,\psi^0)$ be such that
\begin{enumerate}
\item[H1:]
$ (\eta^0, \psi^0,  V^0 , B^0)\in Z^s,  $
\item[H2:] there exists $h>0$ such that $\eqref{hypt}$ holds initially,
\item[H3:] there exists  $c>0$ such that  the Taylor coefficient $a$ defined in \eqref{def:Taylor} verifying $a(0, x) \geq c,~\forall x \in \xR^d$.
\end{enumerate}
Then there exists $T>0$ such that the Cauchy problem for  system \eqref{ww} with initial data $(\eta^0, \psi^0)$  has a unique solution
$$(\eta, \psi) \in C^0([0,T], H^{s+\mez}(\xR^d) \times H^{s+\mez}(\xR^d)\big),
$$
such that
\begin{enumerate}
\item $(V,B) \in C^0\big([0,T], H^{s }(\xR^d) \times H^{s }(\xR^d)\big)$,
\item condition \eqref{hypt} holds with $h$ replaced by $h/2$ on $[0, T]$,
\item $a(t,x) \geq c/2,~\forall (t,x) \in [0,T]\times \xR^d$.
\end{enumerate}
\begin{rema}
The preceding Cauchy theory was obtained in \cite{ABZ} by proving a priori estimates, then contraction estimates for the solutions and finally, concluding by the standard method of regularizing initial data. More precisely, the contraction estimate in Theorem $5.1$, \cite{ABZ} shows that the solution map is Lipschitz continuous in the $Z^{s-1}$-topology, on bounded sets of $Z^s$. Then by interpolation, this implies the continuity of the solution map in $Z^{s'}$ (still on bounded sets of $Z^s$) for any $s'<s$ and not in $Z^s$, a priori (see \eqref{eq.lipschitz} and the argument following). The Hadamard well-posedness requires however such a continuity in the strong topology $Z^s$ where the solutions are constructed and this is our main result:
\end{rema}
\begin{theo}
\label{maintheo}
Let~$d\ge 1$,~$s>1+\frac d2$ and consider~$(\eta_{n}^0,\psi_{n}^0),~n\ge 0$ satisfying $(H1), (H2), (H3)$ uniformly in $n$ and
\begin{equation}\label{hyp}
(\eta_n^0, \psi_n^0, V_n^0, B^0_n)_{n\ge 1}~\text{converges to}~ (\eta^0_0, \psi^0_0, V^0_0, B^0_0)~\text {in}~Z^s.
\end{equation}
Then there exists~$T>0$  independent of $n\ge 0$ such that
the Cauchy problem for \eqref{ww}
with initial data $(\eta^{0}_n,\psi^{0}_n)$ has a unique solution
$$
(\eta_n,\psi_n)\in C^0\big([0,T], H^{s+\mez}({\mathbf{R}}^d)\times H^{s+\mez}({\mathbf{R}}^d)\big)
$$
and
\begin{equation}\label{conclusion}
(\eta_n, \psi_n, V_n, B_n)_{n\ge 1}~\text{converges to}~ (\eta_0, \psi_0, V_0, B_0)~\text {in}~C^0([0, T], Z^s).
\end{equation}
\end{theo}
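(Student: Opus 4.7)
The plan is to adapt the Bona--Smith regularisation scheme. Write $u_n^0 \defn (\eta_n^0, \psi_n^0, V_n^0, B_n^0)$ and let $u_n$ be the solution produced by the existence theorem recalled above. Fix a smooth Fourier multiplier $J_\eps = \chi(\eps D_x)$ with $\chi(0)=1$, cutting off frequencies above $1/\eps$, and set $u_n^{0,\eps} \defn J_\eps u_n^0$. Since $J_\eps$ is a contraction on $Z^s$ and enjoys the Bernstein-type gain $\|J_\eps u\|_{Z^{s+\beta}} \le C\eps^{-\beta}\|u\|_{Z^s}$ as well as $\|(I-J_\eps)u\|_{Z^{s-\beta}} \le C\eps^{\beta}\|u\|_{Z^s}$ for $\beta\ge 0$, the regularised data $u_n^{0,\eps}$ satisfy $(H1)$--$(H3)$ uniformly in $n,\eps$ and belong to $Z^\sigma$ for every $\sigma\ge s$. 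Applying the existence theorem of \cite{ABZ} to them, the regularised solutions $u_n^\eps$ exist on a common interval $[0,T]$---the lifespan depends only on the $Z^s$-norm and the constants in $(H2),(H3)$, all uniform---and satisfy $\|u_n^\eps\|_{C^0([0,T], Z^s)} \le M$ together with the higher-regularity bound $\|u_n^\eps\|_{C^0([0,T], Z^{s+\beta})} \le C\eps^{-\beta}$.

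The triangle inequality
\[ \|u_n - u_0\|_{C^0([0,T], Z^s)} \le \|u_n - u_n^\eps\|_{C^0([0,T], Z^s)} + \|u_n^\eps - u_0^\eps\|_{C^0([0,T], Z^s)} + \|u_0 - u_0^\eps\|_{C^0([0,T], Z^s)} \]
reduces the proof to two points. For each fixed $\eps>0$ the middle term tends to $0$ as $n\to\infty$: indeed $u_n^{0,\eps}\to u_0^{0,\eps}$ in $Z^{s+1}$ (smoothing promotes $Z^s$-convergence to $Z^{s+1}$-convergence), so the interpolation argument recalled in the Remark, applied at regularity $s+1$ (i.e.\ Lipschitz continuity of the flow in the $Z^s$-topology on bounded sets of $Z^{s+1}$), yields $\|u_n^\eps - u_0^\eps\|_{C^0([0,T], Z^s)} \to 0$. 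Everything therefore reduces to the Bona--Smith limit
\[ \lim_{\eps \to 0}\sup_{n \ge 0}\|u_n - u_n^\eps\|_{C^0([0,T], Z^s)} = 0. \]

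The contraction estimate of \cite{ABZ} gives at once $\|u_n - u_n^\eps\|_{C^0([0,T], Z^{s-1})} \le C\eps\|u_n^0\|_{Z^s}$, but a direct interpolation with the Bernstein bound fails since $u_n$ itself lies only in $Z^s$. My strategy is therefore to show that the family $(u_n^\eps)_\eps$ is Cauchy in $C^0([0,T], Z^s)$ uniformly in $n$, and to identify the limit with $u_n$ via $Z^{s-1}$-uniqueness. This requires revisiting the paradifferential symmetrisation used in \cite{ABZ} and deriving a tame energy inequality for $w_n^{\eps,\eps'} \defn u_n^\eps - u_n^{\eps'}$, whose source terms split into a Grönwall-absorbable $Z^s$-quadratic part and a residual part controlled by lower-regularity norms of $w_n^{\eps,\eps'}$ (supplied by the $Z^{s-1}$-contraction) combined with higher-regularity norms of $u_n^\eps, u_n^{\eps'}$ (supplied by the Bernstein bound). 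Combined with the crucial uniform smoothing rate
\[ \sup_{n \ge 0}\|(I - J_\eps) u_n^0\|_{Z^s} \to 0 \quad \text{as } \eps \to 0, \]
which follows from the precompactness of the convergent sequence $\{u_n^0\}_{n\ge 0}$ in $Z^s$ (controlling the initial data $\|w_n^{\eps,\eps'}(0)\|_{Z^s}$ uniformly in $n$), a Cauchy-in-$\eps$ argument closes the limit. The principal obstacle lies in deriving this tame $Z^s$ energy inequality at the borderline regularity: since the linearised water waves operator loses half a derivative, the estimate rests on the Alinhac good-unknown and paradifferential reductions of \cite{ABZ}, and demands delicate control of the commutators between $J_\eps$ and the symbols defining the good unknowns so that all residual terms balance against the Bernstein bound and the uniform smoothing rate.
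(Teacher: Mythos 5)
Your proposal is the classical Bona--Smith scheme: mollify the data with $J_\eps$, construct regularised solutions $u_n^\eps$, and close via a tame $Z^s$ energy inequality for the difference $w_n^{\eps,\eps'} = u_n^\eps - u_n^{\eps'}$ of two regularised solutions, balanced against the Bernstein gain $\|u_n^\eps\|_{Z^{s+1}}\lesssim\eps^{-1}$ and the $Z^{s-1}$ contraction. This is precisely the route that the paper flags, at the start of \S\ref{idea}, as one that ``does not seem to work in our case.'' The issue is not the overall skeleton of the triangle inequality, nor the use of precompactness to get the uniform smoothing rate $\sup_n\|(I-J_\eps)u_n^0\|_{Z^s}\to 0$ (both of which you state correctly); it is that the entire weight of the argument falls on the tame $Z^s$ estimate for a \emph{difference of two solutions}, and your write-up stops exactly there, acknowledging it as ``the principal obstacle'' without deriving it. Nothing in \cite{ABZ} supplies it: Theorem~5.1 there is a contraction estimate in $Z^{s-1}$, one full derivative below, and lifting it to a tame $Z^s$ inequality is genuinely hard. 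Concretely, differencing two solutions forces you to estimate $R(\eta_1)f_1 - R(\eta_2)f_2$ at level $H^{s-1/2}$, where $R(\eta)=G(\eta)-T_\lambda$, and to handle the cross-terms produced by the solution-dependent symmetrising symbols $q=\sqrt{a/\lambda}$, $\gamma=\sqrt{a\lambda}$ (since $W_s$, $\theta_s$ for the two solutions are built with \emph{different} symbols). None of this has the ready-made low-high bilinear structure your Gr\"onwall step needs, and establishing it would be at least as much work as the paper's whole argument.

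The paper avoids this by not comparing two solutions at top regularity at all. Following \cite{ABZ1}, it filters the \emph{solution} rather than the data: with $K_\eps = I - J_\eps$, it studies $K_\eps^2 U_n$ directly and shows (Propositions~\ref{DNest}, \ref{prop:csystem2}, \ref{prop:sym}) that after paralinearisation and symmetrisation, $K_\eps^2 U_n$ satisfies the same type of system with residuals bounded by $o(1)_{\eps\to 0}+\|K_\eps U_n\|_{Z^s}$, leading to the closed estimate \eqref{est:main1}. The lever that makes this possible is Lemma~\ref{Keps}(iii): the commutator $[K_\eps,T_a]$ of the fixed frequency cutoff with a paradifferential operator whose symbol $a$ has limited regularity gains a factor $\eps^\theta$. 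Every step of the reduction then turns into a single-solution estimate plus a small commutator, so no tame bilinear estimate for differences is ever needed, and a loss-free cutoff trick (hence $K_\eps^2$, not $K_\eps$, cf.\ \eqref{technical:K2}) closes the Gr\"onwall argument. In short: your scheme is not nonsensical, and both approaches ultimately spend their effort revisiting the paradifferential reduction of \cite{ABZ}, but the proposal as written leaves the decisive estimate unproved, and the paper chooses the solution-filtering route precisely because it converts that bilinear difficulty into commutator estimates with a free $\eps^\theta$ gain.
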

\begin{rema}
To study water waves in the case that the free surfaces are neither periodic nor decaying to zero at infinity, it was established in \cite{ABZ'} a similar Cauchy theory to the one in \cite{ABZ}, in the framework of  uniformly local Sobolev spaces (Kato sapces) $H^s_{ul}$ (see Definition $2.1$, \cite{ABZ'}) except that the obtained solution is
\[
(\eta, \psi, B, V)\in L^\infty([0, T], Z^s_{ul})\cap C^0([0, T],  Z^{s'}_{ul}),\quad\forall s'<s
\]
here, we have used the obvious notation $Z^s_{ul}$ for the uniformly local version of $Z^s$.\\
 Remark that using the method in the present paper, modulo some additional commutator estimates due to the presence of a cut-off function appearing in the definition of Kato spaces, we can obtain another version of Theorem \ref{maintheo} for this setting. As a consequence, it is easy to prove that the solution is actually continuous in time with the strong topology in space:
\[
(\eta, \psi, B, V)\in C^0([0, T],  Z^{s}_{ul}).
\]
\end{rema}
%%%%%%%%%%%%%%%
\subsection{On the proof of the main result}\label{idea}
System \eqref{ww} was reduced to the following single equation in \cite{ABZ}
\begin{equation}\label{formepara}
\partial_t u+T_V\cdot \nabla u +iT_\gamma u=F(U)
\end{equation}
where $V$ is defined in \eqref{def:BV}, $\gamma$ is a symbol of order $1/2$,~ $T_V,~T_\gamma$ are paradifferential operators (see section \ref{para:review} below); $U=(\eta, \psi, B, V)$ is the original unknown and $u$ is the new unknown obtained after performing paralinearization and symmetrization. A direct application of the well-known  Bona-Smith argument \cite{Bona} (see also \cite{Tzvetkov} for an illustration of this argument for the Burgers equation) does not seem to work in our case. According to this argument, one regularizes the data and makes use of the convergence of the corresponding regularized solutions in strong norm together with nice bounds for solution in terms of initial data. However, we do not have these in hand from \cite{ABZ} where only weak convergence is hoped to hold. We shall use an adjusted argument suggested in \cite{ABZ1}, which can be sketched as follows (to simplify the argument, let us pretend as if $U=u\in Z^s$ in \eqref{formepara}).
\begin{enumerate}
 \item Regularizing the solution $u$ itself by $K_\eps u$, where  $K_\eps$ is a multiplier cutting away the frequencies less than $1/\eps,~\eps\in (0, 1)$. Then $K_\eps u$ solves
\[
\big(\partial_t+T_V\cdot \nabla  +iT_\gamma )(K_\eps u)=K_\eps F(u)+G_\eps(u)
\]
where $G_\eps(u)$ is comprised of the commutators of $K_\eps$ with $T_V\cdot \nabla$ and $T_\gamma$.
\item Proving that, roughly speaking, the commutator of $K_\eps$ with the nonlinear function $F$ leaves a small error
\begin{equation}\label{intro:KF}
\lA K_\eps F(u)\rA_{Z^s}\le C\big( o(1)_{\eps\to 0}+\lA K_\eps u\rA_{Z^s}\big)
\end{equation}
where $C>0$ depending only on $\|u\|_{Z^s}$; similarly for $G_\eps u$. Deriving then the energy estimate
\begin{equation}\label{intro:energy}
\sup_{t\in [0, T]}\lA K_\eps u(t)\rA_{Z^s}\le C' \big(o(1)_{\eps\to 0}+\lA K_\eps u(0)\rA_{Z^s}\big),
\end{equation}
where again, $C'>0$ depending only on $\|u\|_{Z^s}$.
\item  Assume that there is a sequence $u_n(0),~n\ge 1$ converging to $u_0(0)$ in $H^s$, we estimate
\begin{align*}
&\lA u_n-u_0\rA_{C^0([0, T], Z^s)}\\\nonumber
&\le \lA (I-K_{\eps})(u_n-u_0)\rA_{C^0([0, T], Z^s)} +\lA K_{\eps}(u_n-u_0)\rA_{C^0([0, T], Z^s)}\\\nonumber
&\le \frac{C}{\eps}\lA (u_n-u_0)\rA_{C^0([0, T], Z^{s-1})}+\lA K_{\eps}u_0\rA_{C^0([0, T], Z^s)}+ \lA K_{\eps}u_n\rA_{C^0([0, T], Z^s)}.
\end{align*}
 Making use of the {\it continuity of the solution map in weak norms} in \cite{ABZ} yields
\[
\lim_{n\to\infty}\lA (u_n-u_0)\rA_{C^0([0, T], Z^{s-1})}=0.
\]
Finally, the energy estimate \eqref{intro:energy} applied with $u=u_n,~n\ge 0$ implies that
\[
\lim_{\eps\to 0}\lA K_{\eps}u_n\rA_{C^0([0, T], Z^s)}=0,~\text{uniformly in }~n\ge 0,
\]
from which one concludes $\lim_{n\to\infty}\lA u_n-u_0\rA_{C^0([0, T], Z^s)}=0$.
\end{enumerate}
The continuity of the solution map in weak norms was established in \cite{ABZ}, Theorem $5.1$. The main step will be deriving \eqref{intro:KF}. To obtain this, the idea consists in revisiting all the estimates in deriving the reduction \eqref{formepara} in \cite{ABZ} and show that in the high frequency regime, the estimates involving the highest norms always appear linearly.\\ %This implies the following fact about the reduced equation \eqref{formepara}: the high frequency part of $F(U)$ can be controlled by the high frequency part of $U$, modulo a small error (see Proposition \ref{prop:sym} below).
Finally, let us mention that in \cite{HuIfTa}ề the authors  proposed another method, namely, {\it frequency envelopes} to establish the continuous dependence of data-solution.
\section{A review of Paradifferential Calculus}\label{para:review}
\begin{defi}
1. (Littlewood-Paley decomposition) Let~$\kappa\in C^\infty_0({\mathbf{R}}^d)$ be such that
$$
\kappa(\theta)=1\quad \text{for }\la \theta\ra\le 1.1,\qquad
\kappa(\theta)=0\quad \text{for }\la\theta\ra\ge 1.9.
$$
Define
\begin{equation*}
\kappa_k(\theta)=\kappa(2^{-k}\theta)\quad\text{for }k\in \xZ,
\qquad \varphi_0=\kappa_0,\quad\text{ and }
\quad \varphi_k=\kappa_k-\kappa_{k-1} \quad\text{for }k\ge 1.
\end{equation*}
Given a temperate distribution $u$ and an integer $k$ in $\xN$, we  introduce 
\[
S_k u=\kappa_k(D_x)u,\quad \Delta_0u=S_0u,\quad \Delta_k u=S_k u-S_{k-1}u\quad \text{for}~k\ge 1.
\]
 Then we have the formal dyadic partition of the unity
$$
u=\sum_{k=0}^{\infty}\Delta_k u.
$$
2. (Zygmund spaces) For any real number $s$, the Zygmund space~$C^{s}_*({\mathbf{R}}^d)$ is defined as the
space of all the tempered distributions~$u$ satisfying
$$
\lA u\rA_{C^{s}_*}\defn \sup_q 2^{qs}\lA \Delta_q u\rA_{L^\infty}<+\infty.
$$
3. (H\"older spaces) For~$k\in\xN$, we denote by $W^{k,\infty}({\mathbf{R}}^d)$ the usual Sobolev spaces.
For $\rho= k + \sigma$ with $k\in \xN$ and $\sigma \in (0,1)$, $W^{\rho,\infty}({\mathbf{R}}^d)$
denotes the space of all function $u\in W^{k, \infty}(\xR^d)$ such that all the $k^{th}$ derivatives of $u$ are  $\sigma$-H\"older continuous on $\xR^d$.
\end{defi}
Next, we review the notations and basic results of the Bony paradifferential calculus (see \cite{Bony,MePise}). Here, we follow the presentation of M\'etivier in \cite{MePise} (see also \cite{ABZ}).
\begin{defi}
1. (Symbols) Given~$\rho\in [0, \infty)$ and~$m\in\xR$,~$\Gamma_{\rho}^{m}({\mathbf{R}}^d)$ denotes the space of
locally bounded functions~$a(x,\xi)$
on~${\mathbf{R}}^d\times({\mathbf{R}}^d\setminus 0)$,
which are~$C^\infty$ with respect to~$\xi$ for~$\xi\neq 0$ and
such that, for all~$\alpha\in\xN^d$ and all~$\xi\neq 0$, the function
$x\mapsto \partial_\xi^\alpha a(x,\xi)$ belongs to~$W^{\rho,\infty}({\mathbf{R}}^d)$ and there exists a constant
$C_\alpha$ such that,
\begin{equation*}%\label{para:10}
\forall\la \xi\ra\ge \mez,\quad
\lA \partial_\xi^\alpha a(\cdot,\xi)\rA_{W^{\rho,\infty}(\xR^d)}\le C_\alpha
(1+\la\xi\ra)^{m-\la\alpha\ra}.
\end{equation*}
Let $a\in \Gamma_{\rho}^{m}({\mathbf{R}}^d)$, we define the semi-norm
\begin{equation}\label{defi:norms}
M_{\rho}^{m}(a)=
\sup_{\la\alpha\ra\le 2(d+2) +\rho ~}\sup_{\la\xi\ra \ge 1/2~}
\lA (1+\la\xi\ra)^{\la\alpha\ra-m}\partial_\xi^\alpha a(\cdot,\xi)\rA_{W^{\rho,\infty}({\mathbf{R}}^d)}.
\end{equation}
2. (Paradifferential operators) Given a symbol~$a$, we define
the paradifferential operator~$T_a$ by
\begin{equation}\label{eq.para}
\widehat{T_a u}(\xi)=(2\pi)^{-d}\int \chi(\xi-\eta,\eta)\widehat{a}(\xi-\eta,\eta)\psi(\eta)\widehat{u}(\eta)
\, d\eta,
\end{equation}
where
$\widehat{a}(\theta,\xi)=\int e^{-ix\cdot\theta}a(x,\xi)\, dx$
is the Fourier transform of~$a$ with respect to the first variable;
$\chi$ and~$\psi$ are two fixed~$C^\infty$ functions such that:
\begin{equation}\label{cond.psi}
\psi(\eta)=0\quad \text{for } \la\eta\ra\le \frac{1}{5},\qquad
\psi(\eta)=1\quad \text{for }\la\eta\ra\geq \frac{1}{4},
\end{equation}
and~$\chi(\theta,\eta)$  is defined by
$
\chi(\theta,\eta)=\sum_{k=0}^{+\infty} \kappa_{k-3}(\theta) \varphi_k(\eta).
$
\end{defi}
\begin{defi}\label{defi:order}
Let~$m\in\xR$.
An operator~$T$ is said to be of  order~$\leo m$ if, for all~$\mu\in\xR$,
it is bounded from~$H^{\mu}$ to~$H^{\mu-m}$.
\end{defi}
Symbolic calculus for paradifferential operators is summarized in the following theorem.
\begin{theo}\label{theo:sc}(Symbolic calculus)
Let~$m\in\xR$ and~$\rho\in [0, \infty)$. \\
$(i)$ If~$a \in \Gamma^m_0({\mathbf{R}}^d)$, then~$T_a$ is of order~$\leo m$.
Moreover, for all~$\mu\in\xR$ there exists a constant~$K$ such that
\begin{equation}\label{esti:quant1}\lA T_a \rA_{H^{\mu}\rightarrow H^{\mu-m}}\le K M_{0}^{m}(a).
\end{equation}
$(ii)$ If~$a\in \Gamma^{m}_{\rho}({\mathbf{R}}^d), b\in \Gamma^{m'}_{\rho}({\mathbf{R}}^d)$ then
$T_a T_b -T_{a\sharp  b}$ is of order~$\leo m+m'-\rho$ where
\[
a\sharp b:=\sum_{|\alpha|<\rho}\frac{(-i)^{\alpha}}{\alpha !}\partial_{\xi}^{\alpha}a(x, \xi)\partial_x^{\alpha}b(x, \xi).
\]
Moreover, for all~$\mu\in\xR$ there exists a constant~$K$ such that
\begin{equation}\label{esti:quant2}
\lA T_a T_b  - T_{a \sharp b}   \rA_{H^{\mu}\rightarrow H^{\mu-m-m'+\rho}}%&
\le
K M_{\rho}^{m}(a)M_{0}^{m'}(b)+K M_{0}^{m}(a)M_{\rho}^{m'}(b).
\end{equation}
$(iii)$ Let~$a\in \Gamma^{m}_{\rho}({\mathbf{R}}^d)$ with $\rho \in [0, 1]$. Denote by
$(T_a)^*$ the adjoint operator of~$T_a$ and by~$\overline{a}$ the complex conjugate of~$a$. Then
$(T_a)^* -T_{\overline{a}}$ is of order~$\leo m-\rho$.
Moreover, for all~$\mu\in \xR$, there exists a constant~$K$ such that
\begin{equation}\label{esti:quant3}
\lA (T_a)^*   - T_{\overline{a}}   \rA_{H^{\mu}\rightarrow H^{\mu-m+\rho}}\le
K M_{\rho}^{m}(a).
\end{equation}
\end{theo}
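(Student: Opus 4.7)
The plan is to prove each of the three items of Theorem \ref{theo:sc} by Fourier-side computations exploiting the spectral localization built into the cutoff $\chi$. On the support of $\chi(\theta,\eta)$ one has $|\theta|\le \frac{1}{8}|\eta|$, so that if $u$ has Fourier support in a dyadic annulus $\{|\eta|\sim 2^k\}$ then $T_a u$ is spectrally supported in a comparable annulus $\{|\xi|\sim 2^k\}$. This almost-orthogonality of dyadic pieces is the backbone of all three items and will reduce each statement to a piece-by-piece $L^2$-estimate.

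For item $(i)$, I decompose $T_a = \sum_{k\ge 0} T_a^{(k)}$ according to the dyadic pieces $\varphi_k(\eta)$ of $\widehat u$, so that $T_a^{(k)} u$ is spectrally supported in $\{|\xi|\sim 2^k\}$. By Plancherel and Schur's lemma, $\|T_a^{(k)}u\|_{L^2}\lesssim 2^{km}\|\Delta_k u\|_{L^2}$ times the $L^1_\theta L^\infty_\eta$-norm of $\chi(\theta,\eta)\widehat a(\theta,\eta)(1+|\eta|)^{-m}$, and the latter is controlled by $M_0^m(a)$ because sufficiently many $\theta$-derivatives of $\chi\widehat a$ guarantee $L^1$-integrability in $\theta$ (this is the reason for the derivative count $2(d+2)+\rho$ in the seminorm \eqref{defi:norms}). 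Summing with weight $2^{2(\mu-m)k}$ yields \eqref{esti:quant1}.

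For item $(ii)$, after writing the kernel of $T_a T_b$ in frequency variables via \eqref{eq.para}, I apply Taylor's formula to $a(x,\xi+\eta)$ around $\xi$ up to order just below $\rho$; the polynomial part reassembles into $T_{a\sharp b}$ once the factors $\eta^\alpha$ are converted, via integration by parts in $x$, into $\partial_x^\alpha$ acting on $b$, while the integral remainder yields an operator of order $m+m'-\rho$ whose $L^2$-operator norm is bounded by $M_\rho^m(a)M_0^{m'}(b)+M_0^m(a)M_\rho^{m'}(b)$, proving \eqref{esti:quant2}. For item $(iii)$, the adjoint $(T_a)^*$ is computed directly from \eqref{eq.para}; interchanging $\xi$ and $\eta$ in the defining integral produces a paradifferential-type operator whose symbol differs from $\overline a$ by terms controlled through a Taylor expansion analogous to (ii), giving \eqref{esti:quant3}.

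The main obstacle I expect is the bookkeeping of the seminorms $M_\rho^m$: each Taylor remainder produces an $L^1$-norm of a kernel that must be uniformly controlled by $M_\rho^m(a)$ times dyadic factors, and the precise constant $2(d+2)+\rho$ in \eqref{defi:norms} is the smallest one for which Bernstein- and Sobolev-type arguments convert $L^\infty$-control on $\xi$-derivatives into the $L^1_\theta$-integrability required for Schur's test. The statements themselves being classical (Bony \cite{Bony}, M\'etivier \cite{MePise}), the real work lies in extracting the explicit quantitative bounds \eqref{esti:quant1}--\eqref{esti:quant3} in the form stated, which the author records here because they will be used repeatedly in the proof of Theorem \ref{maintheo}.
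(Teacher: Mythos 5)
The paper does not prove Theorem~\ref{theo:sc}: it is stated as a review of standard paradifferential calculus following Bony \cite{Bony} and M\'etivier \cite{MePise}, and is used downstream as a black box. There is thus no ``paper's own proof'' to compare against. Your sketch of items $(ii)$ and $(iii)$ follows the classical Taylor-expansion-and-remainder route of those references and is broadly on target, modulo the substantial bookkeeping you already flag.

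The sketch of item $(i)$, however, contains a genuine gap. You propose to bound each dyadic piece of $T_a$ via Schur's test, using the $L^1_\theta L^\infty_\eta$-norm of $\chi(\theta,\eta)\widehat a(\theta,\eta)(1+\la\eta\ra)^{-m}$, and you attribute the $L^1_\theta$-integrability to having ``sufficiently many $\theta$-derivatives of $\chi\widehat a$,'' tying this to the count $2(d+2)+\rho$ in \eqref{defi:norms}. This does not work. Differentiating $\widehat a$ in $\theta$ corresponds to multiplying $a$ by powers of $x$; it produces no decay of $\widehat a$ in $\theta$. Decay of $\widehat a(\cdot,\xi)$ in $\theta$ would require smoothness of $a(\cdot,\xi)$ in $x$, but $M_0^m(a)$ only controls $W^{0,\infty}=L^\infty_x$ norms of the $\xi$-derivatives, so for $\rho=0$ the function $\widehat a(\cdot,\xi)$ is merely a tempered distribution in $\theta$ and the Schur integral $\int\la\chi(\theta,\eta)\widehat a(\theta,\eta)\ra\,d\theta$ has no reason to be finite. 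The derivative count $2(d+2)+\rho$ in \eqref{defi:norms} consists of \emph{$\xi$-derivatives}, and their role in the classical proof is different: after spectral localization (which you correctly identify), one expands the symbol on each dyadic $\xi$-annulus into a Fourier series in $\xi$; the $\xi$-derivative bounds make the Fourier coefficients summable with the required dyadic weight, and each term becomes a modulated multiplication by an $L^\infty_x$-bounded function, hence trivially $L^2$-bounded (this is in essence a Calder\'on--Vaillancourt argument adapted to the frequency-localized setting). Your piece-by-piece reduction is the right first step, but the per-piece $L^2$ bound needs this $\xi$-side expansion rather than a $\theta$-side kernel bound.
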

\begin{nota}
Given two functions~$a,~u$ defined on~$\xR^d$ we denote the remainder of the Bony decomposition by
$$
R(a,u)=au-T_a u-T_u a.
$$
\end{nota}
We shall use frequently various estimates about paraproducts (see Chapter $2$ in~\cite{BCD} and Section $2$ in \cite{ABZ}) which are recalled here.
\begin{theo}\label{pproduct}
\begin{enumerate}[i)]
\item  Let~$\alpha,\beta\in \xR$. If~$\alpha+\beta>0$ then
\begin{align}
&\lA R(a,u) \rA _{H^{\alpha + \beta-\frac{d}{2}}}
\leq K \lA a \rA _{H^{\alpha}}\lA u\rA _{H^{\beta}},\label{Bony} \\
&\lA R(a,u) \rA _{H^{\alpha + \beta}} \leq K \lA a \rA _{C^{\alpha}_*}\lA u\rA _{H^{\beta}}.\label{Bony3}
\end{align}
\item Let $s_0, s_1, s_2\in \xR$. If $s_0\le s_2$ and~$s_0 < s_1 +s_2 -\frac{d}{2}$ then there exists a constant~$K$ such that
\begin{equation}\label{boundpara}
\lA T_a u\rA_{H^{s_0}}\le K \lA a\rA_{H^{s_1}}\lA u\rA_{H^{s_2}}.
\end{equation}
\item\label{parane}  Let~$m>0$ and~$s\in \xR$. Then there exists a constant~$K$ such that
\begin{equation}
\lA T_a u\rA_{H^{s-m}}\le K \lA a\rA_{C^{-m}_*}\lA u\rA_{H^{s}}.\label{niS}%\\
\end{equation}
\item \label{it.1} Let $s_0, s_1, s_2\in \xR$. If $s_0\le s_1,~s_0\le s_2,~ s_1+s_2>0,~s_0< s_1+s_2-\frac{d}{2}$ then there exists a constant~$K$ such that
\begin{equation}\label{pr}
\lA u_1 u_2 \rA_{H^{s_0}}\le K \lA u_1\rA_{H^{s_1}}\lA u_2\rA_{H^{s_2}}.
\end{equation}
\item Let~$r,\mu, \gamma\in \xR$. If $\gamma\le r,~r+\mu>0,~\gamma < r+\mu-\frac{d}{2}$, then there exists a constant~$K$ such that
$$
\lA au - T_a u\rA_{H^{\gamma}}\le K \lA a\rA_{H^{r}}\lA u\rA_{H^\mu}.
$$
\item \label{it.6} Let~$s>\frac{d}{2}$ and consider~$F\in C^\infty(\xC^N)$ satisfying~$F(0)=0$.
Then there exists a non-decreasing function~$\mathcal{F}\colon\xR_+\rightarrow\xR_+$
such that, for any~$U\in H^s({\mathbf{R}}^d)^N$,
\begin{equation}\label{esti:F(u)}
\lA F(U)\rA_{H^s}\le \mathcal{F}\bigl(\lA U\rA_{L^\infty}\bigr)\lA U\rA_{H^s}.
\end{equation}
\end{enumerate}
\end{theo}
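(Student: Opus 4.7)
The unifying strategy is to reduce every claim to dyadic Littlewood--Paley block estimates via the Bony decomposition $au = T_au + T_ua + R(a,u)$, where $T_au = \sum_{k\ge 3} S_{k-3}a\,\Delta_k u$ is spectrally supported in annuli $\{|\xi|\sim 2^k\}$ and $R(a,u) = \sum_{|j-k|\le 2}\Delta_j a\,\Delta_k u$ has each block supported in a ball $\{|\xi|\lesssim 2^{\max(j,k)}\}$. For the paraproduct bounds $(ii)$ and $(iii)$, this spectral support yields the almost-orthogonal estimate $\|\Delta_k(T_au)\|_{L^2}\lesssim \|S_{k-3}a\|_{L^\infty}\|\widetilde\Delta_k u\|_{L^2}$; for \eqref{boundpara} I would control $\|S_{k-3}a\|_{L^\infty}\lesssim \|a\|_{H^{s_1}}\sum_{j<k-3}2^{j(d/2-s_1)}c_j^a$ via Bernstein (with $(c_j^a)\in\ell^2$) and then sum against $2^{s_0 k}\|\Delta_k u\|_{L^2}$, the summability being precisely guaranteed by $s_0\le s_2$ and $s_0<s_1+s_2-d/2$; for \eqref{niS} I would instead plug in $\|\Delta_k a\|_{L^\infty}\lesssim 2^{mk}\|a\|_{C^{-m}_*}$, which absorbs the derivative loss.

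For the remainder bounds $(i)$ and the product estimate $(iv)$, I would rewrite $R(a,u) = \sum_\ell r_\ell$ by grouping blocks with $\max(j,k)=\ell$ and apply Bernstein: $\|r_\ell\|_{L^2}\lesssim 2^{\ell d/2}\|\Delta_\ell a\|_{L^2}\|\Delta_\ell u\|_{L^2}$ for \eqref{Bony}, and $\|\Delta_\ell a\|_{L^\infty}\lesssim 2^{-\ell\alpha}\|a\|_{C^\alpha_*}$ for \eqref{Bony3}. Because each $r_\ell$ is frequency-localized in a ball rather than an annulus, I would close the sum using the standard inequality $\|\sum_\ell r_\ell\|_{H^\gamma}^2 \lesssim \sum_\ell 2^{2\gamma\ell}\|r_\ell\|_{L^2}^2$ valid for $\gamma>0$; this is precisely where the hypothesis $\alpha+\beta>0$ enters, preventing low-frequency divergence. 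With $(i)$--$(iii)$ in hand, $(iv)$ follows immediately from $u_1u_2 = T_{u_1}u_2 + T_{u_2}u_1 + R(u_1,u_2)$ by applying $(ii)$ to the two paraproducts and $(i)$ combined with the Sobolev embedding $H^{s_1+s_2-d/2}\hookrightarrow H^{s_0}$ to the remainder, and $(v)$ is the rewriting $au - T_au = T_ua + R(a,u)$ combined with $(ii)$ and $(i)$.

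The only piece requiring a genuinely nonlinear step is $(vi)$, where I would use the Moser telescoping $F(U) = \sum_{k\ge 0}[F(S_{k+1}U) - F(S_kU)] = \sum_k m_k(U)\,\Delta_{k+1}U$ with $m_k(U) := \int_0^1 F'\bigl(S_kU + \tau\Delta_{k+1}U\bigr)d\tau$, using $F(0)=0$ to kill the boundary term. The hypothesis $s>d/2$ ensures $\|U\|_{L^\infty}<\infty$ by Sobolev embedding, so each $m_k$ is uniformly bounded by $\mathcal F(\|U\|_{L^\infty})$ (together with a finite number of derivatives of $F$); the $k$-th summand is spectrally supported in $\{|\xi|\lesssim 2^{k+2}\}$, and a Littlewood--Paley square-function argument, weighting each piece by $2^{2sk}$ and summing, closes the bound. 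The main technical obstacle throughout is the careful bookkeeping of summability conditions to verify that each dyadic series converges under the stated index relations; no conceptual tool beyond Bernstein, Bony's decomposition, and the Littlewood--Paley square function is required.
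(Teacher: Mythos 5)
The paper does not prove Theorem~\ref{pproduct}; it is stated as a collection of standard paradifferential facts with a pointer to Chapter~2 of~\cite{BCD} and Section~2 of~\cite{ABZ}. Your sketch follows the same Littlewood--Paley/Bony route as those references, and items $(ii)$--$(vi)$ are handled essentially correctly (the boundary term $F(S_0U)$ in the Moser telescoping needs a word, but is benign since $F(0)=0$).

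There is, however, a genuine gap in your treatment of~\eqref{Bony}. You bound $\|r_\ell\|_{L^2}\lesssim 2^{\ell d/2}\|\Delta_\ell a\|_{L^2}\|\Delta_\ell u\|_{L^2}$ and then close via ``\,$\|\sum_\ell r_\ell\|_{H^\gamma}^2\lesssim\sum_\ell 2^{2\gamma\ell}\|r_\ell\|_{L^2}^2$ for $\gamma>0$\,'', claiming this is where $\alpha+\beta>0$ enters. But for~\eqref{Bony} the target exponent is $\gamma=\alpha+\beta-\frac d2$, so your argument in fact requires $\alpha+\beta>\frac d2$, strictly stronger than the hypothesis $\alpha+\beta>0$. (For~\eqref{Bony3} the target is $H^{\alpha+\beta}$ and your identification is correct.) The standard remedy, and the route taken in~\cite{BCD}, is to estimate the remainder blocks in $L^1$ rather than $L^2$: $\|r_\ell\|_{L^1}\lesssim\|\Delta_\ell a\|_{L^2}\|\Delta_\ell u\|_{L^2}$, hence $\bigl(2^{\ell(\alpha+\beta)}\|r_\ell\|_{L^1}\bigr)_\ell\in\ell^1$ by Cauchy--Schwarz, and the ball-support lemma for $B^{\alpha+\beta}_{1,1}$ applies precisely under $\alpha+\beta>0$. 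One then concludes with the Bernstein embedding $B^{\alpha+\beta}_{1,1}\hookrightarrow H^{\alpha+\beta-\frac d2}$, which is valid for every real exponent. Your $L^2$-based shortcut silently loses the regime $0<\alpha+\beta\le\frac d2$, which is exactly the regime the paper relies on when applying~\eqref{Bony} at low regularity.
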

\begin{theo}\label{paralin}\protect{\cite[Theorem~2.92]{BCD}}(Paralinearization)
Let $F$ be a $C^{\infty}$ function on $\xR$ such that $F(0)=0$. If $u\in H^r(\xR^d)$ with $r>\frac{d}{2}$, then
\[
F(u)-T_{F'(u)}u\in H^{2r-\frac{d}{2}}.
\]
Moreover, if $\rho\in (0, r-d/2)$ and $\rho\notin\xN$ then
\[
\lA F(u)-T_{F'(u)}u\rA_{ H^{r+\rho}}\le C(\lA u\rA_{L^{\infty}})\lA u\rA_{C_*^{\rho}})\lA u\rA_{H^r}.
\]
\end{theo}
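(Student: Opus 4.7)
The plan is a dyadic decomposition of $F(u)$ via Taylor expansion, followed by matching with the paraproduct $T_{F'(u)} u$. First, using the telescoping identity with the convention $S_{-1} u = 0$ (so $F(u) = \sum_{j\ge 0}[F(S_{j+1}u)-F(S_j u)]$ since $F(0)=0$) together with the exact Taylor remainder,
\begin{equation*}
F(u) = \sum_{j\ge 0} m_j \Delta_{j+1} u, \qquad m_j \defn \int_0^1 F'\bigl(S_j u + t \Delta_{j+1} u\bigr)\,dt.
\end{equation*}
This already has the structure ``low-frequency symbol times $\Delta_{j+1} u$'', mimicking a paraproduct.

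Next, I would compare this expansion with $T_{F'(u)} u$. Modulo a harmless finite shift $N_0$ in the Littlewood--Paley index (coming from the cut-off $\psi$ in \eqref{eq.para}), one has $T_{F'(u)} u = \sum_j S_{j-N_0}[F'(u)]\,\Delta_j u$ up to lower-order contributions. Reindexing and subtracting yields
\begin{equation*}
F(u) - T_{F'(u)} u = \sum_{j\ge 1}\bigl(A_j + B_j\bigr)\Delta_j u + \text{(low-frequency terms)},
\end{equation*}
with $A_j \defn m_{j-1} - F'(S_{j-1}u) = \int_0^1 [F'(S_{j-1}u + t\Delta_j u) - F'(S_{j-1}u)]\,dt$ the Taylor remainder and $B_j \defn F'(S_{j-1}u) - S_{j-N_0}[F'(u)]$ the low-frequency discrepancy between evaluating $F'$ on the truncation and truncating $F'(u)$.

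For $A_j$, the fundamental theorem of calculus gives $\la A_j\ra \les \lA F''\rA_{L^\infty}\la \Delta_j u\ra$ (with the $L^\infty$ taken on the ball of radius $\lA u\rA_{L^\infty}$), so $\lA A_j \Delta_j u\rA_{L^2} \les \lA \Delta_j u\rA_{L^\infty}\lA \Delta_j u\rA_{L^2}$. Bernstein combined with $u\in H^r$, $r>\frac{d}{2}$, gives $\lA \Delta_j u\rA_{L^\infty} \les 2^{j(\frac{d}{2}-r)} c_j \lA u\rA_{H^r}$ with $(c_j)\in \ell^2$, hence $\lA A_j \Delta_j u\rA_{L^2} \les 2^{-j(2r-\frac{d}{2})} c_j^2 \lA u\rA_{H^r}^2$. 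An analogous Moser-type estimate handles $B_j$. Since $A_j\Delta_j u$ and $B_j\Delta_j u$ are (approximately) spectrally supported in $\{\la\xi\ra \les 2^j\}$---with the residual ``high-frequency tails'' of $F'(\cdot)$ treated via a Bony decomposition of $F'(u)$ (Theorem \ref{pproduct})---multiplying by $2^{j(2r-\frac{d}{2})}$ and summing in $\ell^2$ places the total in $H^{2r-\frac{d}{2}}$.

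The refined quantitative estimate for $\rho \in (0, r-\frac{d}{2})\setminus\xN$ follows the same scheme, except that the Bernstein bound $\lA \Delta_j u\rA_{L^\infty}\les 2^{j(\frac{d}{2}-r)}\lA u\rA_{H^r}$ is replaced by the sharper Zygmund bound $\lA \Delta_j u\rA_{L^\infty} \les 2^{-j\rho}\lA u\rA_{C_*^\rho}$. This shifts the regularity gain from $r-\frac{d}{2}$ to $\rho$, producing $H^{r+\rho}$ with the claimed trilinear dependence $C(\lA u\rA_{L^\infty})\lA u\rA_{C_*^\rho}\lA u\rA_{H^r}$ (the $\rho\notin\xN$ hypothesis is exactly what makes $C_*^\rho$ coincide with $W^{\rho,\infty}$ and allows Zygmund characterization by dyadic blocks). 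The main technical obstacle is the one hidden in ``approximately spectrally localized'': the factors $m_j$ and $F'(S_j u)$ are nonlinear in low-frequency data and not exactly Littlewood--Paley localized, so each $A_j\Delta_j u$ leaks frequencies above $2^j$. The cleanest way to close this is to further Bony-decompose $F'(u)$ into paraproducts plus a remainder, and absorb the pieces into $T_{F'(u)}u$ using the symbolic-calculus bound \eqref{esti:quant2} together with the remainder estimates \eqref{Bony}--\eqref{Bony3}.
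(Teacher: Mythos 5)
Your framework---telescoping $F(u)=\sum_j[F(S_{j+1}u)-F(S_ju)]$, extracting the Taylor averages $m_j$, and comparing term by term with $\sum_j S_{j-N_0}[F'(u)]\Delta_ju$ via the split $A_j+B_j$---is exactly the classical Meyer argument underlying \cite[Theorem~2.92]{BCD}, which the paper cites without proof. Your block-wise bounds are correct: $|A_j|\le \|F''\|_{L^\infty}|\Delta_ju|$ by the mean value theorem, and $\|B_j\|_{L^\infty}\les 2^{-j\rho}C(\|u\|_{L^\infty})\|u\|_{C_*^\rho}$ from the two pieces $F'(S_{j-1}u)-F'(u)$ and $(I-S_{j-N_0})F'(u)$ together with a Moser estimate for $F'(u)$ in $C_*^\rho$; combined with $\|\Delta_ju\|_{L^2}\les 2^{-jr}c_j\|u\|_{H^r}$, this puts $(A_j+B_j)\Delta_ju$ at the correct $\ell^2$-rate $2^{-j(r+\rho)}c_j$.

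The genuine gap is the passage from this sequence bound to the Sobolev norm of the sum, which you flag but propose to close with the wrong tool. Further Bony-decomposing $F'(u)$ inside $T_{F'(u)}u$ and invoking the symbolic calculus \eqref{esti:quant2} or the remainder bounds \eqref{Bony}--\eqref{Bony3} does not address the issue: the high-frequency leakage of $A_j\Delta_ju$ comes from the \emph{compositions} $F'(S_{j-1}u+t\Delta_ju)$ and $F'(S_{j-1}u)$, not from the paraproduct structure of $T_{F'(u)}u$, so reorganizing the latter cannot absorb the former. What is actually needed (and what \cite{BCD} uses) is the Littlewood--Paley summation lemma for \emph{non-localized} series: $\sum_j u_j\in H^{\sigma}$ for $\sigma>0$ provided that, for every multi-index $\alpha$, $\|\partial^\alpha u_j\|_{L^2}\les 2^{j(|\alpha|-\sigma)}c_j$ with $(c_j)\in\ell^2$. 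One must then verify these derivative bounds for $u_j=(A_j+B_j)\Delta_ju$, which hold because each derivative landing on $S_{j-1}u$, $\Delta_ju$, or (through the chain rule on the smooth $F'$) on $m_{j-1}$ or $F'(S_{j-1}u)$ costs at most a factor $\les 2^j$. Without this derivative-controlled summation step the argument does not close.
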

%%%%%%%%%%%%%%%%%%%
\section{Proof of Theorem \ref{maintheo}}\label{main}
Let $(\eta_n, \psi_n, B_n, V_n),~n\ge 0$ be a sequence of solutions to system \eqref{ww} on the time interval $[0, T_n]$. Hereafter, we fix an index $s>\frac{3}{2}+\frac{d}{2}$ and fix
\begin{equation}\label{rhoi}
0<\rho_0<\frac{1}{2}\min\{1, s-1-\frac{d}{2}\}.
\end{equation}
By Proposition 4.1, \cite{ABZ} we have the following {\it a priori} estimate
\begin{equation}\label{apriori}
\mathcal{M}^{(n)}_s(T)\le \mathcal{F}\bigl(\mathcal{F}(\mathcal{M}^{(n)}_{s,0})+T\mathcal{F}\bigl(\mathcal{M}^{(n)}_s(T)%+Z_r(T)
\bigr)\bigr),\quad\forall T\le T_n
\end{equation}
where $\cF:\xR^+\to \xR^+$ is a nondecreasing function and
\begin{equation}\label{defi:MsZr}
\begin{aligned}
\mathcal{M}_s^{(n)}(T)&\defn \sup_{\tau\in [0,T]}
\lA (\eta_n(\tau)), \psi_n(\tau),B_n(\tau),V_n(\tau)\rA_{H^{s+\mez}\times H^{s+\mez} \times H^s\times H^s},\\
\mathcal{M}^{(n)}_{s,0}&\defn \lA (\eta_n(0), \psi_n(0),B_n(0),V_n(0))\rA_{H^{s+\mez}\times H^{s+\mez}\times H^s\times H^s}.
\end{aligned}
\end{equation}
Since $\mathcal{M}^{(n)}_{s,0}$ is bounded, the {\it a priori} estimate \eqref{apriori} gives us the uniform existence of $T$ as claimed Theorem \ref{maintheo} and moreover $\mathcal{M}^{(n)}_s(T)$ is bounded uniformly in $n$. The rest of this paper is devoted to prove conclusion \eqref{conclusion}. Denote
$$U_n=(\eta_n, \psi_n, B_n, V_n),~n\ge 0$$
and recall also that $Z^s=H^{s+\mez}\times H^s\times H^s\times H^{s+\mez}$. Our goal is to prove that $U_n$ converges to $U_0$ in $C^0([0, T], Z^s)$. Notice that by Theorem 5.1, \cite{ABZ} and the boundedness of $M^{(n)}_s$ we have
\begin{multline}\label{eq.lipschitz}
\|(\eta_n-\eta_0, \psi_n-\psi_0, B_n-B_0, V_n-V_0)\|_{C^0([0, T], Z^{s-1})} \\
\leq C\|(\eta_n-\eta_0, \psi_n-\psi_0, B_n-B_0, V_n-V_0)\mid_{t=0}\|_{Z^{s-1}}.
\end{multline}
By interpolating the preceding contraction estimate with the obvious estimate
\[
\|(\eta_n-\eta_0, \psi_n-\psi_0, B_n-B_0, V_n-V_0)\|_{C^0([0, T], Z^{s})}\le C
\]
 for some $C$ independent of $n$, one deduces that $U_n$ converges to $U_0$ in $C^0([0, T], Z^r)$ for any $r<s$. Our goal  is to prove  that  in fact, this convergence still holds in the strong topology of the solution, i.e, in $C^0([0, T], Z^s)$. \\
Let $J_\eps$ denote the usual Friedrichs mollifiers, defined by
$J_\eps =j_\eps(D_x):= \jmath(\eps D_x)$ where~$\jmath \in C^\infty_0({\mathbf{R}}^d)$,~$0\le \jmath\le 1$ satisfying
$$
\jmath(\xi)=1\quad \text{for }\la \xi\ra\le 1,\quad \jmath(-\xi)=\jmath(\xi),\quad
\jmath(\xi)=0\quad\text{for }\la \xi\ra\ge 2.
$$
We set for $\eps >0$ small
\[
K_{\eps}=I-J_{\eps},\quad k_{\eps}=1-\jmath_{\eps}
\]
Since $I=K_{\eps}^2+2J_{\eps}-J_{\eps}^2$  we write (guided by \cite{ABZ1})
\begin{align}\label{est:main}
&\lA U_n-U_0\rA_{C^0([0, T], Z^s)}\\\nonumber
&\le 2\lA J_{\eps}(U_n-U_0)\rA_{C^0([0, T], Z^s)}+\lA J^2_{\eps}(U_n-U_0)\rA_{C^0([0, T], Z^s)} +\lA K_{\eps}^2(U_n-U_0)\rA_{C^0([0, T], Z^s)}\\\nonumber
&\le C\lA J_{\eps}(U_n-U_0)\rA_{C^0([0, T], Z^s)}+\lA K^2_{\eps}U_0\rA_{C^0([0, T], Z^s)}+ \lA K^2_{\eps}U_n\rA_{C^0([0, T], Z^s)}.
\end{align}
For fixed $\eps$, 
\begin{equation}\label{contraction:low}
\lA J_\eps(U_n-U_0)\rA_{C^0([0, T], Z^{s-1})}\le C\frac{1}{\eps}\lA (U_n-U_0)\rA_{C^0([0, T], Z^{s-1})}
\end{equation}
 tending to $0$ by means of the contraction estimate \eqref{eq.lipschitz}. The proof is complete if one can show that
\begin{equation}\label{est1}
\lim_{\eps\to 0} \lA K^2_{\eps}U_n\rA_{C^0([0, T], Z^s)}=0 \quad\text{uniformly with respect to}~ n.
\end{equation}
By Lemma \ref{Keps} $(i)$ below, the limit in \eqref{est1} holds for each $n$. However, to prove that it holds uniformly in $n$, as in the estimate \eqref{intro:energy} of the general strategy, we encounter a technical difficulty.  Let $o(1)_{\eps\to 0}$ denotes some constant of the form $\eps^\kappa$ with $\kappa>0$ independent of the solution. We shall prove that there exists a constant $M>0$ independent of $n$ such that
\begin{multline}\label{est:main1}
 \lA K^2_{\eps}U_n\rA_{C^0([0, T], Z^s)}\le Mo(1)_{\eps\to 0}+M\lA K^2_{\eps}U_n(0)\rA_{Z^s}+M\lA K_{\eps}U_0\rA_{C([0, T], Z^s)}\cr
+ M\lA J^2_{\eps}(U_n-U_0)\rA_{C([0, T], Z^s)}
\end{multline}
The unexpected term $ \lA J^2_{\eps}(U_n-U_0)\rA_{C([0, T], Z^s)}$ can be treated as in \eqref{contraction:low}, hence one concludes the proof.  For a technical reason (see the estimate \eqref{technical:K2}), there is a loss of $K_\eps$ in the third term on the right-hand side of \eqref{est:main1}, this is why we need to deal with $K_\eps^2$ in \eqref{est:main} and not simply $K_\eps$ as in the general strategy \ref{idea}.
\begin{rema}\label{important}
 From here to  Lemma \ref{Keps:gamma}, all the results  that are stated for $K_{\eps}$ also hold with $K^2_{\eps}$.
\end{rema}
 We prove in the following some useful lemmas that will be used often in our proof.
\begin{lemm}\label{Keps}
$(i)$ Let $\mu\in \xR$ and $A$ is a compact set in $H^{\mu}(\xR^d)$. Then
\[
\lim_{\eps\to 0}\lA K_{\eps}u\rA_{H^{\mu}}=0,\quad\text{uniformly on}~ A.
\]
$(ii)$ There exists a constant $C$ such that  for all  $t>0, \mu\in \xR$ and $u\in H^{\mu}(\xR^d)$, there holds
\[
\lA K_{\eps}u\rA_{H^{\mu}}\le C\eps^t\lA u\rA_{H^{\mu+t}}.
\]
$(iii)$ Let $\mu,~\mu'\in\xR$, $a\in \Gamma^m_r$ with $r\le 1$. Suppose that $m-r<\mu'-\mu$. Then there exist $\theta>0$ and $C=C(\mu)>0$ such that
\[
\lA [K_{\eps}, T_a]u\rA_{H^\mu}\le C\eps^{\theta}M^m_r(a)\|u\|_{H^{\mu'}},\quad\forall u\in H^{\mu'}(\xR^d).
\]
\end{lemm}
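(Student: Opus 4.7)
Parts~$(i)$ and~$(ii)$ are direct computations with the Fourier multiplier~$K_\eps$. For~$(ii)$, the support of~$k_\eps = 1-\jmath(\eps\,\cdot\,)$ lies in~$\{|\xi|\ge 1/\eps\}$, so~$(1+|\xi|^2)^{-t}\le C\eps^{2t}$ there, and Plancherel yields $\lA K_\eps u\rA_{H^\mu}\le C\eps^t\lA u\rA_{H^{\mu+t}}$. For~$(i)$, pointwise convergence $\lA K_\eps u\rA_{H^\mu}\to 0$ at a fixed $u$ follows from dominated convergence, since~$k_\eps(\xi)\to 0$ pointwise and~$|k_\eps|\le 1$; the uniform version on a compact set~$A$ follows from the equicontinuity of~$\{K_\eps\}$ (norms~$\le 1$ on~$H^\mu$) via a standard~$\delta$-net argument.

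For~$(iii)$ the case~$r=0$ is straightforward: by~$(ii)$, $\lA K_\eps T_a u\rA_{H^\mu}\le C\eps^\theta\lA T_a u\rA_{H^{\mu+\theta}}\le C\eps^\theta M^m_0(a)\lA u\rA_{H^{\mu+\theta+m}}$, and similarly for $\lA T_a K_\eps u\rA_{H^\mu}$; the hypothesis $m<\mu'-\mu$ leaves room to pick $\theta>0$ with $\mu+\theta+m\le\mu'$. For~$r\in(0,1]$ my plan is to combine symbolic calculus with a frequency-localization argument. First I would identify~$K_\eps$ with a paradifferential operator: unravelling~\eqref{eq.para} for the symbol $k_\eps(\xi)$, which does not depend on~$x$, gives $T_{k_\eps}u = k_\eps(D)\psi(D)u$, and since the supports of~$k_\eps$ and~$1-\psi$ are disjoint for~$\eps<4$ one gets the exact identity $K_\eps=T_{k_\eps}$. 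Applying Theorem~\ref{theo:sc}(ii) to~$[T_{k_\eps},T_a]$ with~$\rho=r$, and observing that the Bony expansion $\sum_{|\alpha|<\rho}$ reduces to its $\alpha=0$ term (since~$r\le 1$) while~$\partial_x^\alpha k_\eps=0$ for~$|\alpha|\ge 1$, we obtain
\[
k_\eps\sharp a=k_\eps\,a = a\,k_\eps = a\sharp k_\eps,
\]
so that
\[
[K_\eps,T_a] = \bigl(T_{k_\eps}T_a-T_{k_\eps\sharp a}\bigr) - \bigl(T_aT_{k_\eps}-T_{a\sharp k_\eps}\bigr)
\]
is a difference of operators of order~$m-r$. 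The seminorms~$M^0_0(k_\eps),M^0_r(k_\eps)$ are bounded uniformly in~$\eps\in(0,1)$ because the factors~$\eps^{|\alpha|}$ produced by~$\partial_\xi^\alpha k_\eps$ are absorbed by the weight~$(1+|\xi|)^{|\alpha|}$ on the support of~$\partial^\alpha\jmath(\eps\,\cdot\,)$, so we obtain the $\eps$-uniform bound
\[
\lA [K_\eps,T_a]u\rA_{H^\sigma}\le C\,M^m_r(a)\,\lA u\rA_{H^{\sigma+m-r}},\qquad\sigma\in\xR.
\]

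To convert this into a bound with an $\eps^\theta$ gain, I would observe that~$[K_\eps,T_a]u$ is Fourier-supported in~$\{|\xi|\ge c/\eps\}$ for some absolute~$c>0$: for~$K_\eps T_au$ this is immediate from $\widehat{K_\eps}=k_\eps$, while for~$T_aK_\eps u$ the cut-off~$\chi(\xi-\eta,\eta)$ in~\eqref{eq.para} forces~$|\xi|$ to lie in a dyadic neighborhood of~$|\eta|\ge 1/\eps$. For any $v$ whose spectrum lies in~$\{|\xi|\ge c/\eps\}$, $\lA v\rA_{H^\mu}\le C\eps^\theta\lA v\rA_{H^{\mu+\theta}}$ for every~$\theta>0$. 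Choosing $\theta\in(0,\mu'-\mu-m+r)$, which is positive by hypothesis, I conclude
\[
\lA[K_\eps,T_a]u\rA_{H^\mu}\le C\eps^\theta\lA[K_\eps,T_a]u\rA_{H^{\mu+\theta}}\le C\eps^\theta M^m_r(a)\lA u\rA_{H^{\mu+\theta+m-r}}\le C\eps^\theta M^m_r(a)\lA u\rA_{H^{\mu'}}.
\]
The main obstacle is that pure symbolic calculus produces the correct order~$m-r$ but no decay in~$\eps$ (the seminorms of~$k_\eps$ are~$O(1)$, not~$o(1)$); the~$\eps^\theta$ gain has to come from the high-frequency support of the commutator, and the slack in the hypothesis~$m-r<\mu'-\mu$ is exactly what enables this conversion.
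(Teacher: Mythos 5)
Parts~$(i)$ and~$(ii)$ are correct and are essentially the argument in Bona--Smith, which the paper simply cites. For~$(iii)$ your proof is correct but extracts the~$\eps^\theta$ gain by a genuinely different mechanism than the paper's. Both proofs begin by viewing~$K_\eps$ as a paradifferential operator~$T_{k_\eps}$ (you do this explicitly, the paper tacitly) and invoke Theorem~\ref{theo:sc}~$(ii)$ together with the~$x$-independence of~$k_\eps$ to conclude that~$[K_\eps,T_a]$ is of order~$m-r$ with operator norms uniform in~$\eps$; your check that~$M^0_0(k_\eps)$ and~$M^0_r(k_\eps)$ are~$O(1)$, and that the~$\sharp$-expansion terms cancel in the commutator, is exactly the right justification and is more than the paper bothers to write out. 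The divergence is in what comes next. The paper picks~$\rho\in(0,\mu'-\mu+r-m)$, bounds~$\lA[K_\eps,T_a]u\rA_{H^{\mu+\rho}}$ via the order-$(m-r)$ estimate (no~$\eps$-decay), separately bounds~$\lA[K_\eps,T_a]u\rA_{H^{\mu-t}}$ with an~$\eps^\alpha$ gain by crudely splitting the commutator into~$K_\eps T_au-T_aK_\eps u$ and applying~$(ii)$ to each piece, and finishes by interpolating in the Sobolev scale to land on~$H^\mu$. You instead observe that~$[K_\eps,T_a]u$ is Fourier-supported in~$\{\la\xi\ra\gtrsim 1/\eps\}$ (true: for~$K_\eps T_au$ trivially, for~$T_aK_\eps u$ because the cut-off~$\chi$ in~\eqref{eq.para} keeps the output frequency comparable to the input frequency) and then convert the Sobolev slack~$\mu'-\mu-(m-r)>0$ directly into~$\eps^\theta$ exactly as in~$(ii)$. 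Your route avoids the interpolation step and yields any~$\theta<\mu'-\mu-m+r$, rather than the strictly smaller exponent that the paper's interpolation produces; the paper's route, conversely, never needs to track the spectral support of the commutator's output. Both are valid proofs of the stated inequality.
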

\begin{proof}
The assertions $(i)$ and $(ii)$ were proved in Lemma 5, \cite{Bona}. We give the proof for $(iii)$. By Theorem \ref{theo:sc} $(ii)$, $[K_{\eps}, T_a]$ is of order $m-r$. Let us pick a real number $\rho$ such that
\[
0<\rho<\mu'-\mu+r-m.
\]
Then we have on the one hand,
\begin{equation}\label{Keps1}
\lA [K_{\eps}, T_a]u \rA_{H^{\mu+\rho}}\les M^m_r(a) \lA u\rA_{H^{\mu+\rho+m-r}} \les M^m_r(a) \lA u\rA_{H^{\mu'}}.
\end{equation}
On the other hand, for $t>0$ and $\alpha>0$
\begin{align*}
\lA [K_{\eps}, T_a]u \rA_{H^{\mu-t}}&\le
\lA K_{\eps}T_au \rA_{H^{\mu-t}}+\lA T_aK_{\eps}u \rA_{H^{\mu-t}}\\ \nonumber
&\lesssim \eps^{\alpha}\lA T_au \rA_{H^{\mu-t+\alpha}}+M^m_0(a)\lA K_{\eps}u \rA_{H^{\mu-t+m}}\\
&\lesssim \eps^{\alpha}M^m_0(a)\lA u \rA_{H^{\mu-t+m+\alpha}}+\eps^{\alpha}M^m_0(a)\lA u \rA_{H^{\mu-t+m+\alpha}}.
\end{align*}
Thus, if we choose $\alpha,~t>0$ such that $\mu-t+m+\alpha\le \mu'$ then
\begin{equation}\label{Keps2}
\lA [K_{\eps}, T_a]u \rA_{H^{\mu-t}}\lesssim \eps^{\alpha}M^m_0(a)\lA u\rA_{H^{\mu'}}.
\end{equation}
Combining \eqref{Keps1}, \eqref{Keps2} and the interpolation inequality in Sobolev norms we obtain the desired result.
\end{proof}
\begin{rema}
 Lemma \ref{Keps} $(iii)$ will be  frequently applied with $\mu=\mu'$ and $m<r\le 1$.
\end{rema}
\begin{lemm}\label{lemma:K2}
$(i)$ Let $s,~m\in \xR,~\alpha\in (0, 1]$ and $a\in \Gamma^m_{1+\alpha}(\xR^d)$. Then there exists $\theta>0$ such that for any $\mu\in \xR$, one can find $C_\mu>0$ such that
\[
\| [K^2_\eps\langle D_x\rangle^s, T_a]u\|_{H^\mu}\le C_\mu M^m_{1+\alpha}(a)\big(\eps^\theta \| u\|_{H^{\mu+s+m-1}}+\| K_\eps u\|_{H^{\mu+s+m-1}}\big),
\]
for all $u\in H^{s+m+\mu-1}(\xR^d)$.\\
$(ii)$ Let $s, m\in \xR$, $\alpha=\beta+\rho\in [0, 1]$ with $\beta, \rho\in [0, 1]$ and $a\in \Gamma^m_{\alpha}(\xR^d)$. Then  for any $\mu\in \xR$, one can find $C_\mu>0$ such that
\[
\| [K^2_\eps\langle D_x\rangle^s, T_a]u\|_{H^\mu}\le \eps^\rho C_\mu M^m_\alpha(a)\| u\|_{H^{\mu+s+m-\beta}},\quad \forall u\in H^{\mu+s+m-\beta}(\xR^d).
\]
\end{lemm}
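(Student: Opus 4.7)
The plan is to reduce both estimates to simpler commutators, using the identity
\[
[K^2_\eps \langle D_x\rangle^s, T_a] = \langle D_x\rangle^s [K^2_\eps, T_a] + [\langle D_x\rangle^s, T_a] K^2_\eps,
\]
valid because $K^2_\eps$ and $\langle D_x\rangle^s$ are commuting Fourier multipliers, together with the further expansion $[K^2_\eps, T_a] = K_\eps [K_\eps, T_a] + [K_\eps, T_a] K_\eps$. This breaks the problem into three pieces, each involving a single commutator $[\langle D_x\rangle^s, T_a]$ or $[K_\eps, T_a]$, for which Theorem \ref{theo:sc} (symbolic calculus) and Lemma \ref{Keps} are the designated tools.

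For part $(ii)$, with $a \in \Gamma^m_\alpha$ and $\alpha \in [0, 1]$, the Moyal symbol $\sigma \# a - a \# \sigma$ vanishes identically: only the $|\beta| = 0$ term survives in Theorem \ref{theo:sc} $(ii)$, and $\sigma a - a \sigma = 0$. Hence both $[\langle D_x\rangle^s, T_a]$ and $[K_\eps, T_a]$ are pure remainders of orders $s + m - \alpha$ and $m - \alpha$ respectively. Using $\alpha = \beta + \rho$, I would extract the factor $\eps^\rho$ from one $K_\eps$ or $K^2_\eps$ factor via Lemma \ref{Keps} $(ii)$, paying $\rho$ additional regularity. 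This converts the natural regularity $s + m - \alpha$ into $s + m - \beta$, and each of the three pieces contributes $\le C \eps^\rho M^m_\alpha(a) \|u\|_{H^{\mu+s+m-\beta}}$.

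For part $(i)$, $a \in \Gamma^m_{1+\alpha}$ now permits the $|\beta| = 1$ Moyal term $-i \sum_j \partial_{\xi_j} \sigma \cdot \partial_{x_j} a$, which does not vanish because $\sigma$ depends only on $\xi$. Consequently $[\langle D_x\rangle^s, T_a]$ has leading order $s + m - 1$ (rather than $s + m - 1 - \alpha$), and similarly $[K_\eps, T_a]$ has leading order $m - 1$. The pieces $[\langle D_x\rangle^s, T_a] K^2_\eps u$ and $[K_\eps, T_a] K_\eps u$ are controlled by $C M^m_{1+\alpha}(a) \|K_\eps u\|_{H^{\mu+s+m-1}}$ immediately, using $\|K^2_\eps w\|_{H^\nu} \le C \|K_\eps w\|_{H^\nu}$. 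The remaining piece $K_\eps [K_\eps, T_a] u$ is the delicate one; I would split $u = J_\eps u + K_\eps u$. The $K_\eps u$ part reduces to the previous estimate, while on $J_\eps u$, Lemma \ref{Keps} $(iii)$ applied with $r = 1$ gives $\|[K_\eps, T_a] J_\eps u\|_{H^{\mu+s}} \le C \eps^\theta M^m_1(a) \|J_\eps u\|_{H^{\mu+s+m-1+\delta}}$ for any small $\delta > 0$, and the low-frequency support of $J_\eps$ provides $\|J_\eps u\|_{H^{\mu+s+m-1+\delta}} \le C \eps^{-\delta} \|u\|_{H^{\mu+s+m-1}}$. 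Choosing $0 < \delta < \theta$ yields the advertised gain $\eps^{\theta - \delta}$.

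The main obstacle is exactly the above step for part $(i)$: the leading symbolic-calculus term is of full order $s + m - 1$, so no direct symbolic manipulation can produce an $\eps^\theta$ factor uniformly in $u$. The resolution is the two-channel decomposition $u = J_\eps u + K_\eps u$, which exhibits two distinct mechanisms: $K_\eps u$ absorbs a contribution of size $\|K_\eps u\|_{H^{\mu+s+m-1}}$ without any $\eps$ saving, while on $J_\eps u$ the frequency localization at scale $1/\eps$ trades Lemma \ref{Keps} $(iii)$'s $\delta$-regularity loss against an $\eps^{-\delta}$ factor, netting the required $\eps^{\theta - \delta}$ smallness.
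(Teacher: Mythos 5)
Your commutator identity $[K^2_\eps\langle D_x\rangle^s,T_a]=\langle D_x\rangle^s[K^2_\eps,T_a]+[\langle D_x\rangle^s,T_a]K^2_\eps$ with the further split $[K^2_\eps,T_a]=K_\eps[K_\eps,T_a]+[K_\eps,T_a]K_\eps$ is different from the paper's decomposition $[K^2_\eps\langle D_x\rangle^s, T_a]u=K_\eps[K_\eps \langle D_x\rangle^s, T_a]u+[K_\eps, T_a](\langle D_x\rangle^sK_\eps u)$, but it is algebraically equivalent and the handling of part $(ii)$ and of the two harmless pieces in part $(i)$ is sound.

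There is, however, a genuine gap in the treatment of the delicate piece $\langle D_x\rangle^sK_\eps[K_\eps,T_a]u$ in part $(i)$. You split $u=J_\eps u+K_\eps u$, and on $J_\eps u$ you invoke Lemma \ref{Keps} $(iii)$ with $r=1$, $\mu\to\mu+s$, $\mu'=\mu+s+m-1+\delta$, and then pay $\eps^{-\delta}$ from the frequency localization of $J_\eps$. But the exponent $\theta$ produced by Lemma \ref{Keps} $(iii)$ is \emph{not} free of $\delta$: tracing its proof, $\theta=\alpha\rho/(t+\rho)$ where the interpolation parameters must satisfy $\rho<\mu'-\mu+r-m=\delta$ and $\alpha\le\mu'-\mu+t-m=t-1+\delta$, whence $\theta<\rho<\delta$ for every admissible choice. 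So $\eps^{\theta-\delta}$ has a negative exponent and the argument collapses; in particular, "choosing $0<\delta<\theta$" is impossible because $\theta=\theta(\delta)<\delta$. The paper avoids this by a \emph{second commutation}: after applying symbolic calculus to $[K_\eps\langle D_x\rangle^s, T_a]$ (resp.\ $[K_\eps,T_a]$ in your decomposition), one extracts the leading paradifferential term $T_c$ with $c\in\Gamma^{s+m-1}_\alpha$ (resp.\ $T_e$, $e\in\Gamma^{m-1}_\alpha$), writes $K_\eps T_c u=T_cK_\eps u+[K_\eps,T_c]u$, and only then applies Lemma \ref{Keps} $(iii)$ to $[K_\eps,T_c]u$. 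At that point the order of the symbol is already $s+m-1$ while the target is $\mu'=\mu+s+m-1$, so the room $\mu'-\mu-(s+m-1)+\alpha=\alpha$ is a \emph{fixed} positive quantity given by the symbol regularity, and one gets a genuine $\eps^\theta$ with $\theta>0$ independent of any auxiliary $\delta$. The first piece $T_cK_\eps u$ then produces the $\|K_\eps u\|_{H^{\mu+s+m-1}}$ term. Your decomposition would work with the same fix (commute $K_\eps$ with $T_e$, not with the raw $[K_\eps,T_a]$), but the $J_\eps/K_\eps$ split on its own does not.
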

\begin{proof}
It is easy to see
\[
[K^2_\eps\langle D_x\rangle^s, T_a]u=K_\eps[K_\eps \langle D_x\rangle^s, T_a]u+[K_\eps, T_a](\langle D_x\rangle^sK_\eps u).
\]
$(i)$ $a\in \Gamma^m_{1+\alpha}$. Since $k_\eps(\xi) \langle \xi\rangle^s\in \Gamma^{s}_{1+\alpha}$, Theorem \ref{theo:sc} $(ii)$ implies
\[
[K_\eps  \langle D_x\rangle^s, T_a]=R+T_c
\]
where $R$ is of order $s+m-1-\alpha$ and $c=-i\partial_\xi (k_\eps(\xi)\langle \xi\rangle^s)\partial_xa(x,\xi)\in \Gamma^{s+m-1}_{\alpha}$. Similarly,
\[
[K_\eps, T_a]=R'+T_{e}
\]
with $R'$ of order $m-1-\alpha$ and $e=-i\partial_\xi k_\eps(\xi)\partial_xa(x,\xi)\in \Gamma^{m-1}_\alpha$. Consequently, we get
\begin{align*}
[K_\eps, \langle D_x\rangle^sT_a]u&=K_\eps Ru+K_\eps T_cu+R' (\langle D_x\rangle^sK_\eps u)+T_{e}(\langle D_x\rangle^sK_\eps u)\\
&=K_\eps Ru+T_cK_\eps u+[T_c, K_\eps]u+R' (\langle D_x\rangle^sK_\eps u)+T_{e}(\langle D_x\rangle^sK_\eps u).
\end{align*}
By virtues of Lemma \ref{Keps} $(ii)$ and Theorem \ref{theo:sc} $(ii)$ there holds
\[
\lA K_\eps Ru\rA_{H^{\mu}}+\lA R'(\langle D_x\rangle^sK_\eps u)\rA_{H^{\mu}}  \les \eps^\alpha M^m_{1+\alpha}(a)\|  u\|_{H^{\mu+s+m-1}}.
\]
On the other hand, Lemma \ref{Keps} $(iii)$ (applied with $m:=s+m-1,~r=\alpha,~\mu'=\mu+s+m-1$) and Theorem \ref{theo:sc} $(ii)$ imply that for some $\theta_1>0$,
\[
\lA [K_\eps, T_c]u\rA_{H^{\mu}}\les \eps^{\theta_1} M^{s+m-1}_{\alpha}(c)\| u\|_{H^{\mu+s+m-1}}\les \eps^{\theta_1} M^m_{1+\alpha}(a)\| u\|_{H^{\mu+s+m-1}},
\]
from which we complete the proof of $(i)$.\\
$(ii)$ $a\in \Gamma^m_\alpha$. In this case
\[
[K_\eps\langle D_x\rangle^s, T_a]=R,\quad [K_\eps, T_a]=R'
\]
 with $R$ of order $s+m-\alpha$ and $R'$ of order $m-\alpha$. The result follows as above by using again Lemma \ref{Keps} $(ii)$ and Theorem \ref{theo:sc} $(ii)$.
\end{proof} 
\subsection{Paralinearization of the Dichlet-Neumann operator}
Let ~$\eta \in W^{1, \infty}({\mathbf{R}}^d)$  and~$ f\in H^{\mez}({\mathbf{R}}^d)$, we recall the construction of ~$G(\eta)f$ in \cite{ABZ} (see Section 3 there).
\subsubsection{Straightening the free boundary} We recall first the diffeomorphism that straightens the free boundary introduced in \cite{ABZ}. Recall that
\begin{equation}\label{lesomega}
\Omega_h = \{(x,y)\in \xR^d\times \xR: \eta(x)-h<y<\eta(x)\}.
 \end{equation}
 For $\delta>0$ sufficiently small, the map $(x,z) \mapsto  (x, \rho(x,z))$ from $\widetilde{\Omega}\defn\xR^d\times(-1,0)$ to $\Omega_h$ with
\begin{equation}\label{diffeo}
\rho(x,z):=  (1+z)e^{\delta z\langle D_x \rangle }\eta(x) -z \big\{e^{-(1+ z)\delta\langle D_x \rangle }\eta(x) -h\big\}
\end{equation}
is a Lipschitz-difffeomorphism.
\begin{nota}\label{tilde}
For any function $f$ defined on $\Omega$, we set
\begin{equation}\label{change}
 \widetilde{f}(x,z) = f(x,\rho(x,z)),~(x, z)\in \widetilde\Omega,
\end{equation}
the image of $f$ via the diffeomorphism $(x, z)\mapsto (x, \rho(x, z))$. Then
\begin{equation}\label{lambda}
 \left\{
 \begin{aligned}
  \frac{\partial f}{\partial y}(x,\rho(x,z)) &= \frac{1}{\partial_z \rho}\partial_z \widetilde{f}(x,z):=\Lambda_1\widetilde{f}(x,z) \\
  \nabla_x f(x,\rho(x,z)) &= \big(\nabla_x \tilde{f} - \frac{\nabla_x \rho}{\partial_z \rho}\partial_z \widetilde{f}\big)(x,z) := \Lambda_2 \widetilde{f}(x,z).
  \end{aligned}
  \right.
\end{equation}
\end{nota}
\subsubsection{Definition of the Dirichlet-Neumann operator}
For $\eta\in W^{1,\infty}(\xR^d)$ and $f\in H^\mez(\xR^d)$, there exists a unique variational solution $\phi$ to the boundary value problem
\begin{equation}\label{v}
\Delta_{x,y}\phi=0\text{ in }\Omega,\quad
\phi\arrowvert_{\Sigma}=f,\quad \partial_n \phi\arrowvert_{\Gamma}=0
\end{equation}
and $\phi$ satisfies
\begin{equation}\label{est:vari}
\int_\Omega |\nabla_{x,y}\phi|^2dxdy\le K\Vert f\Vert^2_{H^\mez(\xR^d)}
\end{equation}
for some constant $K$ depending only on the Lipschitz norm of $\eta$ (see Section $3.1$ in \cite{ABZ}).\\ 
Let $v(x,z):=\phi(x, \rho(x,z))$ be the image of $\phi$ via the diffeomorphism $(x, z)\mapsto (x, \rho(x,z))$. The Dirichlet-Neumann operator is given by the formula
\begin{equation*}
G(\eta) f= \frac{1 +|\partialx \rho |^2}{\partial_z \rho} \partial_z v  - \partialx \rho \cdot\partialx v
\big\arrowvert_{z=0}.
\end{equation*}
\begin{nota}
We will denote by $\cF$ any function from $\xR^+$ to $\xR^+$, nondecreasing in each argument and $\cF$ may change from line to line but is independent of relevant quantities. 
\end{nota}
Remark that
\begin{equation}\label{zeta12}
\zeta_1 \defn  \frac{1 +|\partialx \rho |^2}{\partial_z \rho},\quad \zeta_2 \defn \partialx \rho 
\end{equation}
obey the bound
\begin{equation}\label{3.56bis}
\Vert \zeta_1-h^{-1}\Vert_{C^0([-1,0], H^{s-\mez})}+
\lA \zeta_2\rA_{C^0([-1,0], H^{s-\mez})}\le \mathcal{F}(\| \eta \|_{H^{s+\mez}}).
\end{equation}
 Recall that $v=\widetilde \phi$ satisfies
\begin{equation}\label{eq:v}
(\partial_z^2  +\alpha\Delta_x  + \beta \cdot\nabla_x\partial_z   - \gamma \partial_z )v =0,\qquad
v\arrowvert_{z=0}=\phi\arrowvert_{y=\eta(x)}=
f,
\end{equation}
where
\begin{equation}\label{alpha}
\alpha\defn \frac{(\partial_z\rho)^2}{1+|\nabla_x   \rho |^2},\quad
\beta\defn  -2 \frac{\partial_z \rho \nabla_x \rho}{1+|\nabla_x  \rho |^2} ,\quad
\gamma \defn \frac{1}{\partial_z\rho}\bigl(  \partial_z^2 \rho
+\alpha\Delta_x \rho + \beta \cdot \nabla_x \partial_z \rho\bigr)
\end{equation}
together with the following bound
\begin{equation}
\lA \alpha - h^2\rA_{X^{s-\mez}([-1, 0])}+
\lA \beta\rA_{X^{s-\mez}([-1, 0])}+\lA \gamma\rA_{X^{s-\tdm}([-1, 0])}\le \mathcal{F}(\| \eta \|_{H^{s+\mez}}).
\label{regv}
\end{equation}
\subsubsection{Elliptic estimates}
The regularity of a function $v$ satisfying the elliptic problem \eqref{eq:v} is given in Proposition $3.16$, \cite{ABZ} which we also recall here for the sake of completeness. For $I\subset \xR$, define the interpolation spaces
\begin{equation}\begin{aligned}
X^\mu(I)&=C^0_z(I, H^{\mu}({\mathbf{R}}^d))\cap L^2_z(I, H^{\mu+\mez}({\mathbf{R}}^d)),\\
Y^\mu(I)&=L^1_z(I, H^{\mu}({\mathbf{R}}^d))+L^2_z(I, H^{\mu-\mez}({\mathbf{R}}^d)).
\end{aligned}
\end{equation}
\begin{prop}[see \protect{\cite[Proposition~$3.16$ and $(3.56)$]{ABZ}}]\label{ellip:regu}
Let~$d\ge 1$ and
$$
s> \mez + \frac d 2,\quad - \frac 1 2 \leq \sigma \leq s - \frac 1 2.
$$
1. Let $v$ be a solution to
\begin{equation}\label{eq:elliptic}
(\partial_z^2  +\alpha\Delta_x  + \beta \cdot\nabla_x\partial_z   - \gamma \partial_z )v =F_0,\qquad
v\arrowvert_{z=0}=f.
\end{equation}
Assume that $f\in H^{\sigma+1}({\mathbf{R}}^d)$,
$F_0\in Y^\sigma([-1,0])$ and
\begin{equation}\label{assu:var}
\lA \nabla_{x,z} v\rA_{X^{-\mez}([-1,0])}
<+\infty.
\end{equation}
Then for any~$z_0,~z_1\in (-1,0)$, $z_0>z_1$, we have $\nabla_{x,z}v \in X^{\sigma}([z_0,0])$ and
$$
\lA \nabla_{x,z} v\rA_{X^{\sigma}([z_0,0])}
\le \mathcal{F}(\| \eta \|_{H^{s+\mez}})\left\{\lA f\rA_{H^{\sigma+1}}+\lA F_0\rA_{Y^\sigma([z_1,0])}
+ \lA \nabla_{x,z} v\rA_{X^{-\mez}([z_1,0])} \right\},
$$
where $\cF$ depends only on~$\sigma$ and $z_0, z_1$. \\
2. Define the symbol
\begin{equation}\label{defi:A}
 A  = \frac{1}{2}\Big(-i \beta\cdot \xi
+   \sqrt{ 4\alpha \la \xi \ra^2 -  (\beta \cdot \xi)^2}\Big).
\end{equation}
Let 
\[
 0<\eps\le \mez,\quad \eps<s-\mez-\frac{d}{2}.
\] 
Then for any $\sigma\in [-\mez, s-\mez-\eps]$ and $-1<z_1<z_0<0$, $w:=(\partial_z-T_A)v$ satisfies the estimate
\begin{equation}\label{gain:w}
\Vert w\Vert_{X^{\sigma+\eps}([z_0, 0])}\le \mathcal{F}(\| \eta \|_{H^{s+\mez}})\left\{\lA f\rA_{H^{\sigma+1}}+\lA F_0\rA_{Y^{\sigma+\eps}([z_1,0])}
+ \lA \nabla_{x,z} v\rA_{X^{-\mez}([z_1,0])} \right\}.
\end{equation}
\end{prop}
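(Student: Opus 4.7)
The plan is to paradifferentially factorize the elliptic operator in \eqref{eq:elliptic} as a product of two first-order parabolic operators, and then apply energy estimates in $z$. The symbol $A$ from \eqref{defi:A} is one root of the characteristic polynomial $X^2 + i(\beta\cdot\xi) X - \alpha |\xi|^2 = 0$; by ellipticity $\Re A \gtrsim \langle \xi \rangle$, while the other root $B$ satisfies $\Re B \lesssim -\langle \xi \rangle$. Using Theorem~\ref{theo:sc} together with the regularity \eqref{regv} of $\alpha,\beta,\gamma$, one checks at the paradifferential level that
\[
(\partial_z - T_A)(\partial_z - T_B) = \partial_z^2 + T_\alpha \Delta_x + T_\beta \cdot \nabla_x \partial_z - T_\gamma \partial_z + \mR,
\]
with a strictly lower-order remainder $\mR$. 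The full-coefficient errors $\alpha \Delta_x v - T_\alpha \Delta_x v$ and their analogues are absorbed into an acceptable source using Theorem~\ref{pproduct}.

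For Part 1, set $w \defn (\partial_z - T_A) v$; then $w$ satisfies a forward parabolic equation $(\partial_z - T_B) w = G_0$ on $[-1,0]$, where $G_0$ gathers $F_0$, the paralinearization errors, and $\mR v$. A standard parabolic energy estimate — multiplying by $\langle D_x \rangle^{2\sigma} w$ and integrating in $z$ over $[z_1,0]$, using G\aa rding-type coercivity $\Re T_B \lesssim -\langle D_x \rangle$ — yields $w \in X^\sigma([z_0,0])$, with the boundary slice at $z_1$ controlled by hypothesis \eqref{assu:var}. Then $v$ is recovered by solving the backward parabolic equation $(\partial_z - T_A) v = w$ with Dirichlet data $v\arrowvert_{z=0} = f$: since $\Re A \gtrsim \langle D_x \rangle$, the same type of estimate integrated from $z=0$ downward produces the claimed $X^\sigma$ bound on $\nabla_{x,z} v$.

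For Part 2, the improvement arises because the source $G_0$ in the forward parabolic equation for $w$ is automatically smoother than what is required to control $v$ itself: the symbolic-calculus remainder in the factorization, combined with the Sobolev regularity of the coefficients in \eqref{regv}, provides a derivative gain capped at $s - \mez - d/2$, while the parabolic $X$-spaces allow at most $\mez$ additional smoothing in $z$. Hence the joint constraint $0 < \eps \le \mez$, $\eps < s - \mez - d/2$. Technically one repeats the forward parabolic energy argument on $\langle D_x \rangle^\eps w$. The main obstacle is the careful bookkeeping of the paralinearization and commutator remainders: each of $\alpha,\beta,\gamma$ has only the limited Sobolev regularity recorded in \eqref{regv}, so one must check that every error term lies in the correct $Y^\sigma$ or $X^\sigma$ space, with a tame dependence on $\|\eta\|_{H^{s+\mez}}$ and without ever spending more derivatives on the coefficients than are actually available.
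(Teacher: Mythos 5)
This proposition is not actually reproved in the present paper --- it is recalled verbatim from [ABZ, Proposition~3.16 and estimate~(3.56)] and then used as a black box --- so the relevant comparison is with the argument in that reference. Your sketch matches it: paradifferentially factorize the second-order operator using the two roots $A$ (with $\Re A\gtrsim\langle\xi\rangle$) and $B$ (with $\Re B\lesssim -\langle\xi\rangle$) of $X^2+i(\beta\cdot\xi)X-\alpha\langle\xi\rangle^2$, run a forward parabolic energy estimate for $w=(\partial_z-T_A)v$ and a backward one for $v$ from the boundary data $f$, bootstrap in $\sigma$ up to the ceiling imposed by the coefficient regularity \eqref{regv}, and obtain Part~2 from the extra smoothness of the source in the $w$-equation. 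So the strategy is the right one.

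One concrete correction to your factorization identity. With $A,B$ as above, symbolic calculus and the $z$-derivative falling on the $z$-dependent symbol $B$ give
\[
(\partial_z - T_A)(\partial_z - T_B) = \partial_z^2 + T_\alpha\Delta_x + T_\beta\cdot\nabla_x\partial_z + \mR,
\]
with $\mR$ of order $2-\min\bigl(1, s-\mez-\tfrac d2\bigr)$; the term $-T_\gamma\partial_z$ is \emph{not} produced by this factorization, since $\gamma$ enters nowhere in $A$ or $B$. Consequently $\gamma\partial_z v$, the paralinearization errors such as $(\alpha-T_\alpha)\Delta_x v$ (and their $\beta,\gamma$ analogues), and $\mR v$ all have to be moved into the source $G_0$ of the $w$-equation and checked to lie in $Y^\sigma$ (resp.\ $Y^{\sigma+\eps}$ for Part~2), with constants tame in $\|\eta\|_{H^{s+\mez}}$. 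That bookkeeping --- which you rightly call the main obstacle but leave unexecuted --- is exactly where the numerology $\sigma\le s-\mez$ and $\eps<s-\mez-\tfrac d2$ originates, so the sketch is a plausible plan rather than a verified proof.
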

\subsubsection{Paralinearization}
The principal symbol of the Diriclet-Neumann operator is given by
\begin{equation}\label{eq.lambda}
\lambda(x,\xi)\defn\sqrt{(1+\la\partialx\eta(x)\ra^2)\la\xi\ra^2-(\partialx\eta(x)\cdot\xi)^2}.
\end{equation}
With respect to this symbol, we consider the remainder
$$
R(\eta):=G(\eta)-T_\lambda.
$$
 Let us fix an index 
\begin{equation}\label{fix:index}
s>1+\frac{d}{2}
\end{equation}
for the rest of this paper, unless stated otherwise. It was proved in \cite{ABZ} (see Proposition 3.13) that 
\begin{equation}\label{estR:ABZ}
\lA R(\eta)f\rA_{H^{s-\mez}}\le \mathcal{F} \bigl(\| \eta \|_{H^{s+\mez}}\bigr)\lA f\rA_{H^{s}}.
\end{equation}
The preceding estimate is linear with respect to $f$. We prove in the next proposition that in the high frequency regime,  $R(\eta)f$ is "almost linear" with respect to both $\eta$ and $f$. %Remark that from now till section \ref{section:conclude}, we shall drop the lower index $n$ noticing that all the estimates we shall derive contain a dependence constant which can be uniformly bounded in $n$ (due to the fact that $\mathcal{M}^{(n)}_s(T)$ is uniformly bounded  in $n$).
\begin{prop}\label{DNest}
If $f\in H^s$ then
\[
\lA K_{\eps}R(\eta)f\rA_{H^{s-\mez}}\le \cF\big(\lA (\eta, f\big)\rA_{H^{s+\mez}\times H^s})\Big(o(1)_{\eps\to 0}+\lA (K_{\eps}\eta,  K_{\eps}f)\rA_{H^{s+\mez}\times H^s}\Big).
\]
Recall that $o(1)_{\eps\to 0}$ denotes some constant of the form $\eps^\kappa$ with $\kappa>0$ independent of the solution.
\end{prop}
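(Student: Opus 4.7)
The plan is to revisit the ABZ proof of estimate \eqref{estR:ABZ} and verify, step by step, that the cut-off $K_\eps$ can be commuted through each building block of $R(\eta)f$, transforming the estimate into the ``almost-linear'' bound stated in the proposition. The tools will be the elliptic factorization of Proposition \ref{ellip:regu}, Bony decomposition, the paralinearization Theorem \ref{paralin}, and the commutator Lemmas \ref{Keps} and \ref{lemma:K2}.

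First, I would recall from Section 3 of \cite{ABZ} that, using the good unknown $w=(\partial_z-T_A)v$ and $v|_{z=0}=f$, one has
\[
G(\eta)f = \zeta_1 T_A f - \zeta_2 \cdot \nabla_x f + \zeta_1 w|_{z=0},
\]
and that the principal symbol of $\zeta_1 T_A - i\zeta_2\cdot\nabla_x$ coincides with $\lambda$ modulo symbols of strictly lower order. Applying Bony decomposition to the products $\zeta_1(T_A f)$ and $\zeta_2\cdot\nabla_x f$ yields a natural splitting of $R(\eta)f$ into three types: (I) paradifferential operators acting on $f$ of strictly lower order whose symbols are smooth nonlinear functions of $\nabla_x\eta$; (II) paraproducts $T_{T_A f}\zeta_1$, $T_{\nabla_x f}\cdot\zeta_2$ together with Bony remainders $R(\zeta_1,T_A f)$ and $R(\zeta_2,\nabla_x f)$; and (III) the boundary contribution $\zeta_1 w|_{z=0}$.

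For (III), the $\eps$-gain \eqref{gain:w} combined with Lemma \ref{Keps}(ii) yields $\lA K_\eps(\zeta_1 w|_{z=0})\rA_{H^{s-\mez}} \le \eps^\eps \cF(\lA\eta\rA_{H^{s+\mez}})\lA f\rA_{H^s}$, which is $o(1)_{\eps\to 0}$. The Bony remainders in (II) lie in $H^{2s-\mez-d/2}$ by \eqref{Bony}, strictly smoother than $H^{s-\mez}$ since $s>1+d/2$, so $K_\eps$ again produces an $o(1)_{\eps\to 0}$ contribution. For the paraproduct pieces of (II) and for the operators in (I), I would commute $K_\eps$ past the paradifferential operators using Lemma \ref{Keps}(iii) at a cost of $\eps^\theta$, landing respectively on $K_\eps\zeta_1$, $K_\eps\zeta_2$, or on $K_\eps f$; then use Theorem \ref{paralin} to paralinearize $\zeta_1,\zeta_2$ (and the symbols $\lambda, A, \alpha, \beta, \gamma$ entering (I)) as smooth nonlinear functions of $\nabla_x\eta$, writing each as a paraproduct on $\nabla\eta$ plus a smoother remainder, and transfer $K_\eps$ onto $K_\eps\eta$ modulo $o(1)_{\eps\to 0}$ terms.

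The main technical obstacle lies in the systematic bookkeeping of (I): every symbolic-calculus remainder produced by applications of Theorem \ref{theo:sc}(ii) (such as $T_{\zeta_1}T_A - T_{\zeta_1\sharp A}$) must be shown either to be of strictly negative order in $\xi$, so absorbed by $K_\eps$ into a power of $\eps$ via Lemma \ref{Keps}(ii), or to be a paradifferential operator with symbol paralinearizable in $\nabla\eta$ whose leading paraproduct acts on $\eta$. The feasibility of this transfer relies on the structural fact—implicit in \eqref{estR:ABZ} and to be made quantitative here—that the highest Sobolev norm of $\eta$ (resp. $f$) appears linearly in each estimate, so that the high-frequency part of $\eta$ (resp. $f$) can always be isolated and replaced by $K_\eps\eta$ (resp. $K_\eps f$).
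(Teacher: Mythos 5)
Your decomposition into (I), (II), (III) is essentially a reorganization of the paper's splitting $R=R_1+R_2+R_3$ (part (II) is $R_1$ plus the contributions from replacing $\partial_z v$ by $T_A v + w$ at $z=0$, part (I) is $R_3$, and part (III) corresponds to the paper's $R_2 = T_{\zeta_1}w|_{z=0}$). Parts (I) and (II) are treated in the same spirit as the paper, and your sketch for those is sound. The genuine gap is in part (III).

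You claim that \eqref{gain:w} together with Lemma~\ref{Keps}\,(ii) makes $\lA K_\eps(\zeta_1 w|_{z=0})\rA_{H^{s-\mez}}$ an $o(1)_{\eps\to 0}$ quantity. This cannot work as stated. The optimal output of \eqref{gain:w} at boundary data $f\in H^s$ is $w|_{z=0}\in H^{s-\mez}$ (take $\sigma=s-1$, $\eps=\mez$); the parameter range $\sigma\le s-\mez-\eps$, $\eps\le\mez$ gives no room to push $w|_{z=0}$ into $H^{s-\mez+\delta}$ for any $\delta>0$ without demanding $f\in H^{s+\delta}$. Since the target norm is exactly $H^{s-\mez}$, Lemma~\ref{Keps}\,(ii) yields no small factor of $\eps$: $K_\eps w|_{z=0}$ is merely bounded in $H^{s-\mez}$, not small. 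This is precisely why the paper does \emph{not} claim an $o(1)$ bound for the $R_2$-contribution but instead proves (Lemma~\ref{Kepsv}) that
\[
\lA \partial_z K_\eps v - T_A K_\eps v\arrowvert_{z=0}\rA_{H^{s-\mez}}\lesssim o(1)_{\eps\to 0}+\lA(K_\eps\eta,\,K_\eps f)\rA_{H^{s+\mez}\times H^s},
\]
and the right-hand side genuinely contains $\lA K_\eps f\rA_{H^s}$ and $\lA K_\eps\eta\rA_{H^{s+\mez}}$, not merely $o(1)$. The mechanism is: commute the elliptic equation \eqref{eq:v} with $K_\eps$, note $K_\eps v$ solves the same equation with boundary data $K_\eps f$ and a commutator source $F_0 = -[K_\eps,\alpha]\Delta v-[K_\eps,\beta]\nabla\partial_z v+[K_\eps,\gamma]\partial_z v$, apply \eqref{gain:w} to $K_\eps v$ (giving the $\lA K_\eps f\rA_{H^s}$ contribution), and then show $\lA F_0\rA_{Y^{s-\mez}}\lesssim o(1)_{\eps\to 0}+\lA K_\eps\eta\rA_{H^{s+\mez}}$ by Bony decompositions and the same paralinearization-of-$\eta$ machinery you describe for (II). Without this lemma, your proof is missing the source of the $\lA K_\eps f\rA_{H^s}$ term in the conclusion of the proposition, and the treatment of the most singular piece of $R(\eta)f$ is incomplete.

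One smaller remark: for (I) you phrase the task as ``paralinearize the symbols $A,\alpha,\beta,\gamma$,'' but Theorem~\ref{paralin} applies to scalar nonlinear functions, not $\xi$-dependent symbols. The paper circumvents this entirely: $R_3=T_{\zeta_1}T_A-T_{\zeta_1 A}$ is of order $\mez-\rho_0<1$ by Theorem~\ref{theo:sc}\,(ii), so $K_\eps R_3$ is bounded in $H^{s-\mez+\rho_0}$ and Lemma~\ref{Keps}\,(ii) gives an $\eps^{\rho_0}$ factor outright, without any symbol paralinearization. Your bookkeeping for (I) would eventually reach the same conclusion, but the detour through symbol paralinearization is unnecessary and potentially misleading.
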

\begin{proof}
For the sake of simplification, we shall write in this proof $A\les B$ if there exists a nondecreasing function $\cF:\xR^+\times \xR^+\to \xR^+$ such that  
\begin{equation}\label{nota:less}
A\le   \cF\big(\lA (\eta, f)\rA_{H^{s+\mez}\times H^s}\big)B.
\end{equation}
Going back to the proof of Proposition 3.13, \cite{ABZ} we have that $R$ is given by $R=R_1+R_2+R_3$,
where (recall that $v(x,z)=\phi(x, \rho(x,z))$ is a solution to the boundary value problem \eqref{eq:elliptic})
\begin{align*}
&R_1=(\zeta_1\partial_zv-T_{\zeta_1}\partial_zv)\mid_{z=0}-(\zeta_2\nabla v-T_{\zeta_2}\nabla v)\mid_{z=0},\\
&R_2=(T_{\zeta_1}\partial_zv-T_{\zeta_1}T_Av)\mid_{z=0},\\
&R_3=(T_{\zeta_1}T_A-T_{\zeta_1A})v\mid_{z=0}
\end{align*}
with $A\in \Gamma^1_{\frac{1}{2}+\rho_0}(\xR^d\times[-1, 0])$  defined by \eqref{defi:A} and $\zeta_1, \zeta_2$ are given by \eqref{zeta12} and $v$ is a solution to \eqref{eq:v} with $v(0)=f$.\\
$(i)$ {\it Estimate for $K_{\eps}R_3$}\\
Since $\zeta_1(0)\in H^{s-\mez}\subset W^{\mez+\rho_0,\infty}$, by Theorem \ref{theo:sc} $(ii)$ $T_{\zeta_1}T_A-T_{\zeta_1A}$ is of order $0+1-(\mez+\rho_0)=\frac{1}{2}-\rho_0$. Hence,
\begin{equation}\label{R1DN}
\lA K_{\eps}R_3\rA_{H^{s-\mez}}\le \eps^{\rho_0}\lA R_3\rA_{H^{s-\mez+\rho_0}}\lesssim \eps^{\rho_0}\lA v(0)\rA_{H^s}\les o(1)_{\eps\to 0}.
\end{equation}
$(ii)$ {\it Estimate for $K_{\eps}R_2$}\\
We write
\begin{align*}
K_{\eps}R_2&=K_{\eps}T_{\zeta_1}(\partial_zv-T_Av)\\
&=[K_{\eps}, T_{\zeta_1}](\partial_zv-T_Av)+T_{\zeta_1}K_{\eps}(\partial_zv-T_Av)\\
&=[K_{\eps}, T_{\zeta_1}](\partial_zv-T_Av)+T_{\zeta_1}(\partial_zK_{\eps}v-T_AK_{\eps}v-[K_{\eps},T_A]v)
\end{align*}
where all the values are evaluated at $z=0$.\\
 Using estimate \eqref{gain:w} with $\sigma_0=s-1,~\eps=\mez$, we have
\[
\lA \partial_zv-T_Av\rA_{H^{s-\mez}}\lesssim \lA f\rA_{H^s}\lesssim 1.
\]
On the other hand $\zeta_1(0)\in \Gamma^0_{\frac{1}{2}}$  and it follows from Lemma \ref{Keps} $(iii)$ that
\[
\lA [K_{\eps}, T_{\zeta_1}](\partial_zv-T_Av)\rA_{H^{s-\mez}}\les o(1)_{\eps\to 0}.
\]
Since $A\in \Gamma^1_{\mez+\rho_0}$ and $1-(\mez+\rho_0)<\mez$ we deduce again from Lemma \ref{Keps} $(iii)$  (note that $v(0)=f\in H^s$)
\[
\lA [K_{\eps}, T_A]v\rA_{H^{s-\mez}}\les o(1)_{\eps\to 0}.
\]
Finally, remark that $K_{\eps}v$ satisfies a similar elliptic equation as the one for $v$ (but inhomogeneous) with boundary value  $K_\eps v\arrowvert_{z=0}=K_{\eps}f$ . We claim that
\begin{equation}\label{eq:Kepsv}
\lA \partial_zK_{\eps}v-T_AK_{\eps}v\rA_{H^{s-\mez}}\lesssim o(1)_{\eps\to 0}+\lA (K_{\eps}\eta,  K_{\eps}f)\rA_{H^{s+\mez}\times H^s}
\end{equation}
which shall be proved in Lemma \ref{Kepsv}.\\
Putting together all the estimates gives
\begin{equation}
\lA K_{\eps}R_2\rA_{H^{s-\mez}}\lesssim o(1)_{\eps\to 0}+\lA (K_{\eps}\eta,  K_{\eps}f)\rA_{H^{s+\mez}\times H^s}.
\end{equation}
$(iii)$ {\it Estimate for $K_{\eps}R_1$}\\
The estimates for two terms in $R_1$ are similar but the first one is more difficult and we present the proof here. According to the Bony decomposition
\begin{align}\label{DN:1}
(\zeta_1\partial_zv-T_{\zeta_1}\partial_zv)&=
((\zeta_1-h^{-1})\partial_zv-T_{(\zeta_1-h^{-1})}\partial_zv)+h^{-1}\partial_zv-T_{h^{-1}}\partial_zv\\ \nonumber
&=T_{\partial_zv}(\zeta_1-h^{-1})+R((\zeta_1-h^{-1}), \partial_zv)+(h^{-1}-T_{h^{-1}})\partial_zv,
\end{align}
where all the values are estimated at $z=0$.\\
Applying \eqref{Bony} with $\alpha=s-1,~\beta=s-\mez$ with the remark that $\alpha+\beta-\frac{d}{2}>s-\mez+\rho_0$ yields at $z=0$
\begin{align*}
\lA K_{\eps}R(\partial_zv, \zeta_1-h^{-1}))\rA_{H^{s-\mez}}&\le \eps^{\rho_0}\lA R(\partial_zv, \zeta_1-h^{-1}))\rA_{H^{s-\mez+\rho_0}}\\
&\le \eps^{\rho_0}\lA \partial_zv\rA_{H^{s-1}}\lA (\zeta_1-h^{-1})\rA_{H^{s-\mez}}\les o(1)_{\eps\to 0}.
\end{align*}
For the paraproduct term, we write at $z=0$
\[
K_{\eps}T_{\partial_zv}(\zeta_1-h^{-1})=[K_{\eps},T_{\partial_zv}](\zeta_1-h^{-1})+T_{\partial_zv}K_{\eps}(\zeta_1-h^{-1}).
\]
Lemma \ref{Keps} $(iii)$ implies that $\lA [K_{\eps},T_{\partial_zv}](\zeta_1-h^{-1})\rA_{H^{s-\mez}}\les o(1)_{\eps\to 0}$.\\
To estimate $T_{\partial_zv}K_{\eps}(\zeta_1-h^{-1})$ we compute
\[
\zeta_1\mid_{z=0}= \frac{1 +|\partialx \rho |^2}{\partial_z \rho}\mid_{z=0}=\frac{1+|\nabla \eta|^2}{\eta+\delta\langle D_x\rangle\eta-\exp(-\delta \langle D_x\rangle)\eta+h}.
\]

Set $G=\eta+\delta\langle D_x\rangle\eta-\exp(-\delta \langle D_x\rangle)\eta$ we have
\[
\zeta_1(0)-\frac{1}{h}=\frac{1}{h}|\nabla \eta|^2-\frac{G}{G+h}\frac{1}{h}|\nabla \eta|^2-\frac{G}{(G+h)h}.
\]
Applying Theorem \ref{paralin} with $r=s-\mez$ we deduce that
\[
|\nabla\eta|^2=2\sum_{j=1}^dT_{\partial_j\eta}\partial_j\eta+R_4
\]
with $R_4\in H^{2s-1-\frac{d}{2}}$. Commuting this equation with $K_{\eps}$ yields
\[
\lA K_{\eps}|\nabla\eta|^2\rA_{H^{s-\mez}}\les o(1)_{\eps\to 0}+\lA K_{\eps}\eta\rA_{H^{s+\mez}}.
\]
By the same argument, we obtain
\[
\lA K_{\eps}\frac{G}{G+h}\rA_{H^{s-\mez}}\les o(1)_{\eps\to 0}+\lA K_{\eps}\eta\rA_{H^{s+\mez}}.
\]
For the product term we use the Bony decomposition
\[
\frac{G}{G+h}|\nabla \eta|^2=T_{\frac{G}{G+h}}|\nabla \eta|^2+T_{|\nabla \eta|^2}\frac{G}{G+h}+R(|\nabla \eta|^2, \frac{G}{G+h}).
\]
Then, commuting with $K_{\eps}$ and arguing as above we derive
\[
\lA K_{\eps}\frac{G}{G+h}|\nabla \eta|^2\rA_{H^{s-\mez}}\les o(1)_{\eps\to 0}+\lA K_{\eps}\eta\rA_{H^{s+\mez}}.
\]
Therefore,
\[
\lA K_{\eps}T_{\partial_zv}(\zeta_1-h^{-1})\rA_{H^{s-\mez}}\les o(1)_{\eps\to 0}+\|K_{\eps}\eta\|_{H^{s+\mez}}.
\]
On the other hand, it is easy to see that
\[
\lA K_{\eps}(h^{-1}\partial_zv-T_{h^{-1}}\partial_zv)\rA_{H^{s-\mez}}\les o(1)_{\eps\to 0}.
\]
In view of \eqref{DN:1}, we conclude
\[
\|K_{\eps}R_1\|_{H^{s-\mez}}\les o(1)_{\eps\to 0}+\|K_{\eps}\eta\|_{H^{s+\mez}}.
\]
\end{proof}
To complete the proof of Proposition \ref{DNest}, we are left with the claim \eqref{eq:Kepsv}.
\begin{lemm}\label{Kepsv}
We have (in view of the notation \eqref{nota:less})
\[
\lA \partial_zK_{\eps}v-T_AK_{\eps}v\arrowvert_{z=0}\rA_{H^{s-\mez}}\lesssim o(1)_{\eps\to 0}+\lA (K_{\eps}\eta,  K_{\eps}f)\rA_{H^{s+\mez}\times H^s}.
\]
\end{lemm}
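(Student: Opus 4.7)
The plan is to apply Proposition \ref{ellip:regu} directly to $K_{\eps}v$ rather than to $v$ itself. Since $K_{\eps}$ is a Fourier multiplier in the $x$-variable, commuting it through equation \eqref{eq:v} shows that $K_{\eps}v$ solves
\begin{equation*}
(\partial_z^2 + \alpha\Delta_x + \beta\cdot\nabla_x\partial_z - \gamma\partial_z)(K_{\eps}v) = F_{\eps}, \qquad K_{\eps}v\arrowvert_{z=0} = K_{\eps}f,
\end{equation*}
with commutator source
\begin{equation*}
F_{\eps} = -[K_{\eps}, \alpha]\Delta_x v - [K_{\eps}, \beta]\cdot\nabla_x\partial_z v + [K_{\eps}, \gamma]\partial_z v.
\end{equation*}
Applying part 2 of Proposition \ref{ellip:regu} to $K_{\eps}v$ with $\sigma = s-1$ and the small parameter equal to $\mez$ (admissible under the running assumption $s > 1 + d/2$) then yields
\begin{equation*}
\lA (\partial_z - T_A)K_{\eps}v\arrowvert_{z=0}\rA_{H^{s-\mez}} \lesssim \lA K_{\eps}f\rA_{H^s} + \lA F_{\eps}\rA_{Y^{s-\mez}} + \lA \nabla_{x,z} K_{\eps}v\rA_{X^{-\mez}}.
\end{equation*}

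The third term on the right is of size $o(1)_{\eps\to 0}$: by part 1 of Proposition \ref{ellip:regu} applied to $v$ with $\sigma = s-1$, one has $\nabla_{x,z}v \in X^{s-1}$, and Lemma \ref{Keps} $(ii)$ then gives $\lA \nabla_{x,z} K_{\eps}v\rA_{X^{-\mez}} \lesssim \eps^{s-\mez}$. The first term $\lA K_{\eps}f\rA_{H^s}$ is already admissible in the target bound. Hence the remaining --- and main --- task is to establish
\begin{equation*}
\lA F_{\eps}\rA_{Y^{s-\mez}} \lesssim o(1)_{\eps\to 0} + \lA K_{\eps}\eta\rA_{H^{s+\mez}}.
\end{equation*}

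Each commutator is analyzed by Bony's decomposition; for instance,
\begin{equation*}
[K_{\eps}, \alpha]\Delta_x v = [K_{\eps}, T_\alpha]\Delta_x v + \big(K_{\eps}T_{\Delta_x v}\alpha - T_{K_{\eps}\Delta_x v}\alpha\big) + \big(K_{\eps}R(\alpha, \Delta_x v) - R(\alpha, K_{\eps}\Delta_x v)\big).
\end{equation*}
The paradifferential commutator is $o(1)_{\eps\to 0}$ by Lemma \ref{Keps} $(iii)$, exploiting the extra regularity $\alpha \in X^{s-\mez}$. The paraproduct and remainder differences are further rearranged so that $K_{\eps}$ is transferred onto $\alpha$, modulo $o(1)_{\eps\to 0}$ errors; paralinearization (Theorem \ref{paralin}) applied to the smooth nonlinear dependence $\alpha = \alpha(\eta,\partial\eta,\ldots)$ then converts $K_{\eps}\alpha$ into $\lA K_{\eps}\eta\rA_{H^{s+\mez}}$ up to $o(1)_{\eps\to 0}$, mirroring the treatment of the $G/(G+h)$ factor in the proof of Proposition \ref{DNest} above. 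The contribution of $\beta$ is handled identically.

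The main obstacle will be the $\gamma$-commutator. Since $\gamma \in X^{s-\tdm}$ has half a derivative less $x$-regularity than $\alpha$ and $\beta$, a direct placement of $[K_{\eps}, \gamma]\partial_z v$ into $L^1_z H^{s-\mez}$ is out of reach at the critical regularity; the flexibility of the target norm $Y^{s-\mez} = L^1_z H^{s-\mez} + L^2_z H^{s-1}$ must be exploited by placing the $\gamma$-contribution into the $L^2_z H^{s-1}$ slot, which buys one derivative of room in $x$ at the cost of an $L^2$ integrability in $z$ --- precisely the integrability furnished by $\partial_z v$ via Proposition \ref{ellip:regu}. Throughout, one must verify that every top-order norm of $\eta$ enters strictly linearly, in accordance with the guiding philosophy of \S\ref{idea}, so that it can be absorbed into $\lA K_{\eps}\eta\rA_{H^{s+\mez}}$ rather than leaving a stubborn $\lesssim 1$ remainder that would destroy the convergence argument.
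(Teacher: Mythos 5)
Your proposal is correct and follows essentially the same route as the paper's proof: commute the elliptic equation with $K_\eps$, apply Proposition~\ref{ellip:regu} (part~2) to $K_\eps v$, and estimate the commutator source in $Y^{s-\mez}$ by decomposing each commutator via Bony and placing the $\gamma$-term in the $L^2_z H^{s-1}$ slot, with the $\gamma$-contribution alone producing the $\lA K_\eps\eta\rA_{H^{s+\mez}}$ term via the $T_{\partial_z v}K_\eps\gamma$ piece. Two small corrections: the $L^2_z$ integrability is furnished by $\gamma\in X^{s-\tdm}\subset L^2_z H^{s-1}$, not by $\partial_z v$ (which lies in $L^\infty_z H^{s-1}$); and the paper shows the $\alpha$- and $\beta$-commutators contribute a pure $o(1)_{\eps\to 0}$ remainder with no $\lA K_\eps\eta\rA_{H^{s+\mez}}$ term at all, so the paralinearization detour you propose for transferring $K_\eps$ onto $\alpha$ is unnecessary for those terms, though it is not incorrect.
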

\begin{proof}
Commuting \eqref{eq:v} with $K_{\eps}$ yields
\begin{equation}\label{eq:v'}
(\partial_z^2  +\alpha\Delta_x  + \beta \cdot\nabla_x\partial_z   - \gamma \partial_z )K_{\eps}v =F_0,\qquad
K_{\eps}v\arrowvert_{z=0}=K_{\eps}f,
\end{equation}
with
\[
F_0=-[K_{\eps},\alpha]\Delta_xv-[K_{\eps},\beta]\nabla_x\partial_zv+[K_{\eps},\gamma]\partial_z v.
\]
It follows from \eqref{gain:w} with $\sigma=s-1,~\eps=\mez$  that for any $z_1\in (-1, 0)$ we have with $J=(z_1, 0)$
\begin{equation}\label{normal:1}
\lA \partial_zK_{\eps}v-T_AK_{\eps}v\arrowvert_{z=0}\rA_{H^{s-\mez}}
\lesssim \lA K_{\eps}V\rA_{H^s}+ \lA\nabla_{x,z}K_{\eps}v\rA_{X^{-\mez}(J)}+\lA F_0\rA_{Y^{s-\mez}(J)}.
\end{equation}
On the other hand,  Proposition \ref{ellip:regu} with $\sigma=0$ also gives
\begin{equation}\label{normal:2}
 \lA\nabla_{x,z}K_{\eps}v\rA_{X^{-\mez}(J)}\les \eps^{\mez} \lA\nabla_{x,z}v\rA_{X^0(J)}\les \eps^{\mez}\lA f\rA_{H^1}\les o(1)_{\eps\to 0}.
\end{equation}
In views of \eqref{normal:1} and \eqref{normal:2}, to end the proof it remains to estimate $\lA F_0\rA_{Y^{s-\mez}(J)}$. \\
\hk {\bf 1.} We write
\[
[K_{\eps},\alpha]=[K_{\eps},T_{\alpha}]+K_{\eps}(\alpha-T_{\alpha})-(\alpha-T_{\alpha})K_{\eps}.
\]
Next, using the paralinearization estimate $v)$, Theorem \ref{pproduct} and Lemma \ref{Keps} $(ii)$, we have since $s>1+\rho_0+\frac{d}{2}$
\begin{align*}
\lA K_{\eps}(\alpha-T_{\alpha})\Delta v\rA_{L^2(J, H^{s-1})}&\le \eps^{\rho_0}\lA(\alpha-T_{\alpha})\Delta v\rA_{L^2(J, H^{s-1+\rho_0})}\\
&\les \eps^{\rho_0}\left(1+\lA \alpha-h^2\rA_{L^{\infty}(J, H^{s-\mez})}\right)\lA \Delta v\rA_{L^2(J, H^{s-\frac{3}{2}})}\les o(1)_{\eps\to 0}.
\end{align*}
Similarly,
\begin{align*}
\lA(\alpha-T_{\alpha}) K_{\eps}\Delta v\rA_{L^2(J, H^{s-1})}&
\les\left(1+\lA \alpha-h^2\rA_{L^{\infty}(J, H^{s-\mez})}\right)\lA  K_{\eps}\Delta v\rA_{L^2(J, H^{s-\tdm-\rho_0})}\\
&\le \eps^{\rho_0}\left(1+\lA \alpha-h^2\rA_{L^{\infty}(J, H^{s-\mez})}\right)\lA \Delta v\rA_{L^2(J, H^{s-\frac{3}{2}})}\les o(1)_{\eps\to 0}.
\end{align*}
On the other hand, since $\alpha(z)\in \Gamma^0_{\mez+\rho_0}$ and $\Delta v\in L^2(J, H^{s-\frac{3}{2}})$, we deduce from Lemma \ref{Keps} $(iii)$ that
\[
\lA [K_{\eps},T_{\alpha}]\Delta v\rA_{L^2(J, H^{s-1})}\les o(1)_{\eps\to 0}.
\]
Therefore,
\[
\lA [K_{\eps},\alpha]\Delta v\rA_{Y^{s-\mez}(J)}\le \lA [K_{\eps},\alpha]\Delta v\rA_{L^2(J, H^{s-1})}\les o(1)_{\eps\to 0}.
\]
Because $\alpha$ and $\beta$ as well as $\Delta v$ and $\nabla\partial_zv$ have the same regularity, the argument above also proves that
\[
\lA [K_{\eps},\beta]\nabla_x\partial_zv\rA_{Y^{s-\mez}(J)}\les o(1)_{\eps\to 0}.
\]
\hk {\bf 2.} Finally, to estimate $[K_{\eps},\gamma]\partial_zv$ we write
\begin{align*}
[K_{\eps},\gamma]\partial_zv&=(K_{\eps}T_{\gamma}-T_{\gamma}K_{\eps})\partial_zv+K_{\eps}T_{\partial_zv}\gamma+K_{\eps}R(\partial_zv, \gamma)-(\gamma-T_{\gamma})K_{\eps}\partial_zv\\
&=[K_{\eps},T_{\gamma}]\partial_zv+[K_{\eps},T_{\partial_zv}]\gamma+T_{\partial_zv}K_{\eps}\gamma+K_{\eps}R(\partial_zv, \gamma)-(\gamma-T_{\gamma})K_{\eps}\partial_zv.
\end{align*}
Since $\gamma\in L^2(J, H^{s-1})$ and $\partial_zv\in L^{\infty}(J, H^{s-1})$  with $s-1>\rho_0+\frac{d}{2}$, we can apply Lemma \ref{Keps} $(iii)$ to have
\[
\lA [K_{\eps},T_{\gamma}]\partial_zv\rA_{L^2(J, H^{s-1})}+\lA [K_{\eps},T_{\partial_zv}]\gamma\rA_{L^2(J, H^{s-1})}\les o(1)_{\eps\to 0}.
\]
On the other hand, it follows from \eqref{Bony} and Lemma \ref{Keps} $(ii)$ that
\[
\lA K_{\eps}R(\partial_zv, \gamma)\rA_{L^2(J, H^{s-1})}\les o(1)_{\eps\to 0}.
\]
Next, combining Theorem \ref{pproduct} $v)$ and Lemma \ref{Keps} $(ii)$ yields
\begin{align*}
\lA (\gamma-T_{\gamma})K_{\eps}\partial_zv\rA_{L^2(J, H^{s-1})}&\les \lA \gamma\rA_{L^2(J, H^{s-1})}\lA K_{\eps}\partial_zv\rA_{L^{\infty}(J, H^{s-1-\rho_0})}\\
&\les \eps^{\rho_0}\lA \gamma\rA_{L^2(J, H^{s-1})}\lA \partial_zv\rA_{L^{\infty}(J, H^{s-1})}\les o(1)_{\eps\to 0}.
\end{align*}
Therefore, we obtain
\[
\lA [K_{\eps},\gamma]\rA_{L^2(J, H^{s-1})}\les o(1)_{\eps\to 0}+\lA T_{\partial_zv}K_{\eps}\gamma\rA_{L^2(J, H^{s-1})}\les o(1)_{\eps\to 0}+\lA K_{\eps}\gamma\rA_{L^2(J, H^{s-1})}.
\]
Now we remark that by a completely similar argument as in part $(iii)$ of the proof of Proposition \ref{DNest} it holds that
\[
\lA K_{\eps}\gamma\rA_{L^2(J, H^{s-1})}\les o(1)_{\eps\to 0}+\lA K_{\eps}\eta\rA_{H^{s+\mez}}.
\]
Putting all the estimates together, we end up with
\[
\lA [K_{\eps},\gamma]\partial_zv\rA_{L^2(J, H^{s-1})}\les o(1)_{\eps\to 0}+\lA K_{\eps}\eta\rA_{H^{s+\mez}}.
\]
From {\bf 1.} and {\bf 2.} we conclude the proof.
\end{proof}
\subsection{Paralinearization and symmetrization of the system}\label{sec:para}
It was shown in \cite{ABZ} that the Zakharov/Craig--Sulem system can be reformulated in terms of the free surface $\eta$ and the trace $(B, V)$ of the velocity filed on the free surface. Considering $(\eta, \psi)$ a solution to \eqref{ww} on the time interval $[0, T]$, to recall this formulation we set
\begin{equation}\label{defi:unknowns}
\zeta = \partialx \eta, \quad
\B=\partial_y\phi \eval ,  \quad
V=\nabla_x \phi\eval , \quad \ma=-\partial_y P \eval ,
\end{equation}
where recall that~$\phi$ is the velocity potential and the pressure~$P=P(t,x,y)$ is given by
\begin{equation}\label{defi:P}
-P=\partial_t \phi+\mez \la \nabla_{x,y}\phi\ra^2+gy.
\end{equation}
\begin{prop}[see \protect{\cite[Proposition~4.3]{ABZ}}]\label{prop:newS}
Let~$s>\mez +\frac{d}{2}$. We have
\begin{align}
(\partial_{t}+V\cdot\partialx)\B&=\ma-g,\label{eq:B}\\
(\partial_t+V\cdot\partialx)V+\ma\zeta&=0,\label{eq:V}\\
(\partial_{t}+V\cdot\partialx)\zeta&=G(\eta)V+ \zeta G(\eta)\B +\gamma,\label{eq:zeta}
\end{align}
where the remainder ~$\gamma=\gamma(\eta,\psi,V, B)$ satisfies 
\begin{equation}\label{p42.1}
\lA \gamma\rA_{H^{s-\mez}}\le \mathcal{F}(\lA (\eta,\psi,V, B)\rA_{H^{s+\mez}\times H^{s+\mez}\times H^s\times H^s}).
\end{equation}
\end{prop}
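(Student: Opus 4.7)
My plan is to obtain \eqref{eq:B} and \eqref{eq:V} directly from the Euler equation evaluated on the surface, and \eqref{eq:zeta} by differentiating the kinematic boundary condition and then using incompressibility and irrotationality to recognize the Dirichlet--Neumann operator in the result. The basic tool throughout is the trace chain rule: for any smooth function $f$ on $\Omega$ and $F(t,x)=f(t,x,\eta(t,x))$, one has $(\partial_t+V\cdot\nabla_x)F=(\partial_tf+v\cdot\nabla_{x,y}f)\eval$, which follows from the chain rule and the surface-transport relation $\partial_t\eta+V\cdot\nabla\eta=B$ (equivalent to the kinematic condition via $G(\eta)\psi=B-V\cdot\nabla\eta$).

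Applying the trace chain rule to $f=v_y$ in the $y$-component of the Euler equation $\partial_t v+(v\cdot\nabla_{x,y})v=-\nabla_{x,y}P-ge_y$ gives $(\partial_t+V\cdot\nabla_x)B=-\partial_yP\eval-g=a-g$, which is \eqref{eq:B}. For \eqref{eq:V}, the $x$-component gives $(\partial_t+V\cdot\nabla_x)V=-\nabla_xP\eval$, and differentiating the identity $P(t,x,\eta(t,x))=0$ in $x$ yields $\nabla_xP\eval=-\partial_yP\eval\,\nabla\eta=a\zeta$. Both derivations are purely algebraic.

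The substance of the proposition is \eqref{eq:zeta}. Applying $\nabla_x$ to $\partial_t\eta=B-V\cdot\zeta$, adding $V\cdot\nabla\zeta$, and using the symmetry $\partial_j\zeta_k=\partial_k\zeta_j$ to cancel the antisymmetric contribution, one obtains the exact identity
\begin{equation*}
(\partial_t+V\cdot\nabla_x)\zeta_j=\partial_jB-(\partial_jV)\cdot\zeta.
\end{equation*}
Writing $B=v_y\eval$, $V_i=v_{x,i}\eval$, applying the trace chain rule to each, and using the irrotationality $\partial_yv_{x,i}=\partial_iv_y$ together with the curl condition $\partial_iv_{x,k}=\partial_kv_{x,i}$, one rewrites this right-hand side as $(\partial_jv_y-\zeta\cdot\nabla_xv_{x,j})\eval+\zeta_j(\partial_yv_y-\zeta\cdot\nabla_xv_y)\eval$. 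Since $v_{x,j}$ and $v_y$ are harmonic in $\Omega$ with surface traces $V_j$ and $B$ respectively, the two brackets match $(G(\eta)V)_j$ and $G(\eta)B$ up to the correction coming from the mismatch between $v_{x,j}$, $v_y$ and the harmonic extensions of their traces satisfying the Neumann condition on $\Gamma$. Labeling this discrepancy $\gamma_j$, one obtains \eqref{eq:zeta}.

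The main obstacle is the $H^{s-\mez}$ bound \eqref{p42.1} for $\gamma$: each of $\nabla B$, $(\nabla V)\zeta$, $G(\eta)V$ and $\zeta\,G(\eta)B$ sits only in $H^{s-1}$, so the half-derivative gain in $\gamma$ must be extracted from cancellation. This is where the elliptic factorization of Proposition \ref{ellip:regu} enters: the estimate \eqref{gain:w}, applied to the velocity potential $\tilde\phi$ in the straightened coordinates, provides a half-derivative gain on $\partial_z\tilde\phi-T_A\tilde\phi$, which is precisely the regularity gain needed to close the bound. Combining this with the paraproduct estimates of Theorem \ref{pproduct}, the tame composition estimate \eqref{esti:F(u)} applied to the coefficients $\zeta_1,\zeta_2$ of \eqref{zeta12}, and the paralinearization estimate \eqref{estR:ABZ} for $R(\eta)=G(\eta)-T_\lambda$ then yields the desired bound on $\gamma$.
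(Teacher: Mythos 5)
Your algebraic derivation of \eqref{eq:B}, \eqref{eq:V}, \eqref{eq:zeta} is correct and matches the construction recalled in the remark after the proposition: in particular you correctly identify $\gamma^j$ as $(\partial_y-\nabla\eta\cdot\nabla)H^j\eval$ where $H^j=v_{x,j}-\theta^j$ (resp.\ $H^{d+1}=v_y-\theta^{d+1}$) is harmonic in $\Omega$ with zero Dirichlet trace on $\Sigma$ but nontrivial Neumann data on $\Gamma$, and you correctly diagnose that the substance of the statement is the half-derivative gain in \eqref{p42.1}.

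However, your source for that gain is misidentified. The estimate \eqref{gain:w}, applied to $\tilde\phi$, is what underlies the paralinearization bound \eqref{estR:ABZ} for $R(\eta)=G(\eta)-T_\lambda$; it is a statement about the Dirichlet--Neumann operator acting on a single boundary value, not about $\gamma$, which is the mismatch between two different harmonic extensions and is not a paralinearization remainder of anything. The gain in $\gamma$ actually comes from the vanishing of $H^j$ on $\Sigma$: since $\widetilde H^j$ solves the straightened elliptic equation \eqref{eq:elliptic} with boundary value $f=0$ and source $F_0=0$, part~1 of Proposition~\ref{ellip:regu} gives
\[
\lA \nabla_{x,z}\widetilde H^j\rA_{X^{s-\mez}([z_0,0])}\le\cF\bigl(\lA\eta\rA_{H^{s+\mez}}\bigr)\lA\nabla_{x,z}\widetilde H^j\rA_{X^{-\mez}([z_1,0])},
\]
and the right-hand side is controlled by the variational estimate \eqref{est:vari}. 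This places $\nabla_{x,z}\widetilde H^j|_{z=0}$ in $H^{s-\mez}$, and since $\gamma^j=(\zeta_1\partial_z-\zeta_2\cdot\nabla_x)\widetilde H^j|_{z=0}$ with $\zeta_1-h^{-1},\zeta_2\in H^{s-\mez}$ by \eqref{3.56bis}, the Bony decomposition together with the paraproduct estimates of Theorem~\ref{pproduct} close \eqref{p42.1}. This is exactly the argument carried out in Step~1 of Lemma~\ref{Keps:gamma}, which proves \eqref{gamma:1}. The factorization \eqref{gain:w} is neither used nor sufficient: applied to $\tilde\phi$, $\tilde\theta^j$, $\tilde\theta^{d+1}$ separately it produces the remainders $R(\eta)\psi$, $R(\eta)V$, $R(\eta)B$, and no cancellation among those produces the extra half derivative you need for $\gamma$.
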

\begin{rema}
For later use, we recall how $\gamma$ is determined.  Suppose that $V=(V^i),~i=\overline{1,d}$ and let $\theta^i,\Phi$ be the variational solutions of the following problems
  \begin{align}
 \Delta_{x,y}\theta^i &=0 \text{ in } \Omega, \quad \theta_i\arrowvert_{y=\eta} = V^i,\quad \frac{\partial \theta_i}{\partial \nu}\arrowvert_ \Gamma = 0, \quad i= \overline{1,d}, \label{eq:theta}\\
 \Delta_{x,y}\theta^{d+1} &=0 \text{ in } \Omega, \quad \theta^{d+1}\arrowvert_{y=\eta} = B, \quad \frac{\partial \theta_{d+1}}{\partial \nu}\arrowvert_ \Gamma = 0, \label{eq:theta0}\\
   \Delta_{x,y}\Phi&=0 \text{ in } \Omega, \quad \Phi\arrowvert_{y=\eta} = \psi, \quad \frac{\partial \Phi}{\partial \nu}\arrowvert_ \Gamma = 0.
\end{align}
   Then, define
   \begin{align}
 \gamma^i &:= (\partial_y - \nabla_x \eta \cdot \nabla_x)H^i\arrowvert_{y=\eta}, \quad H^i =  \partial_i \Phi - \theta^i, \quad i=1, \ldots,d \label{Ri}\\
   \gamma^{d+1} &:= (\partial_y - \nabla_x \eta \cdot \nabla_x)H^{d+1}\arrowvert_{y=\eta}, \quad H^{d+1}=  \partial_y \Phi - \theta^{d+1}\label{Rd+1}
 \end{align}
and $$\gamma:=(\gamma^i)_{i=\overline{1,d}}+\zeta \gamma^{d+1}.$$
In the case~$\Gamma = \emptyset$, one can see, at least formally, that~$\gamma =0$. The remainder $\gamma$ thus comes from the topography. 
\end{rema}
\begin{nota}
Set 
\[
\mathcal{M}_s(T)\defn \sup_{\tau\in [0,T]}
\lA (\eta(\tau)), \psi(\tau),B(\tau),V(\tau)\rA_{H^{s+\mez}\times H^{s+\mez} \times H^s\times H^s}.
\]
From now on, we write $A\les B$ if there exists a constant $C>0$ depending on $\mathcal{M}_s(T)$ such that $A\le CB$.
\end{nota}
The following lemma gives us an estimate for $K_{\eps}\gamma$ in the $H^{s-\mez}$ norm.
\begin{lemm}
There holds for all $t\in [0, T]$ that
\[
\lA K_{\eps}\gamma(t)\rA_{H^{s-\mez}}\les o(1)_{\eps\to 0}+\lA K_{\eps}U(t)\rA_{Z^s}.
\]
\label{Keps:gamma}
\end{lemm}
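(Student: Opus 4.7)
The strategy is to mirror the proof of Proposition \ref{DNest} and its auxiliary Lemma \ref{Kepsv}, exploiting the fact that the auxiliary functions $H^i$ entering the definition of $\gamma$ are harmonic functions of the same elliptic type as the velocity potential, with the favourable feature of carrying zero Dirichlet data on the free surface. I begin by splitting $\gamma=(\gamma^i)_{i=1,\dots,d}+\zeta\,\gamma^{d+1}$, where $\zeta=\nabla_x\eta$.

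For the product $K_\eps(\zeta\,\gamma^{d+1})$, the plan is to apply the Bony decomposition $\zeta\,\gamma^{d+1}=T_\zeta\gamma^{d+1}+T_{\gamma^{d+1}}\zeta+R(\zeta,\gamma^{d+1})$, commute $K_\eps$ into each factor, and absorb the commutator terms by Lemma \ref{Keps}\,(iii). Combined with the bound \eqref{p42.1} for $\gamma^{d+1}$ in $H^{s-\mez}$ and the trivial inequality $\lA K_\eps\zeta\rA_{H^{s-\mez}}\le \lA K_\eps\eta\rA_{H^{s+\mez}}$, this piece is already put into the form $o(1)_{\eps\to 0}+\lA K_\eps U\rA_{Z^s}$.

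For the main contribution $K_\eps\gamma^i$ (and analogously for $K_\eps\gamma^{d+1}$), I would straighten the free boundary and work with $w^i:=\widetilde{H}^i$ on $\widetilde\Omega=\xR^d\times(-1,0)$. The crucial structural fact is that $H^i|_\Sigma=0$: indeed $(\partial_i\Phi)|_\Sigma=V^i=\theta^i|_\Sigma$ by definition of $V$, so $w^i|_{z=0}=0$. Using \eqref{lambda} one rewrites $\gamma^i=(\Lambda_1 w^i-\nabla_x\eta\cdot\Lambda_2 w^i)|_{z=0}$, where $w^i$ satisfies the same elliptic equation as $v$ in \eqref{eq:v} with the same coefficients $\alpha,\beta,\gamma$ and zero Dirichlet data at $z=0$. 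Commuting $K_\eps$ with this equation produces source terms $[K_\eps,\alpha]\Delta w^i$, $[K_\eps,\beta]\nabla_x\partial_z w^i$, $[K_\eps,\gamma]\partial_z w^i$, each of which is controlled in $Y^{s-\mez}(J)$ by $o(1)_{\eps\to 0}+\lA K_\eps\eta\rA_{H^{s+\mez}}$ through the very same calculations as in Lemma \ref{Kepsv}. An application of Proposition \ref{ellip:regu}\,(2) to $K_\eps w^i$ with $\sigma=s-1,\eps=\mez$, together with the fact that $K_\eps w^i|_{z=0}=0$ (so $T_A K_\eps w^i|_{z=0}=0$), then bounds $\partial_z K_\eps w^i|_{z=0}$ in $H^{s-\mez}$ by the right-hand side of the lemma. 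A final Bony decomposition of $\nabla_x\eta\cdot \Lambda_2 w^i$ combined with Lemma \ref{Keps}\,(iii) handles the second piece of $\gamma^i$, isolating a single $K_\eps\eta$ paraproduct and leaving only commutator remainders of size $o(1)_{\eps\to 0}$.

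The main technical obstacle I anticipate is the low-regularity variational control of $K_\eps w^i$ needed to feed Proposition \ref{ellip:regu}. Since no regularity is assumed on the bottom $\Gamma$, one cannot differentiate the bottom data directly; however, Proposition \ref{ellip:regu}\,(1) applied with $\sigma=0$ yields $\lA \nabla_{x,z} K_\eps w^i\rA_{X^{-\mez}(J)}\les \eps^{\mez}\lA \nabla_{x,z} w^i\rA_{X^0(J)}$, exactly as in \eqref{normal:2}, and the right-hand side is controlled by the variational inequality \eqref{est:vari} applied to $H^i$ itself (or equivalently to $\partial_i\Phi$ and $\theta^i$). Gathering the above ingredients completes the proof.
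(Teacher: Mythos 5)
Your proposal follows essentially the same route as the paper: split off the product $\zeta\,\gamma^{d+1}$, straighten the boundary, observe that $H^i\arrowvert_\Sigma=0$ so $\widetilde H^i\arrowvert_{z=0}=0$, commute $K_\eps$ with the elliptic equation, and feed the commutator source terms into the elliptic regularity of Proposition~\ref{ellip:regu} exactly as in Lemma~\ref{Kepsv}, with a final Bony decomposition of the outer coefficients $\zeta_1,\zeta_2$. Your use of Proposition~\ref{ellip:regu}\,(2) (via $(\partial_z-T_A)K_\eps w^i$) in place of the paper's direct use of part~(1) is an innocuous variation, since the vanishing of $K_\eps w^i\arrowvert_{z=0}$ kills the $T_A$ term anyway, and the zero-Dirichlet observation you highlight is indeed the key structural fact.

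There is one imprecision worth flagging. You assert that $\lA \nabla_{x,z} w^i\rA_{X^0(J)}$ (equivalently the $X^{-\mez}$ norm near the bottom that seeds Proposition~\ref{ellip:regu}) is ``controlled by the variational inequality \eqref{est:vari} applied to $H^i$ itself (or equivalently to $\partial_i\Phi$ and $\theta^i$).'' Estimate \eqref{est:vari} applies to variational solutions of the Dirichlet problem \eqref{v} with homogeneous Neumann data on $\Gamma$; it applies to $\theta^i$ (with $f=V^i$) and to $\Phi$ (with $f=\psi$), but \emph{not} to $H^i$ or to $\partial_i\Phi$, since $\partial_n(\partial_i\Phi)\arrowvert_\Gamma$ is not zero and, with no regularity assumed on $\Gamma$, is not even well defined. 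This is precisely why the paper splits $\widetilde H^i=\widetilde{\partial_i\Phi}-\widetilde{\theta^i}$, applies \eqref{est:vari} only to $\theta^i$, and controls $\widetilde{\partial_i\Phi}$ through the chain rule \eqref{tildefo} combined with Proposition~\ref{ellip:regu} applied to $\widetilde\Phi$. Fixing this requires no new idea, only replacing your appeal to \eqref{est:vari} by the paper's two-piece argument; the rest of your proposal stands.
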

\begin{proof}
Remark that the product term $\zeta \gamma^{d+1}$ can be handled by using the Bony decomposition. The estimates for $\gamma^i$ are completely similar for $i=1,...,d+1$. For simplicity in notation we shall drop the upper index $i$ and a fixed time $t\in [0, T]$ in this proof. According to Notation \ref{tilde}, we have
\begin{equation}\label{formula:gamma}
 \gamma= \widetilde\gamma \arrowvert_{z=0}= \Big(\frac{1+ \vert \nabla_x \rho \vert^2}{\partial_z \rho} \partial_z - \nabla_x \rho\cdot \nabla_x \Big) \widetilde{H}_j\arrowvert_{z=0}= (\zeta_1\partial_z  - \zeta_2\cdot \nabla_x) \widetilde{H}_j\arrowvert_{z=0},
 \end{equation}
where $\zeta_1,~\zeta_2$ are defined in \eqref{zeta12}.\\
{\it Step 1.} We prove in this step that $\nabla_{x,z} \widetilde{H}\arrowvert_{z=0}\in H^{s-\mez}$ and moreover,
\begin{equation}\label{gamma:1}
\lA\nabla_{x,z} \widetilde{H}\arrowvert_{z=0}\rA_{H^{s-\mez}}\les \|V\|_{H^s}+\|\psi\|_{H^{s+\mez}}.
\end{equation}
Since $\Delta H_j=0$ in $\Omega$ and $H_j=0$ on the free surface, applying the elliptic regularity Proposition \ref{ellip:regu} with  fixed $-1<z_1<z_0<0$,
 we deduce that
$$
\lA \nabla_{x,z}  \widetilde{H}\rA_{X^{s-\mez}([z_0,0])}
\le \mathcal{F}(\| \eta \|_{H^{s+\mez}})\lA \nabla_{x,z}  \widetilde{H}\rA_{X^{-\mez}([z_1,0])}.
$$
%where $X^\mu(I)\defn C^0_z(I;H^{\mu}({\mathbf{R}}^d))\cap L^2_z(I;H^{\mu+\mez}({\mathbf{R}}^d))$.
Clearly,
\[
\lA \nabla_{x,z}  \widetilde{H}\rA_{X^{-\mez}([z_1,0])}\le \lA \nabla_{x,z}  \widetilde{\partial_i\Phi}\rA_{X^{-\mez}([z_1,0])}+\lA \nabla_{x,z}  \widetilde{\theta^i}\rA_{X^{-\mez}([z_1,0])}.
\]
 By virtue of the (variational) estimate \eqref{est:vari}, we have the bound
\[
\lA \nabla_{x,z}  \widetilde{\theta^i}\rA_{X^{-\mez}([-1,0])}\les  \|V\|_{H^s}.
\]
Next, using the change rule we get 
\begin{equation}\label{tildefo}
\widetilde{\partial_i \Phi} = (\partial_i - \frac{\partial_i \rho}{\partial_z \rho}\partial_z) \widetilde{\Phi}.
\end{equation}
Hence, in view of Proposition \ref{ellip:regu}
\[
\lA \nabla_{x}  \widetilde{\partial_i\Phi}\rA_{X^{-\mez}([z_1,0])}\le \lA \widetilde{\partial_i\Phi}\rA_{X^{\mez}([z_1,0])}\les  \lA \nabla_{x,z}\widetilde{\Phi} \rA_{X^{s-\mez}([z_1,0])}\les  \lA \psi \rA_{H^{s+\mez}}.
\]
Finally, using \eqref{tildefo} and equation \eqref{eq:v} we deduce that
\[
\lA \partial_z \widetilde{\partial_i\Phi}\rA_{X^{-\mez}([z_1 ,0])}\les  \lA \psi \rA_{H^{s+\mez}}.
\]
We finish step 1.\\
{\it Step 2.} In this step, all the values are evaluated at $z=0$. Using \eqref{formula:gamma} and the Bony decomposition, we write with $\zeta_1'=\zeta_1-h^{-1}$
\[
\gamma=h^{-1}\partial_z \widetilde{H} +T_{\zeta'_1}\partial_z\widetilde{H}+T_{\partial_z\widetilde{H}}\zeta_1+R(\zeta'_1, \partial_z\widetilde{H})-T_{\zeta_2}\cdot\nabla_x\widetilde{H}-T_{\nabla_x\widetilde{H}}\cdot\zeta_2-R(\nabla_x\widetilde{H}, \zeta_2).
\]
Since $\zeta'_1,~ \zeta_2, ~\nabla_{x,z}\widetilde{H}\arrowvert_{z=0}\in H^{s-\mez}$ and $s-\mez+s-\mez-\frac{d}{2}>s-\mez$, we can apply \eqref{Bony} and Lemma \ref{Keps} $(ii)$ to get
\begin{equation}\label{gamma:2}
\lA K_{\eps}R(\zeta'_1, \partial_z\widetilde{H})\rA_{H^{s-\mez}}+\lA K_{\eps}R( \zeta_2, \nabla_x\widetilde{H})\rA_{H^{s-\mez}}\les o(1)_{\eps\to 0}.
\end{equation}
Using estimates in part $(iii)$ of the proof of Proposition \ref{DNest}, we obtain that
\begin{equation}\label{gamma:3}
\lA K_{\eps}T_{\partial_z\widetilde{H}}\zeta'_1\rA_{H^{s-\mez}}+\lA K_{\eps}T_{\nabla_x\widetilde{H}}\zeta_2\rA_{H^{s-\mez}}\les o(1)_{\eps\to 0}+\lA K_{\eps}U\rA_{Z^s}.
\end{equation}
Now we write
\[
K_{\eps}(T_{\zeta'_1}\partial_z\widetilde{H}-T_{\zeta_2}\nabla_x\widetilde{H})=[K_{\eps},T_{\zeta'_1}]\partial_z\widetilde{H}+T_{\zeta'_1}K_{\eps}\partial_z\widetilde{H}-[K_{\eps}, T_{\zeta_2}]\cdot\nabla_x\widetilde{H}-T_{\zeta_2}\cdot K_{\eps}\nabla_x\widetilde{H}.
\]
Lemma \ref{Keps} $(iii)$ gives
\[
\lA[K_{\eps},T_{\zeta'_1}]\partial_z\widetilde{H} \rA_{H^{s-\mez}}+\lA [K_{\eps}, T_{\zeta_2}]\cdot\nabla_x\widetilde{H}\rA_{H^{s-\mez}}\les o(1)_{\eps\to 0}.
\]
Consequently,
\begin{align}\label{gamma:4}
\lA K_{\eps}(T_{\zeta'_1}\partial_z\widetilde{H}-T_{\zeta_2}\cdot\nabla_x\widetilde{H})\rA_{H^{s-\mez}}&\les o(1)_{\eps\to 0}+\lA T_{\zeta'_1}K_{\eps}\partial_z\widetilde{H}\rA_{H^{s-\mez}}+\lA T_{\zeta_2}\cdot K_{\eps}\nabla_x\widetilde{H}\rA_{H^{s-\mez}}\\\nonumber
&\les o(1)_{\eps\to 0}+\lA K_{\eps}\partial_z\widetilde{H}\rA_{H^{s-\mez}}+\lA K_{\eps}\nabla_x\widetilde{H}\rA_{H^{s-\mez}}.
\end{align}
We remark that as in the proof of Lemma \ref{Kepsv}, $K_{\eps}\widetilde{H}$ and $\widetilde{H}$ satisfy almost the same equation (modulo a quantity bounded by $o(1)_{\eps\to 0}+\lA K_{\eps}U\rA_{Z^s}$) so we have as in \eqref{gamma:1}
\begin{equation}\label{gamma:5}
\lA K_{\eps}\nabla_{x,z} \widetilde{H}\rA_{H^{s-\mez}}\les o(1)_{\eps\to 0}+\lA K_{\eps}U\rA_{Z^s}.
\end{equation}
In view of \eqref{gamma:4}, one gets
\[
\lA K_{\eps}(T_{\zeta'_1}\partial_z\widetilde{H}-T_{\zeta_2}\cdot\nabla_x\widetilde{H})\rA_{H^{s-\mez}}\les o(1)_{\eps\to 0}+\lA K_{\eps}U\rA_{Z^s}.
\]
Putting together all estimates above we obtain the desired result.
\end{proof}
Introduce $\lDx{s}:=(I-\Delta)^{\frac{s}{2}}$ and
\begin{equation}\label{defiUszetas}
W_s \defn \langle D_x\rangle^s K_\eps^2 V+T_\zeta \lDx{s}K_\eps^2\B,\quad \zeta_s\defn K_\eps^2\lDx{s}\zeta.
\end{equation}
System \eqref{eq:B}-\eqref{eq:zeta} is paralinearized as follows (see Proposition 4.8, \cite{ABZ}).
\begin{prop}\label{prop:csystem2}
We have
\begin{align}
&(\partial_t+T_V\cdot\partialx)W_s+T_\ma\zeta_s=f_1,\label{premiere}\\
&(\partial_{t}+T_V\cdot\partialx)\zeta_s=T_\lambda W_s +f_2,\label{seconde}
\end{align}
 where, for each time~$t\in [0,T]$, there holds
\begin{equation}\label{p49:estf}
\lA (f_1(t),f_2(t))\rA_{L^2\times H^{-\mez}}
\les o(1)_{\eps\to 0}+\lA K^\eps U\rA_{Z^s}.
\end{equation}
\end{prop}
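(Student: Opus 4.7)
The plan is to follow the proof of Proposition~4.8 in \cite{ABZ} — which establishes the same system but with $K_\eps^2$ replaced by the identity and with the looser bound $\cF(\mathcal{M}_s(T))$ on $(f_1,f_2)$ — and to revisit each commutator and paralinearization remainder so as to refine the bound to $o(1)_{\eps\to 0}+\lA K_\eps U\rA_{Z^s}$. The key technical ingredients have already been prepared: Lemma~\ref{lemma:K2} handles commutators of $K_\eps^2\langle D_x\rangle^s$ with paradifferential operators, Proposition~\ref{DNest} refines the Dirichlet-Neumann remainder estimate, Lemma~\ref{Keps:gamma} does the same for the topography term $\gamma$, and Theorem~\ref{theo:sc} together with Theorem~\ref{pproduct} control the remaining paraproduct/remainder pieces.

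For the equation on $\zeta_s$, I apply $K_\eps^2\lDx{s}$ to \eqref{eq:zeta}. For the transport term $V\cdot\nabla\zeta$, I use a Bony decomposition $V\cdot\nabla\zeta=T_V\cdot\nabla\zeta+T_{\nabla\zeta}\cdot V+R(V,\nabla\zeta)$; the first piece produces $T_V\cdot\nabla\zeta_s$ up to the commutator $[K_\eps^2\lDx{s},T_V]\cdot\nabla\zeta$, which Lemma~\ref{lemma:K2}$(i)$ controls in $L^2$ with the desired bound (note $V\in H^s\subset\Gamma^0_{1+\rho_0}$), while the last two pieces are handled by \eqref{boundpara}, \eqref{Bony} and Lemma~\ref{Keps}$(ii)$. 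For $G(\eta)V$, I write $G(\eta)=T_\lambda+R(\eta)$; the contribution of $R(\eta)V$ is dealt with by Proposition~\ref{DNest}, and $K_\eps^2\lDx{s}T_\lambda V=T_\lambda\lDx{s}K_\eps^2V+[K_\eps^2\lDx{s},T_\lambda]V$ with the commutator again absorbed by Lemma~\ref{lemma:K2}. The term $\zeta\,G(\eta)B$ is split via Bony as $T_\zeta G(\eta)B+T_{G(\eta)B}\zeta+R(\zeta,G(\eta)B)$; the key point is that $T_\zeta T_\lambda B$ combines with $T_\lambda T_\zeta\lDx{s}K_\eps^2 B$ (arising from $T_\lambda W_s$) modulo a symbolic-calculus error from Theorem~\ref{theo:sc}$(ii)$, which is of order strictly less than one and therefore contributes to $f_2$ in $H^{-\mez}$. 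Finally, $K_\eps^2\lDx{s}\gamma$ is $O(o(1)+\lA K_\eps U\rA_{Z^s})$ in $H^{-\mez}$ by Lemma~\ref{Keps:gamma}.

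For the equation on $W_s$, I differentiate in time and substitute \eqref{eq:B}--\eqref{eq:V}. The $V$-equation contributes $-\lDx{s}K_\eps^2(V\cdot\nabla V)-\lDx{s}K_\eps^2(a\zeta)$; applying Bony to $a\zeta$ yields the main term $-T_a\zeta_s=-T_a K_\eps^2\lDx{s}\zeta$, up to commutator errors controlled by Lemma~\ref{lemma:K2}$(i)$ (here $a\in H^{s-\mez}\subset\Gamma^0_{1+\rho_0}$ using the Taylor sign condition) and up to the paraproduct $T_{a\zeta}$-type remainders treated by Theorem~\ref{pproduct}. For $T_\zeta\lDx{s}K_\eps^2 B$, I apply \eqref{eq:B}: the $-g$ piece vanishes since $g$ is constant, the $a$ piece combines with the $T_a\zeta_s$ structure above, and the transport piece $-T_\zeta\lDx{s}K_\eps^2(V\cdot\nabla B)$ is reduced to $T_V\cdot\nabla(T_\zeta\lDx{s}K_\eps^2 B)$ plus acceptable commutators using $[T_\zeta,T_V]$ (gain of one derivative on smoother of the two) together with Lemma~\ref{lemma:K2}. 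The time derivative $T_{\partial_t\zeta}\lDx{s}K_\eps^2 B$ is harmless because $\partial_t\zeta$ is bounded in $H^{s-\mez}$ and can be placed via \eqref{niS} into $L^2$.

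The main obstacle is that the ``coefficients'' $V,a,\lambda,\zeta$ have regularity only $H^{s-\mez}$ (equivalently barely $C^{1+\rho_0}_*$), so the gain from symbolic calculus in Theorem~\ref{theo:sc}$(ii)$ is just enough to yield an order strictly less than one; this is exactly why I pay $K_\eps^2$ (losing one $K_\eps$ in the commutator on one factor and keeping one $K_\eps$ on the other, as reflected in Lemma~\ref{lemma:K2}$(i)$) rather than a single $K_\eps$ as in the heuristic strategy of Section~\ref{idea}. The second delicate point is the cascading use of Proposition~\ref{DNest} on both $G(\eta)V$ and $G(\eta)B$, and of Lemma~\ref{Keps:gamma} on $\gamma$: these are precisely the places where the naïve bound $\cF(\mathcal{M}_s(T))$ from \cite{ABZ} is too weak, and where the refined ``almost linear in the highest norm'' structure we have established in Sections~3.1--3.2 is essential to close the estimate.
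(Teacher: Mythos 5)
Your overall strategy --- apply $K_\eps^2\lDx{s}$ to the reformulated system, Bony--decompose and peel off the principal paradifferential parts, control the resulting commutators with Lemma~\ref{lemma:K2}, and feed the Dirichlet--Neumann and topography remainders through Proposition~\ref{DNest} and Lemma~\ref{Keps:gamma} --- is exactly the paper's. The one organizational difference is that the paper first establishes Lemma~\ref{para:lemm1} (a paralinearization of \eqref{eq:B}--\eqref{eq:V} \emph{before} applying $L=K_\eps^2\lDx{s}$, following Lemma~4.9 of \cite{ABZ}) and only afterwards commutes with $L$, whereas you commute immediately; this is a harmless reordering of the same computation.

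However, there are two genuine gaps in your treatment of the $W_s$ equation. First, the inclusion $a\in H^{s-\mez}\subset\Gamma^0_{1+\rho_0}$ you invoke is false: $H^{s-\mez}(\xR^d)$ embeds into $C^{\alpha}_*$ only for $\alpha< s-\mez-\tfrac d2$, and with $\rho_0<\mez\min\{1, s-1-\tfrac d2\}$ one has $s-\mez-\tfrac d2>\mez+2\rho_0$ but in general $s-\mez-\tfrac d2\not>1+\rho_0$. So the Taylor coefficient is only in $\Gamma^0_{\mez+\rho_0}$, the hypothesis of Lemma~\ref{lemma:K2}$(i)$ is not met, and one must instead apply Lemma~\ref{lemma:K2}$(ii)$ with $\alpha=\mez+\rho_0$ (as the paper does), which fortunately still yields the bound $\eps^{\rho_0}\|\zeta\|_{H^{s-\mez}}=o(1)_{\eps\to 0}$. (The regularity of $a$ comes from Proposition~4.6 of \cite{ABZ}, not the Taylor sign condition, which only gives positivity.) Second, and more serious, your claim that ``$\partial_t\zeta$ is bounded in $H^{s-\mez}$'' is wrong by half a derivative: $\partial_t\eta=G(\eta)\psi\in H^{s-\mez}$, hence $\partial_t\zeta=\nabla\partial_t\eta\in H^{s-\tdm}$, which lies in $L^\infty$ only for $s>\tdm+\tfrac d2$. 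For $s$ close to $1+\tfrac d2$ the attempted bound $\|T_{\partial_t\zeta}\lDx{s}K_\eps^2 B\|_{L^2}$ via~\eqref{niS} fails to land in $L^2$. The paper sidesteps this precisely by \emph{not} isolating $T_{\partial_t\zeta}LB$: it keeps the commutator $[T_\zeta,\partial_t+T_V\cdot\partialx]LB$ whole and invokes Lemma~2.15 and identity~4.13 of~\cite{ABZ}, which replace the needed control of $\|\partial_t\zeta\|_{L^\infty}$ by control of the \emph{transported} quantity $\|\partial_t\zeta+V\cdot\nabla\zeta\|_{L^\infty}$; the latter is bounded thanks to equation~\eqref{eq:zeta}, since $G(\eta)V+\zeta G(\eta)B+\gamma\in H^{s-1}\subset L^\infty$. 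Without this cancellation your estimate of the time-derivative commutator does not close at the stated regularity threshold.
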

\begin{lemm} \label{para:lemm1}We have
\begin{equation}\label{parastep1a}
(\partial_t+T_V\cdot\partialx)V+T_{\ma}\zeta+T_\zeta (\partial_t+T_V\cdot\partialx)B=h_1
\end{equation}
with
\[
\lA K_{\eps}h_1(t)\rA_{H^s}
\les o(1)_{\eps\to 0}+\lA K_{\eps}U(t)\rA_{Z^s}.
\]
\end{lemm}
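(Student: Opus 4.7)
The plan is to mimic the paralinearization argument of \cite[Proposition~4.8]{ABZ} while carefully tracking the high-frequency part of each remainder so that Lemma~\ref{Keps} applies. My first step is to derive an explicit formula for $h_1$. Substituting \eqref{eq:V} and \eqref{eq:B} into \eqref{parastep1a} to eliminate the material derivatives $(\partial_t+V\cdot\nabla)V=-a\zeta$ and $(\partial_t+V\cdot\nabla)B=a-g$, and observing that $T_\zeta 1 = 0$ (the cutoff $\psi$ in \eqref{cond.psi} vanishes near the origin, so $T_\zeta$ annihilates constants), a short computation combined with the Bony decomposition $a\zeta = T_a\zeta + T_\zeta a + R(a,\zeta)$ gives
\[
h_1 \;=\; -(V-T_V)\cdot\nabla V \;-\; T_\zeta\bigl((V-T_V)\cdot\nabla B\bigr) \;-\; R(a,\zeta).
\]
A further Bony decomposition then rewrites $(V-T_V)\cdot\nabla V = T_{\nabla V}\cdot V + R(V,\nabla V)$ and similarly for $B$, so only paraproducts and genuine Bony remainders remain.

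Next, I would estimate $\|K_\eps h_1\|_{H^s}$ piece by piece, following the same recipe already used in the proofs of Proposition~\ref{DNest} and Lemma~\ref{Kepsv}. For the paraproduct piece $T_{\nabla V}\cdot V$, I split $K_\eps T_{\nabla V}V = [K_\eps,T_{\nabla V}]V + T_{\nabla V}K_\eps V$; since $s > \tfrac{3}{2}+\tfrac{d}{2}$, the Sobolev embedding $H^{s-1}\hookrightarrow C^{\rho}_*$ for some $\rho>0$ makes $\nabla V$ a symbol in $\Gamma^0_\rho$ of norm $\les 1$, so Lemma~\ref{Keps}(iii) bounds the commutator by $o(1)_{\eps\to 0}$ and Theorem~\ref{theo:sc}(i) bounds $T_{\nabla V}K_\eps V$ by $\|K_\eps V\|_{H^s}\le\|K_\eps U\|_{Z^s}$. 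The $B$-paraproduct $T_\zeta T_{\nabla B}\cdot V$ is handled identically, after first passing $K_\eps$ through the outer $T_\zeta$ via Lemma~\ref{Keps}(iii) (valid because $\zeta\in H^{s-1/2}$ has the required positive Hölder regularity). For the Bony remainders $R(V,\nabla V)$ and $R(V,\nabla B)$, estimate \eqref{Bony} yields membership in $H^{2s-1-d/2}$, and since $2s-1-\tfrac{d}{2} > s$ strictly under $s > 1+\tfrac{d}{2}$, Lemma~\ref{Keps}(ii) converts this gain into an $\eps^\kappa$ factor.

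The principal delicacy is the Taylor-coefficient remainder $R(a,\zeta)$, because $a$ is defined only implicitly through the equations. I would invoke the ABZ bound $\|a\|_{H^{s-1/2}}\les 1$ (established in \cite[\S 4]{ABZ}); together with $\zeta\in H^{s-1/2}$, estimate \eqref{Bony} places $R(a,\zeta)$ in $H^{2s-1-d/2}$, which embeds strictly in $H^{s+\rho_0}$ under the working hypothesis $s > \tfrac{3}{2}+\tfrac{d}{2}$, so Lemma~\ref{Keps}(ii) finishes this term as well. Summing the three contributions yields the claimed bound on $\|K_\eps h_1\|_{H^s}$.
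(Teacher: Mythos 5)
Your proof is correct and follows essentially the same route as the paper: derive an explicit formula for $h_1$ from \eqref{eq:V}--\eqref{eq:B}, Bony-decompose the nonlinear pieces, and then bound each piece by commuting $K_\eps$ through paraproducts via Lemma~\ref{Keps} $(ii)$--$(iii)$ and Theorem~\ref{theo:sc}, with \eqref{Bony} handling the genuine Bony remainders. Two small points worth noting. First, your formula for $h_1$ carries the term $T_\zeta\bigl((V-T_V)\cdot\nabla B\bigr)$, which is indeed forced by substituting \eqref{eq:B} into \eqref{parastep1a}; the paper's quoted formula (imported from ABZ's Lemma~4.9) lists only $\sum_j(T_{\partial_jV}V+R(\partial_jV,V_j))+R(a-g,\zeta)+g\zeta-T_g\zeta$ and appears to omit this $B$-contribution, so your version is the more careful one, and your treatment of that extra term (commuting $K_\eps$ past $T_\zeta$, then past $T_{\nabla B}$, using that $\nabla B\in H^{s-1}\hookrightarrow C^{\rho_0}_*$ exactly as $\nabla V$) is sound. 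Second, a minor imprecision: $a$ itself is not in $H^{s-1/2}$ (it does not decay); the ABZ bound is on $a-g$, and one should write $R(a,\zeta)=R(a-g,\zeta)+(g\zeta-T_g\zeta)$, noting that $K_\eps$ annihilates the low-frequency piece $g\zeta-T_g\zeta=g(1-\psi(D_x))\zeta$ for small $\eps$; with that correction the $H^{2s-1-d/2}$ gain and Lemma~\ref{Keps}(ii) give the $o(1)_{\eps\to0}$ factor exactly as you say (and $s>1+\tfrac d2$ already suffices here, so the stronger $s>\tfrac32+\tfrac d2$ you invoke is not needed for this step).
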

\begin{proof}
It follows from the proof of Lemma 4.9, \cite{ABZ} that
\[
h_1=\sum_{j=1}^d(T_{\partial_jV}V+R(\partial_jV, V_j))+R(a-g, \zeta)+g\zeta-T_g\zeta.
\]
Firstly, since $V_j\in H^s$ applying (\ref{Bony}) and Lemma \ref{Keps} $(ii)$ gives
\[
\lA K_{\eps}R(\partial_jV, V_j))\rA_{H^s}\les o(1)_{\eps\to 0}.
\]
Similarly, since $\zeta\in H^{s-\mez}$ and from Proposition 4.6 in \cite{ABZ} $a-g\in H^{s-\mez}(\xR^d)$ we deduce that
\[
\lA K_{\eps}R(a-g, \zeta)\rA_{H^s}\les o(1)_{\eps\to 0}.
\]
It is easy to see that
\[
\lA K_{\eps}(g\zeta-T_g\zeta)\rA_{H^s}\le \|K_{\eps}\eta\|_{H^{s+\mez}}.
\]
Finally, we write
\[
K_{\eps}T_{\partial_jV}V=[K_{\eps}, T_{\partial_jV}]V+T_{\partial_jV}K_{\eps}V
\]
and apply lemma \ref{Keps} $(iii)$ to derive that
\[
\lA K_{\eps}T_{\partial_jV}V\rA_{H^s}\les o(1)_{\eps\to 0}+\lA K_{\eps}V\rA_{H^s}.
\]
The lemma is proved.
\end{proof}
{\it Proof of \eqref{premiere}}\\
We commute ~\eqref{parastep1a} with~$L:=K_\eps^2\lDx{s}$  to obtain
\begin{equation}\label{paralemm2:1}
(\partial_t+T_V\cdot\partialx)LV+T_{\ma}L\zeta+T_\zeta L(\partial_t+T_V\cdot\partialx)B=Lh_1-k_1
\end{equation}
where
\[
k_1= \left[L, T_V\right]\nabla V+\left[L, T_\ma \right]\zeta+\left[L, T_{\zeta}\right](\partial_t+T_V\cdot\partialx)B.
\]
Applying Lemma \ref{lemma:K2} $(i)$ to $[L, T_V]\nabla V$ with $\alpha=1+\rho_0$ and $\mu=0$ we get
\begin{equation}\label{raralemm2:2}
\lA \left[L, T_V\right]\nabla V \rA_{L^2}\les o(1)_{\eps\to 0}+ \lA K_{\eps}V \rA_{H^{s}}.
\end{equation}
On the other hand, since $a\in C^{1/2+\rho_0}$ Lemma \ref{lemma:K2} $(ii)$ with $\beta=\mez,~\rho=\rho_0$  applied to $[L, T_{\ma}]\zeta$  yield
\begin{equation}\label{paralemm2:3}
\lA \left[L, T_\ma \right]\zeta  \rA_{L^2}\les o(1)_{\eps\to 0}.
\end{equation}
Now, remark that by Theorem \ref{pproduct} $v)$, $(T_V-V)\nabla B\in H^{s-\mez}$ and thus equation \eqref{eq:B} implies
\[
(\partial_t+T_V\cdot\partialx)B=(a-g)+(T_V-V)\nabla B\in H^{s-\mez}.
\]
Then since $\zeta\in H^{s-\mez}$ one gets similarly to \eqref{paralemm2:2} that
\[
\lA \left[ L, T_\zeta\right](\partial_t+T_V\cdot\partialx)B  \rA_{ L^2} \les o(1)_{\eps\to 0}.
\]
We thus obtain
\[
\lA k_1(t)\rA_{L^2} \les o(1)_{\eps\to 0}+\lA K_{\eps}U(t)\rA_{Z^s}.
\]
Now in \eqref{paralemm2:1} we commute $L$ with $\partial_t+T_V\cdot\nabla$ to get
\[
T_\zeta L(\partial_t+T_V\cdot\partialx)B=T_\zeta (\partial_t+T_V\cdot\partialx)LB+T_\zeta[L, T_V] \cdot\nabla B
\]
where the second term is estimated as in \eqref{paralemm2:2}. In the first term, one applies Lemma $2.15$, \cite{ABZ} and the identity $4.13$, \cite{ABZ} to get
\[
\lA [T_\zeta, \partial_t+T_V\cdot\partialx ]LB\rA_{L^2}\les \left(\lA \zeta\rA_{L^\infty}\lA V\rA_{C^{1+\rho_0}}+\lA \partial_t\zeta+V\nabla\zeta\rA_{L^\infty}\right)\lA LB\rA_{L^2}\les \lA K^2_\eps B\rA_{H^s}.
\]
Summing up, we have proved that
\begin{equation}\label{paralemm2:2}
(\partial_t+T_V\cdot\partialx) LV
+T_{\ma} L\zeta+(\partial_t+T_V\cdot\partialx)T_\zeta L B=h_2
\end{equation}
for some remainder~$h_2$ satisfying
\[ \lA h_2\rA_{L^2} \le  o(1)_{\eps\to 0}+\lA K_{\eps}U(t)\rA_{Z^s}.
\]
By definition \eqref{defiUszetas} we get \eqref{premiere}.\\
{\it Proof of \eqref{seconde}}\\
It follows from equation \eqref{eq:zeta} that
\begin{align}\label{writef2}
(\partial_{t}+T_V\cdot\partialx)\zeta &=G(\eta)V+ \zeta G(\eta)\B+ \gamma+(T_V-V)\cdot \nabla\zeta\\ \nonumber
&=G(\eta)V+ \zeta G(\eta)\B+ \gamma+T_{\nabla\zeta}\cdot V +\sum_{j=1}^d R(\partial_j\zeta,V_j).
\end{align}
The estimate for $\Vert K_\eps\gamma\Vert_{H^{s-\mez}}$ is provided by Lemma \ref{Keps:gamma}. Since $\partial_j\zeta\in H^{s-\frac{3}{2}}\subset C^{-\mez+\rho_0}_*$ we deduce from Lemma \ref{Keps} $(ii)$ and the paraproduct rule $(iii)$ of Theorem \ref{pproduct} that
\begin{equation}\label{f2:0}
\lA K_{\eps}T_{\nabla\zeta}\cdot V  \rA_{H^{s-\mez}}\les \eps^{\rho_0}\lA T_{\nabla\zeta}\cdot V \rA_{H^{s-\mez+\rho_0}}\les \eps^{\rho_0}\lA V \rA_{H^{s}}\les o(1)_{\eps\to 0}.
\end{equation}
Because $s-\frac{3}{2}+s-\frac{d}{2}>s-\mez$, we also have from \eqref{Bony} and Lemma \ref{Keps} $(ii)$ that
\[
\lA K_{\eps}R(\partial_j\zeta,V_j)\rA_{H^{s-\mez}}\les o(1)_{\eps\to 0}.
\]
Next, using the paralinearization of the Dirichlet-Neumann operator and the Bony's decomposition, we get
\begin{equation}\label{f2:1}
G(\eta)V+ \zeta G(\eta)\B = T_\lambda W  +F_2(\eta,V,B),
\end{equation}
with
\[
W:=V+T_{\zeta}B
\]
and
\begin{align*}\label{sy:F1}
F_2&=[T_\zeta,T_\lambda]B+R(\eta)V+\zeta R(\eta)B+(\zeta-T_\zeta)T_\lambda B\\
&=[T_\zeta,T_\lambda]B+R(\eta)V+T_{\zeta} R(\eta)B+T_{R(\eta)B}\zeta+R(\zeta, R(\eta)B)+T_{T_{\lambda B}}\zeta+R(T_{\lambda}B, \zeta).
\end{align*}
\hk $(i)$ The commutator~$ [T_\zeta,T_\lambda]$ is of order $0+1-(\mez+\rho_0)=\mez-\rho_0$. Thus, again by Lemma \ref{Keps} $(ii)$,
$$
\lA K_{\eps}[T_\zeta,T_\lambda]B\rA_{H^{s-\mez}}\les \eps^{\rho_0}\lA [T_\zeta,T_\lambda]B\rA_{H^{s-\mez+\rho_0}}\les \eps^{\rho_0}\lA B\rA_{H^{s}}\les o(1)_{\eps\to 0}.
$$
\hk $(ii)$ Writing $K_{\eps}T_{\zeta} R(\eta)B=[K_{\eps}, T_{\zeta}] R(\eta)B+T_{\zeta}K_{\eps} R(\eta)B$ then using Lemma \ref{Keps} $(iii)$ and Proposition \ref{DNest} for estimating the remainder in paralinearization of the Dirichlet-Neumann operator, we deduce that
\begin{align*}
\lA K_{\eps}R(\eta)V\rA_{H^{s-\mez}}+\lA K_{\eps}T_{\zeta} R(\eta)B\rA_{H^{s-\mez}}&\les o(1)_{\eps\to 0}+\lA K_{\eps}R(\eta)V\rA_{H^{s-\mez}}+\lA K_{\eps}R(\eta)B\rA_{H^{s-\mez}}\\
&\les o(1)_{\eps\to 0}+\lA K_{\eps}U\rA_{Z^s}.
\end{align*}
\hk  $(iii)$ Similarly, we also have
\[
\lA K_{\eps}T_{R(\eta)B}\zeta\rA_{H^{s-\mez}}+\lA K_{\eps}T_{T_\lambda B}\zeta\rA_{H^{s-\mez}}\les o(1)_{\eps\to 0}+\lA K_{\eps}U\rA_{Z^s}
\]
and
\[
\lA K_{\eps}R(\zeta, R(\eta)B)\rA_{H^{s-\mez}}+\lA K_{\eps}R(T_{\lambda}B, \zeta_j)\rA_{H^{s-\mez}}\les o(1)_{\eps\to 0}.
\]
Putting together the estimates in $(i)$, $(ii)$ and $(iii)$ we end up with
\begin{equation}\label{f2:2}
\lA K_{\eps}F_2\rA_{H^{s-\mez}}=T_{\lambda}W+k_2,
\end{equation}
with
\[
\lA K_{\eps}k_2\rA_{H^{s-\mez}}\les o(1)_{\eps\to 0}+\lA K_{\eps}U\rA_{Z^s}.
\]
Now using Lemma \ref{Keps:gamma} and \eqref{writef2}-\eqref{f2:2} we obtain
\begin{equation}\label{f2:4}
(\partial_{t}+T_V\cdot\partialx)\zeta=T_{\lambda}W+h_2
\end{equation}
with
\[
\lA K_{\eps}h_2\rA_{H^{s-\mez}}\les o(1)_{\eps\to 0}+\lA K_{\eps}U\rA_{Z^s}.
\]
Next, commuting \eqref{f2:4} with $K^2_\eps\langle D_x\rangle^s$  gives
\[
(\partial_{t}+T_V\cdot\partialx)\zeta_s=T_{\lambda}K^2_\eps\langle D_x\rangle^sW+[K^2_\eps\langle D_x\rangle^s, T_{\lambda}]W+K^2_\eps\langle D_x\rangle^sh_2-[K^2_\eps\langle D_x\rangle^s, T_V]\nabla\zeta.
\]
The two commutators on the right-hand side are bounded in $H^{-1/2}$ by virtue of Lemma \ref{lemma:K2}. Indeed, Lemma \ref{lemma:K2} $(ii)$ applied with $\mu=-\mez,~m=1,~\beta=\mez,~\rho=\rho_0$ yields
 \[
\lA [K^2_\eps\langle D_x\rangle^s, T_{\lambda}]W\rA_{H^{-\mez}}\les o(1)_{\eps\to 0}.
\]
On the other hand, Lemma \ref{lemma:K2} $(i)$ applied with $\mu=-\mez,~m=0,~\alpha=\rho_0$ gives
 \begin{equation}\label{technical:K2}
\lA [K^2_\eps\langle D_x\rangle^s, T_V]\nabla\zeta\rA_{H^{-\mez}}\les o(1)_{\eps\to 0}+\lA K_\eps \nabla\zeta\rA_{H^{s-\tdm}}\les o(1)_{\eps\to 0}+\lA K_\eps \eta\rA_{H^{s+\mez}}.
\end{equation}
This is the only technical point where we need Lemma \ref{lemma:K2} $(i)$, which requires $K^2_\eps$ and not only $K_\eps$.\\
Finally, we write
\[
K^2_\eps\langle D_x\rangle^sW=W_s+[K^2_\eps\langle D_x\rangle^s, T_\zeta]B
\]
and conclude by Lemma \ref{lemma:K2} $(ii)$ (applied with $\mu=\mez,~m=0,~\beta=\mez,~\rho=\rho_0$) that
\[
\lA T_\lambda [K^2_\eps\langle D_x\rangle^s, T_\zeta]B\rA_{H^{-\mez}}\les \lA [K^2_\eps\langle D_x\rangle^s, T_\zeta]B\rA_{H^{\mez}}\les o(1)_{\eps\to 0}.
\]
The proof of \eqref{seconde} is complete.\\
\hk Using the fact that the Taylor coefficient~$\ma$ is a positive function, the authors performed in \cite{ABZ} a
symmetrization of the non-diagonal part of the system~\eqref{premiere}--\eqref{seconde}. Given Proposition \ref{prop:csystem2}, one can repeat exactly line by line the proof of Proposition $4.10$, \cite{ABZ} to derive the following result.
\begin{prop}\label{prop:sym}
Set
$$
\gamma=\sqrt{\ma \lambda},\quad q= \sqrt{\frac{\ma}{\lambda}},
$$
and set~$\theta_s=T_{q} \zeta_s$. Then it holds that
\begin{align}
&\partial_{t} W_s+T_{V}\cdot\partialx  W_s  + T_\gamma \theta_s = F_1,\label{systreduit1}
\\
&\partial_{t}\theta_s+T_{V}\cdot\partialx \theta_s - T_\gamma W_s =F_2,
\label{systreduit2}
\end{align}
for some source terms~$F_1,F_2$ satisfying
\[
\forall t\in [0, T],\quad \lA (F_1(t), F_2(t))\rA_{L^2\times L^2}\les o(1)_{\eps\to 0}+\lA K_{\eps}U(t)\rA_{Z^s}.
\]
\end{prop}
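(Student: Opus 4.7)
The strategy is to replay the symmetrization argument of Proposition~4.10 in \cite{ABZ} on top of the system \eqref{premiere}--\eqref{seconde}. The non-diagonal terms $T_\ma \zeta_s$ and $T_\lambda W_s$ are converted into $T_\gamma \theta_s$ and $T_\gamma W_s$ respectively through the multiplier $T_q$ with $q = \sqrt{\ma/\lambda} \in \Gamma^{-1/2}_{1/2+\rho_0}$ and $\gamma = \sqrt{\ma\lambda}\in \Gamma^{1/2}_{1/2+\rho_0}$. Concretely, apply $T_q$ to \eqref{seconde} and commute: one writes
\[
T_q(\partial_t+T_V\cdot\nabla)\zeta_s = (\partial_t+T_V\cdot\nabla)T_q \zeta_s + T_{\partial_t q}\zeta_s + [T_q,T_V\cdot\nabla]\zeta_s,
\]
and $T_q T_\lambda W_s = T_\gamma W_s + (T_q T_\lambda - T_{q\lambda})W_s$. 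Similarly, in \eqref{premiere} one substitutes $\zeta_s = T_{1/q}\theta_s + (I - T_{1/q}T_q)\zeta_s$ and writes $T_\ma T_{1/q} = T_\gamma + (T_\ma T_{1/q} - T_{\ma/q})$. The resulting system has exactly the form \eqref{systreduit1}--\eqref{systreduit2}, with $F_1,F_2$ comprised of $T_q f_2$, $f_1$, the commutators above, and the symbolic-calculus remainders just listed.

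The main task is to bound each contribution to $(F_1,F_2)$ in $L^2 \times L^2$ by $o(1)_{\eps\to 0} + \|K_\eps U\|_{Z^s}$. The source terms $f_1, f_2$ from Proposition \ref{prop:csystem2} already satisfy \eqref{p49:estf}, and since $T_q$ is of order $-1/2$, it maps the bound on $f_2$ in $H^{-1/2}$ to one in $L^2$. The symbolic-calculus remainders $T_q T_\lambda - T_{q\lambda}$ and $T_\ma T_{1/q} - T_{\ma/q}$ are of order $1/2 - (1/2+\rho_0) = -\rho_0$ by Theorem \ref{theo:sc}(ii), hence map $W_s,\theta_s \in L^2$ into $H^{\rho_0}$, giving an acceptable contribution bounded by $\lA K_\eps U\rA_{Z^s}$. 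The commutator $[T_q, T_V\cdot\nabla]$ is of order $-\rho_0$ by the same theorem and the quasilinearity is tamed as in the proof of \eqref{premiere}, i.e.\ the highest norm of $V$ enters only through a coefficient depending on $\mathcal{M}_s(T)$.

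The delicate step is the time-derivative term $T_{\partial_t q}\zeta_s$. Using the definitions of $\ma$ and $\lambda$ together with the equations \eqref{eq:B}--\eqref{eq:zeta} one expresses $\partial_t q$ in terms of $\eta,\psi,B,V$ and their spatial derivatives plus a Dirichlet-Neumann contribution; the regularity is exactly that needed so that $\partial_t q \in \Gamma^{-1/2}_{1/2+\rho_0}$ with seminorm bounded in terms of $\mathcal{M}_s(T)$, as was established in \cite{ABZ}. This gives $\lA T_{\partial_t q}\zeta_s\rA_{L^2}\les \lA \zeta_s\rA_{H^{-1/2}}\les \lA K_\eps^2 \eta\rA_{H^{s+\mez}}\les \lA K_\eps U\rA_{Z^s}$. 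Combining all the pieces yields \eqref{systreduit1}--\eqref{systreduit2} with the stated bound on $(F_1,F_2)$. The main obstacle is really bookkeeping: ensuring that every new error introduced by the symmetrization fits into the template $o(1)_{\eps\to 0} + \lA K_\eps U\rA_{Z^s}$, which is guaranteed because each gain of $\rho_0$ derivatives either comes with a factor $\eps^{\rho_0}$ (from Lemma \ref{Keps}(ii)) or from symbolic calculus on paradifferential symbols whose seminorms are controlled by $\mathcal{M}_s(T)$ independently of $n$.
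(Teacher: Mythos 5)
Your overall route — apply $T_q$ to \eqref{seconde}, substitute $\zeta_s = T_{1/q}\theta_s + (I - T_{1/q}T_q)\zeta_s$ into \eqref{premiere}, and control the symbolic-calculus remainders and commutators in $L^2$ — is exactly the route the paper intends: the author simply cites Proposition~4.10 of \cite{ABZ} and observes that, given Proposition~\ref{prop:csystem2}, its proof can be repeated word for word. Structurally, then, you are on the same track.

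There is, however, a genuine gap in the way you estimate the transport and time-derivative contributions. You split $T_q(\partial_t+T_V\cdot\nabla)\zeta_s - (\partial_t+T_V\cdot\nabla)T_q\zeta_s$ into $T_{\partial_t q}\zeta_s$ and $[T_q, T_V\cdot\nabla]\zeta_s$ and bound each piece separately. Neither piece actually lands in $L^2$ over the whole range $s>1+\frac{d}{2}$. For $T_{\partial_t q}\zeta_s$ you assert $\partial_t q\in\Gamma^{-\mez}_{\mez+\rho_0}$, but $\partial_t q$ involves $\partial_t\nabla\eta=\nabla G(\eta)\psi\in H^{s-\tdm}\hookrightarrow C_*^{s-\tdm-\frac{d}{2}}$, which is a \emph{negative} Zygmund index as soon as $s<\tdm+\frac{d}{2}$; then $T_{\partial_t q}$ does not map $H^{-\mez}$ into $L^2$. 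For the second piece, $[T_q, T_V\cdot\nabla]$ is indeed of order $-\rho_0$, but $\zeta_s$ lives only in $H^{-\mez}$, so $[T_q,T_V\cdot\nabla]\zeta_s\in H^{-\mez+\rho_0}$ which still misses $L^2$ whenever $\rho_0<\mez$.

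The resolution — which is what \cite{ABZ} do, and what this paper does itself for the analogous term $[T_\zeta,\partial_t+T_V\cdot\nabla]LB$ in its proof of \eqref{premiere} — is to keep the two contributions bundled as the single commutator $[T_q,\partial_t+T_V\cdot\nabla]\zeta_s$ and estimate it through a Lemma~2.15-of-\cite{ABZ}-type bound in which only the material derivative $D_tq=(\partial_t+V\cdot\nabla)q$ of the coefficient appears. That material derivative is genuinely smoother than $\partial_t q$ or $V\cdot\nabla q$ separately: by equation \eqref{eq:zeta}, $D_t\nabla\eta\in H^{s-1}\hookrightarrow L^\infty$, whereas $\partial_t\nabla\eta$ is merely $H^{s-\tdm}$. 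It is this extra half derivative of regularity on the coefficient that lets the commutator land in $L^2$. Make the grouping explicit; the remainder of your plan (the $T_qT_\lambda-T_{q\lambda}$ and $T_\ma T_{1/q}-T_{\ma/q}$ remainders, the action of $T_q$ on $f_2$) is sound as written.
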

Now, multiplying \eqref{systreduit1} with $W_s$ and \eqref{systreduit2} with $\theta_s$, using the remark that
\begin{align*}
&\lA (T_{V(t)}\cdot \partialx)^* + T_{V(t)}\cdot \partialx \rA_{L^2\rightarrow L^2}
\le C\lA V(t)\rA_{W^{1,\infty}},\\
&\lA T_{\gamma(t)} -(T_{\gamma(t)})^*\rA_{L^2\rightarrow L^2}
\le CM^{1/2}_{1/2}(\gamma(t)),
\end{align*}
one derives the~$L^2$ estimate for~$(W_s, \theta_s)$. More precisely,
\begin{prop}\label{L2est}
We have for all $t\in [0, T]$ that
\begin{equation}\label{L^2est}
\begin{aligned}
&\frac{d}{dt}\lA (W_s(t), \theta_s(t))\rA_{L^2\times L^2}^2\\
&\les \left( o(1)_{\eps\to 0}+\lA K_{\eps}U(t)\rA_{Z^s}+\lA (W_s(t), \theta_s(t))\rA_{L^2\times L^2}\right) \lA (W_s(t), \theta_s(t))\rA_{L^2\times L^2}.
\end{aligned}
\end{equation}
\end{prop}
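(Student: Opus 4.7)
The plan is to carry out a standard symmetrized $L^2$ energy estimate on the reduced system \eqref{systreduit1}--\eqref{systreduit2}. I would begin by taking the time derivative of $\lA (W_s, \theta_s)\rA_{L^2\times L^2}^2 = \langle W_s, W_s\rangle + \langle \theta_s,\theta_s\rangle$, pair \eqref{systreduit1} with $W_s$ and \eqref{systreduit2} with $\theta_s$ in the $L^2$ inner product, and add the two resulting identities. This yields
\[
\tfrac12\tfrac{d}{dt}\lA (W_s,\theta_s)\rA_{L^2\times L^2}^2 = -\mathcal{T} - \mathcal{C} + \langle F_1,W_s\rangle + \langle F_2,\theta_s\rangle,
\]
where $\mathcal{T}\defn \langle T_V\cdot\nabla W_s,W_s\rangle+\langle T_V\cdot\nabla \theta_s,\theta_s\rangle$ is the transport contribution and $\mathcal{C}\defn \langle T_\gamma \theta_s,W_s\rangle - \langle T_\gamma W_s,\theta_s\rangle$ is the cross term from the skew-symmetric coupling.

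For the transport term, I would use the identity $2\langle T_V\cdot\nabla u,u\rangle = \langle ((T_V\cdot\nabla)^* + T_V\cdot\nabla)u,u\rangle$ and invoke the bound on $\lA (T_V\cdot\nabla)^* + T_V\cdot\nabla \rA_{L^2\to L^2}$ quoted just before the statement, giving $|\mathcal{T}| \les \lA V\rA_{W^{1,\infty}} \lA (W_s,\theta_s)\rA_{L^2\times L^2}^2$, which is absorbed in the $\les$ constant depending on $\mathcal{M}_s(T)$. For the cross term, I would rewrite $\mathcal{C} = \langle (T_\gamma - (T_\gamma)^*) W_s,\theta_s\rangle$ using the self-adjointness of the duality pairing, and appeal to the bound $\lA T_\gamma - (T_\gamma)^* \rA_{L^2\to L^2}\le CM^{1/2}_{1/2}(\gamma)$; here $M^{1/2}_{1/2}(\gamma)$ is controlled by $\mathcal{F}(\mathcal{M}_s(T))$ since $\gamma=\sqrt{a\lambda}$ with $a\in C^{1/2+\rho_0}_*$ and $\lambda\in \Gamma^1_{1/2+\rho_0}$ as established in \cite{ABZ}. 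Hence $|\mathcal{C}|\les \lA (W_s,\theta_s)\rA_{L^2\times L^2}^2$.

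The source contribution is handled by Cauchy--Schwarz together with Proposition \ref{prop:sym}:
\[
|\langle F_1,W_s\rangle + \langle F_2,\theta_s\rangle| \le \lA (F_1,F_2)\rA_{L^2\times L^2} \lA (W_s,\theta_s)\rA_{L^2\times L^2} \les \big(o(1)_{\eps\to 0} + \lA K_\eps U(t)\rA_{Z^s}\big)\lA (W_s,\theta_s)\rA_{L^2\times L^2}.
\]
Combining the three estimates produces exactly \eqref{L^2est}. Since the symmetrization was done precisely to make these estimates routine, there is no real obstacle; the only mild point to verify is that the constants $\lA V\rA_{W^{1,\infty}}$ and $M^{1/2}_{1/2}(\gamma)$ are indeed controlled by $\mathcal{F}(\mathcal{M}_s(T))$, which follows from the Sobolev embedding $H^s\hookrightarrow W^{1,\infty}$ under $s>1+d/2$ and from the regularity of $a$ and $\lambda$ recalled from \cite{ABZ}. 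This completes the proof.
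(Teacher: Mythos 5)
Your proposal is correct and follows exactly the same route the paper sketches: multiply \eqref{systreduit1} by $W_s$ and \eqref{systreduit2} by $\theta_s$, absorb the transport term via the $L^2\to L^2$ bound on $(T_V\cdot\nabla)^*+T_V\cdot\nabla$, cancel the coupling up to the $T_\gamma-(T_\gamma)^*$ remainder bounded via $M^{1/2}_{1/2}(\gamma)$, and close with Cauchy--Schwarz and Proposition \ref{prop:sym} on the source terms. The only cosmetic difference is that you spell out the computation which the paper leaves as a one-line remark preceding the proposition; the sign in your rewriting of the cross term $\mathcal{C}$ is off (one gets $\langle((T_\gamma)^*-T_\gamma)W_s,\theta_s\rangle$), but since only $|\mathcal{C}|$ is used, this is harmless.
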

\subsection{From the new unknown to the original unknown}
Our purpose in this section is to prove the following estimate between the original unknown and the symmetrized unknown:
\begin{equation}\label{main2'}
\lA K^2_{\eps}U(t)\rA_{Z^s}\les o(1)_{\eps\to 0}+\lA (W_s(t), \theta_s(t))\rA_{L^2\times L^2},\quad \forall t\in [0, T].
\end{equation}
Recall that the functions~$W_s$ and~$\theta_s$ are obtained
from~$(\eta,V,\B)$ by
\[
W_s \defn K^2_\eps\lDx{s} V+T_\zeta K^2_\eps\lDx{s}\B,\quad \theta_s\defn T_qK^2_\eps\lDx{s}\nabla \eta=T_q\zeta_s.
\]
The fact that one can achieve \eqref{main2'} partly bases on the ellipticity of the symbol $q=\sqrt{\frac{a}{\lambda}}$, which in turn stems from the positivity of the Taylor coefficient $a$.
\begin{lemm}\label{eta}
There holds for all $t\in [0, T]$ that
\begin{equation}\label{ecBVBV}
\| K^2_{\eps}\eta(t) \|_{H^{s+\mez}}\les o(1)_{\eps\to 0}+\lA  \theta_s(t)\rA_{L^2}.
\end{equation}
\end{lemm}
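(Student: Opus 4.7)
\textbf{Plan for the proof of Lemma \ref{eta}.} The strategy is to invert the elliptic paradifferential operator $T_q$ by a first-order parametrix so as to recover $\zeta_s=K_\eps^2\lDx{s}\nabla\eta$ from $\theta_s=T_q\zeta_s$, and then to convert an $H^{-\mez}$ bound on $\zeta_s$ into an $H^{s+\mez}$ bound on $K_\eps^2\eta$.

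\emph{Step 1 (ellipticity of $q$).} Since $\lambda(x,\xi)\ge c_0\L{\xi}$ for $\la\xi\ra\ge\mez$ and $a\ge c/2>0$ by the Taylor sign condition (which is preserved on $[0,T]$), the symbol $p\defn 1/q=\sqrt{\lambda/a}$ is well defined. Using the embedding $H^{s-\mez}\hookrightarrow C^{1+\rho_0}_*$ (valid since $s>\tdm+\frac d2$) applied both to $a\in H^{s-\mez}$ and to the coefficients of $\lambda$ (which involve $\eta\in H^{s+\mez}$), both $p$ and $q$ lie in $\Gamma^{\pm\mez}_{1+\rho_0}(\xR^d)$ with semi-norms $\les 1$.

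\emph{Step 2 (parametrix).} By Theorem \ref{theo:sc} (ii) applied with $\rho=1$,
\[
T_pT_q=T_{pq}+R=T_1+R,\qquad R\text{ of order }\le -1.
\]
For $\eps>0$ small, the Fourier support of $\zeta_s$ lies in $\{\la\xi\ra\ge 1/(2\eps)\}$, on which the low-frequency cutoff $\psi$ used in the definition of paradifferential operators is identically $1$; hence $T_1\zeta_s=\zeta_s$. Applying $T_p$ to the relation $T_q\zeta_s=\theta_s$ therefore yields the identity
\[
\zeta_s=T_p\theta_s-R\zeta_s.
\]
Taking $H^{-\mez}$-norms and using that $T_p$ is of order $\mez$ and $R$ of order $-1$,
\[
\lA\zeta_s\rA_{H^{-\mez}}\les\lA\theta_s\rA_{L^2}+\lA\zeta_s\rA_{H^{-\frac 32}}.
\]

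\emph{Step 3 (small remainder and conversion back to $\eta$).} By Lemma \ref{Keps} (ii) (which by Remark \ref{important} applies to $K_\eps^2$ as well),
\[
\lA\zeta_s\rA_{H^{-\frac 32}}=\lA K_\eps^2\nabla\eta\rA_{H^{s-\frac 32}}\les\eps\lA\eta\rA_{H^{s+\mez}}=o(1)_{\eps\to 0}.
\]
Conversely, since $\widehat{K_\eps^2\eta}$ is supported in $\{\la\xi\ra\ge 1/(2\eps)\}$ where $\la\xi\ra\sim\L\xi$, we have
\[
\lA K_\eps^2\eta\rA_{H^{s+\mez}}\les \lA\nabla K_\eps^2\eta\rA_{H^{s-\mez}}=\lA\zeta_s\rA_{H^{-\mez}}.
\]
Combining these three displays yields $\lA K_\eps^2\eta\rA_{H^{s+\mez}}\les o(1)_{\eps\to 0}+\lA\theta_s\rA_{L^2}$, which is the claim.

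\emph{Main obstacle.} The only nontrivial point is the construction of the parametrix: one needs $p,q\in\Gamma^{\pm\mez}_{1+\rho_0}$ with $1+\rho_0>1$, so that Theorem \ref{theo:sc} (ii) with $\rho=1$ produces an $R$ of order $-1$ strictly—the ensuing gain of one derivative is what converts $\lA\zeta_s\rA_{H^{-\frac 32}}$ into $o(1)_{\eps\to 0}$ via Lemma \ref{Keps} (ii). This is where the standing assumption $s>\tdm+\frac d2$ from \eqref{fix:index} is essential. Beyond this, the argument is structural and does not rely on any further fine analysis of the solution.
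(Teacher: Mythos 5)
Your proof takes the same route as the paper: invert $T_q$ by a paradifferential parametrix, use the high-frequency support of $\zeta_s$ to identify $T_1\zeta_s=\zeta_s$, bound the remainder via the $\eps^\kappa$ gain from Lemma \ref{Keps} $(ii)$, and finally convert the $H^{-\mez}$ bound on $\zeta_s$ into the $H^{s+\mez}$ bound on $K_\eps^2\eta$. That last conversion step is fine, and so is the overall structure.

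The one real issue is the regularity you claim for $q$ in Step 1. You assert $p,q\in\Gamma^{\pm\mez}_{1+\rho_0}$, which needs $a,\nabla\eta\in C^{1+\rho_0}_*$ and hence the embedding $H^{s-\mez}\hookrightarrow C^{1+\rho_0}_*$, i.e.\ $s>\frac{3}{2}+\frac{d}{2}+\rho_0$. But \eqref{fix:index} fixes $s>1+\frac{d}{2}$ (not $s>\frac{3}{2}+\frac d2$ as you cite), and the main theorem is stated at that threshold; the fact that you need $\rho_0<s-\frac{3}{2}-\frac{d}{2}$ is also not guaranteed by \eqref{rhoi}. The paper instead uses the weaker but sufficient claim $q\in\Gamma^{-\mez}_{\mez+\rho_0}$, $q^{-1}\in\Gamma^{\mez}_{\mez+\rho_0}$, which only requires $s-\mez-\frac{d}{2}>\mez+\rho_0$, i.e.\ $s>1+\frac{d}{2}+\rho_0$, consistent with the standing hypothesis. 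The corresponding remainder $R$ is then of order $-\mez-\rho_0$ rather than $-1$, and the smallness you need still follows, since
\[
\lA R\zeta_s\rA_{H^{-\mez}}\les\lA\zeta_s\rA_{H^{-1-\rho_0}}=\lA K_\eps^2\nabla\eta\rA_{H^{s-1-\rho_0}}\les\eps^{\mez+\rho_0}\lA\nabla\eta\rA_{H^{s-\mez}}=o(1)_{\eps\to 0}.
\]
So the fix is simply to replace $\Gamma^{\pm\mez}_{1+\rho_0}$ and the order-$(-1)$ remainder by $\Gamma^{\pm\mez}_{\mez+\rho_0}$ and the order-$(-\mez-\rho_0)$ remainder; after that your argument coincides with the paper's proof. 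As written, it only works under a strictly stronger hypothesis on $s$ than the theorem assumes.
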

\begin{proof}
Since $q\in \Gamma^{-\mez}_{\mez+\rho_0}$ and $\frac{1}{q}\in \Gamma^{\mez}_{\mez+\rho_0}$, by Theorem \ref{theo:sc} we have
\[
\zeta_s=T_{q^{-1}}T_q\zeta_s+R\zeta_s=T_{q^{-1}}\theta_s+R\zeta_s
\]
with $R$ is of order $-\mez-\rho_0$. Hence $\| T_{q^{-1}}\theta_s\|_{H^{-\mez}}\les \| \theta_s\|_{L^2}$ and
\[
\lA R\zeta_s\rA_{H^{-\mez}}\le \lA \zeta_s\rA_{H^{-1-\rho_0}}=\lA K^2_\eps \nabla\eta\rA_{H^{s-1-\rho_0}}\les o(1)_{\eps\to 0},
\]
Consequently,
\[
\lA K^2_\eps \nabla \eta\rA_{H^{s-\mez}}\les o(1)_{\eps\to 0}+\lA  \theta_s(t)\rA_{L^2},
\]
which combining with the fact that $\| K^2_\eps \eta\|_{H^{s-1/2}}\les o(1)_{\eps\to 0}$ concludes the proof.
\end{proof}
\begin{lemm}\label{BV}
There holds for all $t\in [0, T]$ that
\begin{equation}\label{ecBVBV}
\| K^2_\eps B(t) \|_{H^s}+\| K^2_\eps V(t) \|_{H^s}\les o(1)_{\eps\to 0}+\lA W_s(t)\rA_{L^2}.
\end{equation}
\end{lemm}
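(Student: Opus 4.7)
My approach is to treat the two quantities $\|K_\eps^2 V\|_{H^s}$ and $\|K_\eps^2 B\|_{H^s}$ separately but interlocked, and close by absorption.

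First I would exploit the definition of $W_s$ directly. Rearranging
\[
\langle D_x\rangle^s K_\eps^2 V = W_s - T_\zeta \langle D_x\rangle^s K_\eps^2 B
\]
and using that $\zeta=\nabla\eta\in L^\infty$ with norm controlled by $\mathcal{M}_s(T)$, so that $T_\zeta$ is bounded on $L^2$, gives
\[
\|K_\eps^2 V\|_{H^s} \le \|W_s\|_{L^2} + C\,\|K_\eps^2 B\|_{H^s}.
\]
Thus the entire lemma reduces to controlling $\|K_\eps^2 B\|_{H^s}$ by $o(1)_{\eps\to 0}+\|W_s\|_{L^2}$ plus a term in $\|K_\eps^2 V\|_{H^s}$ (or $\|K_\eps^2 B\|_{H^s}$) that can be absorbed.

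For $\|K_\eps^2 B\|_{H^s}$, I would use the physical identity $B = G(\eta)\psi + \nabla\eta\cdot V$ together with paralinearization. Writing $G(\eta)\psi = T_\lambda\psi + R(\eta)\psi$ and expanding $\nabla\eta\cdot V$ via Bony, then applying $\langle D_x\rangle^s K_\eps^2$ and commuting with all paraproducts via Lemma \ref{lemma:K2}, with the remainder $K_\eps^2 R(\eta)\psi$ handled by Proposition \ref{DNest} (valid for $K_\eps^2$ by Remark \ref{important}), yields an identity of the form
\[
\langle D_x\rangle^s K_\eps^2 B = T_\zeta \langle D_x\rangle^s K_\eps^2 V + T_\lambda \langle D_x\rangle^s K_\eps^2 \psi + r_\eps,\quad \|r_\eps\|_{L^2}\lesssim o(1)_{\eps\to 0} + \|K_\eps U\|_{Z^s}.
\]
To eliminate the $\psi$-contribution, I would use the companion identity $\nabla\psi = V + B\nabla\eta$. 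Paralinearizing and applying $\langle D_x\rangle^s K_\eps^2$ as in the derivation of $W_s$ in Section \ref{sec:para}, one obtains $\langle D_x\rangle^s K_\eps^2 \nabla\psi = W_s + \tilde r_\eps$ with $\tilde r_\eps$ again small (same commutator arguments). Inverting the elliptic symbol $\lambda$ of order one then converts the $T_\lambda$-term into a quantity bounded by $\|W_s\|_{L^2}$ plus $o(1)_{\eps\to 0}$.

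Combining the two displayed bounds for $V$ and $B$ gives a coupled system for $(\|K_\eps^2 V\|_{H^s}, \|K_\eps^2 B\|_{H^s})$ whose "matrix" is, at the principal level, $I + T_\zeta^2$; since $1+|\nabla\eta|^2$ is bounded away from zero, this principal part is elliptic and can be inverted, after which the lower-order coupling is absorbed into the left-hand side, yielding the claimed estimate. In particular, the power $\eps^\kappa$ extracted by Lemma \ref{lemma:K2} in the commutator terms is what makes the absorption possible uniformly.

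The main obstacle is the presence of paraproducts such as $T_B\zeta$ and $T_V\zeta$ which (since $\zeta\in H^{s-\mez}$ only) lose half a derivative and thus lie in $H^{s-\mez}$ instead of $H^s$. As in the proofs of Lemmas \ref{eta} and \ref{Kepsv}, these must be treated by commuting with $K_\eps^2\langle D_x\rangle^s$ via Lemma \ref{lemma:K2} to gain a factor $\eps^\rho$ that compensates the regularity loss. A delicate accounting is needed to ensure that after this gain, the remaining "$K_\eps^2$-level" quantities can either be bounded by $\|W_s\|_{L^2}$ (through the $\nabla\psi$ identity above) or by $\|K_\eps^2 U\|_{Z^s}$ with a coefficient small enough to be absorbed on the left; this is where the interplay between the elliptic symbol $\lambda$ and the cut-off $K_\eps^2$ is exploited most tightly.
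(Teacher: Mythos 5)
Your opening reduction $\|K_\eps^2 V\|_{H^s}\le\|W_s\|_{L^2}+C\|K_\eps^2 B\|_{H^s}$ agrees with the paper's final step, but the core reduction of $\|K_\eps^2 B\|_{H^s}$ to $\|W_s\|_{L^2}$ does not go through along the route you propose, and the gap is genuine. If you paralinearize $B=G(\eta)\psi+\nabla\eta\cdot V$ and hit it with $\langle D_x\rangle^s K_\eps^2$, the pieces $T_\lambda\langle D_x\rangle^s K_\eps^2\psi$ (an operator of order $s+1$ applied to $\psi\in H^{s+\mez}$), $\langle D_x\rangle^s K_\eps^2 R(\eta)\psi$ (Proposition~\ref{DNest} and \eqref{estR:ABZ} control $K_\eps^2 R(\eta)\psi$ only in $H^{s-\mez}$, not $H^s$), and $T_V\zeta_s$ (with $\zeta_s\in H^{-\mez}$) all land in $H^{-\mez}$ rather than $L^2$, with size $\|K_\eps^2\psi\|_{H^{s+\mez}}$ or $\|K_\eps^2\eta\|_{H^{s+\mez}}$. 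Your displayed claim $\|r_\eps\|_{L^2}\lesssim o(1)_{\eps\to 0}+\|K_\eps U\|_{Z^s}$ is therefore false; the direct paralinearization loses half a derivative. The companion identity fails for the same reason: $\nabla\psi=V+B\nabla\eta$ contains $T_B\nabla\eta$, so after $\langle D_x\rangle^s K_\eps^2$ one picks up $T_B\zeta_s\in H^{-\mez}$ of size $\|K_\eps^2\eta\|_{H^{s+\mez}}$, and hence $\langle D_x\rangle^s K_\eps^2\nabla\psi\neq W_s+(\text{small in } L^2)$. Finally, the absorption you rely on is not available: Lemmas~\ref{Keps} and~\ref{lemma:K2} produce a factor $\eps^\theta$ only when paired with weaker, non-$K_\eps$ norms (yielding $o(1)_{\eps\to 0}$); they never give an $\eps^\theta\|K_\eps^2 U\|_{Z^s}$-type term that could be absorbed.

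The paper sidesteps this by invoking the prepared identity from Lemma 4.15 of \cite{ABZ}, $B=T_{1/e}\cnx W-T_{1/e}\gamma'+SB$ with $e=-\lambda+i\zeta\cdot\xi$, $W=V+T_\zeta B$, $\|\gamma'\|_{H^{s-\mez}}\lesssim 1$, and $S$ of order $-\mez$. This is precisely the result of combining your two physical identities in paralinearized form, where the algebraic cancellation $(1+|\zeta|^2)|\xi|^2/(\lambda+i\zeta\cdot\xi)=\lambda-i\zeta\cdot\xi$ eliminates the half-derivative loss you encounter. Because $T_{1/e}$ has order $-1$, the term $T_{1/e}\cnx W$ maps $H^s\to H^s$, and $T_{1/e}\gamma'$ and $SB$ land in $H^{s+\mez}$, so after applying $K_\eps^2$ and Lemma~\ref{Keps}~(ii)--(iii) everything is controlled by $o(1)_{\eps\to 0}+\|K_\eps^2 W\|_{H^s}$, and the final commutator $[K_\eps^2\langle D_x\rangle^s,T_\zeta]B$ is handled by Lemma~\ref{lemma:K2}~(ii). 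To make your plan rigorous you would have to re-derive this cancellation, at which point you would effectively be reproving Lemma 4.15 of \cite{ABZ}; citing it directly is the efficient route the paper takes.
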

\begin{proof}
Set $W=V+T_{\zeta}B$. From the proof of Lemma 4.15, \cite{ABZ} we have
$$
B=T_{\frac{1}{e}}  \cn W -T_{\frac{1}{e}} \gamma'+SB
$$
where
\begin{equation}\label{defi:S}
S\defn T_{\frac{1}{e}} \bigl( -T_{\cn \zeta}-R(\eta)\bigr)+\bigl(I-T_{\frac{1}{e}}T_e\bigr), \quad e\defn -\lambda+i\zeta.\xi
\end{equation}
and $\|\gamma'\|_{H^{s-\mez}}\les 1$.
It is easy to see that  $\| K^2_{\eps}T_{\frac{1}{e}}\gamma'\|_{H^s}= o(1)_{\eps\to 0}$  and (thanks to Lemma \ref{Keps} $(iii)$)
\[
\lA K^2_{\eps}T_{\frac{1}{e}}  \cn W\rA_{H^s}\les o(1)_{\eps\to 0} +\lA K^2_{\eps}W(t)\rA_{H^s}.
\]
On the other hand, since $\cn\zeta\in C^{-\mez}_*$ we can apply Theorem \ref{pproduct} $(iii)$ to have
\[
\lA K^2_{\eps}T_{\frac{1}{e}}T_{\cn \zeta}B\rA_{H^s}\le \eps^{\mez}\lA T_{\frac{1}{e}}T_{\cn \zeta}B\rA_{H^{s+\mez}}\les \eps^{\mez}\lA T_{\cn \zeta}B\rA_{H^{s-\mez}}\les \eps^{\mez}\lA B\rA_{H^{s}}\les o(1)_{\eps\to 0}.
\]
Similarly, because $R(\eta)B\in H^{s-\mez}$ (by \eqref{estR:ABZ}) we also have
\[
\lA K^2_{\eps}T_{\frac{1}{e}}R(\eta)B\rA_{H^s}\les o(1)_{\eps\to 0}.
\]
Finally, $\bigl(I-T_{\frac{1}{e}}T_e\bigr)$ is of order $-1+1-\mez=-\mez$, hence
\[
\lA K^2_{\eps}(I-T_{\frac{1}{e}}T_q)B\rA_{H^s}\les o(1)_{\eps\to 0}.
\]
Putting all together we deduce
\[
\lA K^2_{\eps}B\rA_{H^s}\les o(1)_{\eps\to 0} +\lA K^2_{\eps}W(t)\rA_{H^s}.
\]
On the other hand, $K_\eps^2\langle D_x\rangle^sW=W_s+[K_\eps^2\langle D_x\rangle^s,T_\zeta]B$ and it follows from Lemma \ref{lemma:K2} $(ii)$ that
\[
\lA [K_\eps^2\langle D_x\rangle^s,T_\zeta]B\rA_{L^2}\les o(1)_{\eps\to 0}.
\]
Thus, we have proved that
\[
\lA K^2_{\eps}B\rA_{H^s}\les o(1)_{\eps\to 0} +\lA W_s(t)\rA_{L^2}.
\]
The desired estimate for  $K^2_{\eps}V$ in $H^s$ then follows by writing
\[
K^2_{\eps}V=K^2_{\eps}W-[K^2_{\eps},T_{\zeta}]B -T_{\zeta}K^2_{\eps}B.
\]
\end{proof}
Finally, we prove the estimate for $K^2_{\eps}\psi$ in the following lemma.
\begin{lemm}\label{psi}
\begin{equation}\label{psif}
\| K^2_{\eps}\psi(t) \|_{H^{s+\mez}}\les o(1)_{\eps\to 0}+\lA \theta_s(t)\rA_{L^2}.
\end{equation}
\end{lemm}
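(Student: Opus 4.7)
The strategy is to bypass the trace $\psi$ via its gradient. On frequencies $|\xi|\gtrsim 1/\eps$ we have the equivalence (up to constants) $\|K^2_\eps \psi\|_{H^{s+\mez}} \les \|K^2_\eps \psi\|_{L^2} + \|K^2_\eps \nabla_x\psi\|_{H^{s-\mez}}$, and Lemma~\ref{Keps}~(ii) gives $\|K^2_\eps \psi\|_{L^2} \les \eps^{s+\mez}\|\psi\|_{H^{s+\mez}} = o(1)_{\eps\to 0}$. So it suffices to control $\|K^2_\eps \nabla_x\psi\|_{H^{s-\mez}}$. For this, I would invoke the definition \eqref{def:BV}, which yields $\nabla_x \psi = V + B\zeta$ with $\zeta = \nabla_x\eta$. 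The crucial observation is that $\nabla\psi$ is only needed at regularity $s-\mez$, which is \emph{one half derivative below} the $H^s$ regularity of $V$ and $B$. In particular, Lemma~\ref{Keps}~(ii) gives $\|K^2_\eps V\|_{H^{s-\mez}} \les \eps^{\mez}\|V\|_{H^s} = o(1)_{\eps\to 0}$ directly, so no contribution from $W_s$ is required.

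For the product $B\zeta$ I would apply Bony's decomposition $B\zeta = T_B\zeta + T_\zeta B + R(B,\zeta)$ and treat each piece in $H^{s-\mez}$. For the remainder, Theorem~\ref{pproduct}~(i) places $R(B,\zeta)$ in $H^{2s-\mez-d/2}$, which is strictly above $H^{s-\mez}$ since $s>d/2$, so Lemma~\ref{Keps}~(ii) delivers an $\eps$-power gain, giving $o(1)_{\eps\to 0}$. For $T_\zeta B$ I split $K^2_\eps T_\zeta B = [K^2_\eps, T_\zeta]B + T_\zeta K^2_\eps B$; Lemma~\ref{Keps}~(iii) handles the commutator, while the paraproduct bound combined with Lemma~\ref{Keps}~(ii) applied to $B\in H^s$ handles the second term, both producing $o(1)_{\eps\to 0}$.

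The only term forcing a genuine appearance of the symmetrized unknown is $T_B\zeta$. Splitting $K^2_\eps T_B\zeta = [K^2_\eps, T_B]\zeta + T_B K^2_\eps\zeta$, the commutator is again $o(1)_{\eps\to 0}$ by Lemma~\ref{Keps}~(iii), and the paraproduct estimate gives $\|T_B K^2_\eps\zeta\|_{H^{s-\mez}}\les \|B\|_{L^\infty}\|K^2_\eps\nabla\eta\|_{H^{s-\mez}}\les \|K^2_\eps\eta\|_{H^{s+\mez}}$, which is bounded by $o(1)_{\eps\to 0} + \|\theta_s\|_{L^2}$ thanks to Lemma~\ref{eta}. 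Combining these estimates gives the claim. The main (modest) obstacle is bookkeeping: verifying that each commutator with $K^2_\eps$ truly produces an $\eps^{\theta}$ factor rather than a merely uniform bound, which forces one to check that every factor or symbol involved has regularity strictly above the relevant threshold --- this is guaranteed by the choice \eqref{rhoi} of $\rho_0$ together with the embeddings into $C^{\rho_0}_*$ coming from $s>1+\frac{d}{2}$.
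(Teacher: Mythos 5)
Your proof is correct and follows essentially the same route as the paper: reduce to $\|K^2_\eps\nabla\psi\|_{H^{s-\mez}}$ (the $L^2$ piece being $o(1)$ by Lemma~\ref{Keps}~(ii)), substitute $\nabla\psi=V+B\nabla\eta$, apply Bony's decomposition to the product, dispose of $K^2_\eps V$, $K^2_\eps T_{\nabla\eta}B$ and $K^2_\eps R(B,\nabla\eta)$ via the $\eps$-gain of Lemma~\ref{Keps}~(ii), and bound the only surviving term $K^2_\eps T_B\nabla\eta$ by $\|K^2_\eps\nabla\eta\|_{H^{s-\mez}}$ and then Lemma~\ref{eta}. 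The only (immaterial) difference is that for $T_\zeta B$ you introduce a commutator $[K^2_\eps,T_\zeta]B$ before estimating, whereas the paper simply notes $T_{\nabla\eta}B\in H^s$ directly and applies Lemma~\ref{Keps}~(ii) once; both are valid.
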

\begin{proof}
Using the identity~$\nabla\psi=V+\B \nabla \eta$ we can write
\begin{equation}\label{psi1}
K^2_{\eps}\nabla\psi=K^2_{\eps}V+K^2_{\eps}\B \nabla \eta=K^2_{\eps}V+K^2_{\eps}T_B \nabla \eta+K^2_{\eps}T_{\nabla \eta}B+K^2_{\eps}R(B, \nabla \eta).
\end{equation}
Since $B, V\in H^{s}$ and $\nabla \eta\in H^{s-\mez}\subset L^{\infty}$ we have by Lemma \ref{Keps} $(ii)$ that
\[
\lA K^2_{\eps}V\rA_{H^{s-\mez}}+\lA K^2_{\eps}T_{\nabla \eta}B\rA_{H^{s-\mez}}+\lA K^2_{\eps}R(B, \nabla \eta)\rA_{H^{s-\mez}}\les o(1)_{\eps\to 0}.
\]
On the other hand, by Lemma \ref{eta}
\[
\lA K^2_{\eps}T_B \nabla \eta\rA_{H^{s-\mez}}\les o(1)_{\eps\to 0}+\lA K^2_{\eps}\nabla \eta\rA_{H^{s-\mez}}\les o(1)_{\eps\to 0}+\lA \theta_s(t)\rA_{L^2}.
\]
Thus,
\[
\lA K^2_{\eps}\nabla\psi\rA_{H^{s-\mez}}\les o(1)_{\eps\to 0}+\lA \theta_s(t)\rA_{L^2}
\]
and this implies \eqref{psif}.
\end{proof}
In conclusion, combining Lemmas \ref{eta}, \ref{BV} and \ref{psi} we obtain the estimate \eqref{main2'}.
\subsection{Concluding the proof}\label{section:conclude}
First, it is easy to see by virtue of Theorem \ref{theo:sc} $(i)$ that
\begin{equation}\label{cl2}
\lA (W_s(t),\theta_s(t))\rA_{L^2\times L^2}\les \lA K^2_{\eps}U(t)\rA_{Z^s},\quad \forall t\in [0, T].
\end{equation}
Then the $L^2$ estimate in Proposition \ref{L2est} implies
\[
\lA (W_s(t), \theta_s(t))\rA_{L^2\times L^2}\les o(1)_{\eps\to 0}+\lA K^2_{\eps}U(0)\rA_{Z^s}+\int_0^t\lA K_{\eps}U(\tau)\rA_{Z^s}d\tau.
\]
From \eqref{cl2} and the relation \eqref{main2'} proved in the previous section, we deduce that
\begin{equation}\label{cl3}
\lA K^2_{\eps}U(t)\rA_{Z^s}\les o(1)_{\eps\to 0}+\lA K^2_{\eps}U(0)\rA_{Z^s}+\int_0^t\lA K_{\eps}U(\tau)\rA_{Z^s}d\tau.
\end{equation}
Now, using the obvious inequality
\[
k_{\eps}=k_{\eps}-k_{\eps}^2+k_{\eps}^2=k_{\eps}(1-k_{\eps})+k_{\eps}^2=k_{\eps}\jmath_{\eps}+k_{\eps}^2\les k_{\eps}^2+\jmath_{\eps}^2
\]
we have
\[
\lA K_{\eps}(U_n(\tau)-U_0(\tau))\rA_{Z^s}\les \lA K^2_{\eps}(U_n(\tau)-U_0(\tau))\rA_{Z^s}+\lA J^2_{\eps}(U_n(\tau)-U_0(\tau)))\rA_{Z^s}.
\]
Hence in view of \eqref{cl3} with $U=U_n$, it holds that
\begin{align*}
\lA K^2_{\eps}U_n(t)\rA_{Z^s}&\les o(1)_{\eps\to 0}+\lA K^2_{\eps}U_n(0)\rA_{Z^s}+\int_0^t\lA K_{\eps}U_0(\tau)\rA_{Z^s}d\tau+\int_0^t\lA K^2_{\eps}U_n(\tau)\rA_{Z^s}d\tau\\
&+\int_0^t\lA K^2_{\eps}U_0(\tau)\rA_{Z^s}d\tau+\int_0^t\lA J^2_{\eps}(U_n(\tau)-U_0(\tau))\rA_{Z^s}d\tau,
\end{align*}
where we have used the fact that $\mathcal{M}^{(n)}_s(T)$ is bounded in $n$. The Gronwall inequality then gives us
\begin{align}\label{main:est}
\lA K^2_{\eps}U_n\rA_{C([0, T], Z^s)}&\les o(1)_{\eps\to 0}+\lA K^2_{\eps}U_n(0)\rA_{Z^s}+\int_0^T\lA K_{\eps}U_0(\tau)\rA_{Z^s}d\tau+\\\nonumber
&\quad+\int_0^T\lA K^2_{\eps}U_0(\tau)\rA_{Z^s}d\tau+\int_0^T\lA J^2_{\eps}(U_n(\tau)-U_0(\tau))\rA_{Z^s}d\tau.
\end{align}
Consequently,
\begin{multline*}
\lA K^2_{\eps}U_n\rA_{C^0([0, T], Z^s)}\les  o(1)_{\eps\to 0}+\lA K^2_{\eps}U_n(0)\rA_{Z^s}+\lA K_{\eps}U_0\rA_{C^0([0, T], Z^s)}\\
+\lA J^2_{\eps}(U_n-U_0)\rA_{C^0([0, T], Z^s)}.
\end{multline*}
Then in views of the estimates \eqref{est:main}, \eqref{eq.lipschitz}  we deduce
\begin{align}\label{est:final}
&\lA U_n-U_0\rA_{C^0([0, T], Z^s)}\\\nonumber
&\les \frac{C}{\eps}\lA (U_n-U_0)\mid_{t=0}\rA_{Z^{s-1}}+\lA K^2_{\eps}U_0\rA_{C^0([0, T], Z^s)}+o(1)_{\eps\to 0}+\lA K^2_{\eps}U_n(0)\rA_{Z^s}+\\\nonumber
&\quad+\lA K_{\eps}U_0\rA_{C^0([0, T], Z^s)}+\lA J^2_{\eps}(U_n-U_0)\rA_{C^0([0, T], Z^s)}\\\nonumber
&\les \frac{C}{\eps}\lA (U_n-U_0)\mid_{t=0}\rA_{Z^{s-1}}+\lA K_{\eps}U_0\rA_{C^0([0, T], Z^s)}+\lA K^2_{\eps}U_n(0)\rA_{Z^s}+o(1)_{\eps\to 0}.
\end{align}
Now, fix $\eps$ and let $n\to \infty$ in \eqref{est:final} and take into account the fact that  $U_n(0)$ converges to $U_0(0)$ in $Z^s$, we have
\begin{align*}
\limsup_{n\to\infty}\lA U_n-U_0\rA_{C([0, T], Z^s)}&\les  \lA K_{\eps}U_0\rA_{C^0([0, T], Z^s)}+o(1)_{\eps\to 0}.
\end{align*}
Notice that since $U_0\in C([0, T], Z^s)$, the set $\{U_0(t):t\in [0, T]\}$ is compact in $Z^s$ and thus it follows from Lemma \ref{Keps} $(i)$ that
\[
\lim_{\eps\to 0}\lA K_{\eps}U_0\rA_{C^0([0, T], Z^s)}=0.
\]
Therefore, letting $\eps\to 0$ we end up with
\[
\lim_{n\to\infty}\lA U_n-U_0\rA_{C^0([0, T], Z^s)}=0
\]
which means that the solution map is continuous in the strong topology of $Z^s$ and thus the proof of Theorem \ref{maintheo} is complete.
\section*{Acknowledgment}
This work was partially supported by the labex LMH through the grant no ANR-11-LABX-0056-LMH in the "Programme des Investissements d'Avenir"  and by Agence Nationale de la Recherche project  ANA\'E ANR-13-BS01-0010-03. I would like to sincerely thank Prof. Nicolas Burq for many fruitful discussions and constant encouragement during this work. I thank the referee for useful comments that helped improve the presentation of the manuscript.

\end{document}